\date{}
\title[Approximate null-controllability for the Ornstein-Uhlenbeck equations]{Approximate null-controllability with uniform cost for the hypoelliptic Ornstein-Uhlenbeck equations}
\author{Paul Alphonse}
\address{(Paul Alphonse) Université de Lyon, ENSL, UMPA - UMR 5669, F-69364 Lyon}
\email{paul.alphonse@ens-lyon.fr}
\author{Jérémy Martin}
\address{(Jérémy Martin) Univ Rennes, CNRS, IRMAR - UMR 6625, F-35000 Rennes}
\email{jeremy.martin@univ-rennes1.fr}
\keywords{approximate null-controllability; integral thickness condition; unique continuation property; Ornstein-Uhlenbeck equations; hypoellipticity}
\subjclass[2020]{93B05, 35H10, 47D06, 42B37}
\numberwithin{equation}{section}
\newtheorem{thm}{Theorem}[section]
\newtheorem{prop}[thm]{Proposition}
\newtheorem{lem}[thm]{Lemma}
\newtheorem{cor}[thm]{Corollary}
\theoremstyle{definition}
\newtheorem{ex}[thm]{Example}
\newtheorem{rk}[thm]{Remark}
\DeclareMathOperator{\Tr}{Tr}
\DeclareMathOperator{\Leb}{Leb}
\DeclareMathOperator{\Rank}{Rank}
\DeclareMathOperator{\Ker}{Ker}
\begin{document}

\sloppy

\selectlanguage{english}

\begin{abstract} We prove that the approximate null-controllability with uniform cost of the hypoelliptic Ornstein-Uhlenbeck equations posed on $\mathbb R^n$ is characterized by an integral thickness geometric condition on the control supports. We also provide associated quantitative weak observability estimates. This result for the hypoelliptic Ornstein-Uhlenbeck equations is deduced from the same study for a large class of non-autonomous elliptic equations from moving control supports. We generalize in particular results known for parabolic equations posed on $\mathbb R^n$, for which the approximate null-controllability with uniform cost is ensured by the notion of thickness, which is stronger than the integral thickness condition considered in the present work. Examples of those parabolic equations are the fractional heat equations associated with the operator $(-\Delta)^s$, in the regime $s\geq1/2$. Our strategy also allows to characterize the approximate null-controllability with uniform cost from moving control supports for this class of fractional heat equations.
\end{abstract}

\maketitle

\section{Introduction}

The study of the (rapid) stabilization and the (approximate) null-controllability of parabolic equations \cite{AM, MR3816981, HWW, Ko, L, NTTV, WZ} or degenerate parabolic equations of hypoelliptic type \cite{AB, BEPS, BJKPS, MR3680980, DSV} posed on $\mathbb R^n$ and taking the following form
\begin{equation}\label{25052018E3}\tag{$E_P$}
\left\{\begin{array}{l}
	\partial_tf(t,x)+Pf(t,x) = h(t,x)\mathbbm1_{\omega}(x),\quad (t,x)\in(0,+\infty)\times\mathbb R^n,\\[5pt]
	f(0,\cdot) = f_0\in L^2(\mathbb R^n),
\end{array}\right.
\end{equation}
has been much addressed recently. The purpose of this line of research is to provide \textit{geometric} characterizations for the control support $\omega\subset\mathbb R^n$ that ensure the above notions for the equations \eqref{25052018E3}. At the present time, the stabilization and the null-controllability properties are well-understood for a large class of parabolic equations posed on $\mathbb R^n$, as we will detail just after. The case is similar for the parabolic equations posed on bounded domains, as for the heat equation whose null-controllability properties are known for decades \cite{MR1312710} and whose stabilization properties have been recently investigated \cite{ Xiang}. However, the situation is different for the hypoelliptic equations of the form \eqref{25052018E3}, whose study is only at an early stage. For this class of equations, we currently do not have any necessary and sufficient geometric characterization on $\omega\subset\mathbb R^n$ that ensures their null-controllability, even on particular examples. The hypoelliptic equations posed on bounded domains or on manifolds are also widely studied, and the situation is quite different for them. Although these equations have not been studied in a general setting, some particular examples as the Grushin equation, the Kolmogorov equation or the heat equation on the Heisenberg group are now quite well-understood \cite{zbMATH06680964, zbMATH07206863, zbMATH07352607, LL}.

In this work, we study the \textit{cost-uniformly approximate null-controllability} properties of the equation \eqref{25052018E3} associated with the following \textit{hypoelliptic} Ornstein-Uhlenbeck operator
\begin{equation}\label{12052021E1}
	P = QD_x\cdot D_x + Bx\cdot\nabla_x,\quad x\in\mathbb R^n,
\end{equation}
where $B$ and $Q$ are $n\times n$ real matrices, $Q$ being moreover symmetric positive semidefinite. Let us recall that the hypoellipticity of the operator $P$ is characterized by a simple algebraic condition on the matrices $B$ and $Q$ known as the Kalman rank condition \eqref{05072021E3} presented shortly after. Precisely, we prove that for all positive time $T>0$, the evolution equation \eqref{25052018E3} is cost-uniformly approximately null-controllable from the control support $\omega$ in time $T$ \textit{if and only if} there exist a radius $r>0$ and a rate $\gamma\in(0,1]$ such that
\begin{equation}\label{20122021E1}
	\forall x\in\mathbb R^n,\quad\frac1T\int_0^T\Leb\big((e^{tB}\omega)\cap B(x,r)\big)\, \mathrm dt\geq\gamma V_r,
\end{equation}
where $V_r$ stands for the volume of a Euclidean ball of radius $r$ in $\mathbb R^n$. This above geometric condition will be called \textit{integral thickness condition}, since it generalizes the notion of thickness \eqref{07122021E4} which corresponds to the case where the matrix $B$ is zero, that is, to the elliptic case. This notion of thickness has turned out to play a key role in the theories of stabilization and (approximate) null-controllability, since it was proven to be a necessary and sufficient geometric condition that ensures these notions for large classes of parabolic equations posed on $\mathbb R^n$, as the fractional heat equations for instance, see e.g. \cite{AB, AM, MR3816981, HWW, LWXY, MR4041279, WZ}. The thickness condition is also involved in the study of the exact controllability of the free Schr\"odinger equation, as highlighted in \cite{HWWII, MPS2}. Moreover, the geometric condition \eqref{20122021E1} has already been introduced in \cite{BEPS}, where the authors proved that this is a necessary condition for the exact null-controllability of the hypoelliptic Ornstein-Uhlenbeck equation \eqref{25052018E3}. They actually consider a quite more general class of equations, which will be presented later. As a consequence of their work, we know that many hypoelliptic equations, as the Kolmogorov equation or the Kolmogorov equation with a quadratic external force, require a \textit{minimal time} to be possibly exactly null-controllable from specific control supports, as cones for instance. In the present work, we check that these minimal times are also required to obtain positive results of approximate null-controllability with uniform cost for the very same equations.

In fact, the result of approximate null-controllability with uniform cost obtained in this work for the hypoelliptic Ornstein-Uhlenbeck equation \eqref{25052018E3} is deduced from the study of the same notion for \textit{non-autonomous diffusive equations} of the form
\begin{equation}\label{05072021E1}\tag{$E_{Q_t}$}
\left\{\begin{aligned}
	& \partial_tf(t,x) + Q_tD_x\cdot D_xf(t,x) = h(t,x)\mathbbm 1_{\omega(t)}(x),\quad (t,x)\in(0,T]\times\mathbb R^n, \\
	& f(0,\cdot) = f_0\in L^2(\mathbb R^n),
\end{aligned}\right.
\end{equation}
where $(Q_t)_{t\in\mathbb R}$ is a family of real $n\times n$ matrices and $(\omega(t))_{t\in[0,T]}$ is a moving control support. This class of equations has also been considered in the work \cite{BEPS}, where the authors investigate their exact null-controllability (again, they consider a larger class of equations). Under an ellipticity assumption on the matrices $Q_t$, we prove that the equation \eqref{05072021E1} is cost-uniformly approximately null-controllable in time $T$ from $(\omega(t))_{t\in[0,T]}$ if and only if there exist a radius $r>0$ and a rate $\gamma\in(0,1]$ such that
\begin{equation}\label{20122021E2}
	\forall x\in\mathbb R^n,\quad\frac1T\int_0^T\Leb\big(\omega(t)\cap B(x,r)\big)\, \mathrm dt\geq\gamma V_r.
\end{equation}
As before for the equation \eqref{25052018E3}, the above geometric condition was proven in \cite{BEPS} to be necessary for the exact null-controllability of the non-autonomous diffusive equations \eqref{05072021E1}. 

The strategy of proof implemented in the present paper also allows to consider fractional diffusive models. More precisely, by adapting the study of the equation \eqref{05072021E1}, we get that the geometric condition \eqref{20122021E2} is necessary and sufficient to obtain a positive result of approximate null-controllability with uniform cost for the \textit{fractional heat equation} from moving control supports in a high diffusion setting, that is, for the evolution equation posed on $\mathbb R^n$ and associated with the operator $(-\Delta)^s$, in the regime $s\geq1/2$. This generalizes in particular our previous result \cite[Example 2.8]{AM}.

\subsubsection*{Notations} The following notations and conventions will be used all over this work:
\begin{enumerate}[label=\textbf{\arabic*.},leftmargin=* ,parsep=2pt,itemsep=0pt,topsep=2pt]
	\item The canonical Euclidean scalar product of $\mathbb R^n$ is denoted by $\cdot$ and $\vert\cdot\vert$ stands for the associated canonical Euclidean norm.
	\item For all measurable subset $\omega\subset\mathbb R^n$, the inner product of $L^2(\omega)$ is defined by
$$\langle u,v\rangle_{L^2(\omega)} = \int_{\omega}u(x)\overline{v(x)}\,\mathrm dx,\quad u,v\in L^2(\omega),$$
while $\Vert\cdot\Vert_{L^2(\omega)}$ stands for the associated norm.
	\item For all function $u\in\mathscr S(\mathbb R^n)$, the Fourier transform of $u$ is denoted $\widehat u$ or $\mathscr F u$ and is defined by
$$\widehat u(\xi) = (\mathscr Fu)(\xi) = \int_{\mathbb R^n}e^{-ix\cdot\xi}u(x)\,\mathrm dx,\quad \xi\in\mathbb R^n.$$
With this convention, Plancherel's theorem states that 
$$\forall u\in L^2(\mathbb R^n),\quad \Vert\widehat u\Vert_{L^2(\mathbb R^n)} = (2\pi)^{n/2}\Vert u\Vert_{L^2(\mathbb R^n)}.$$
	\item We denote by $\nabla_x$ the gradient and we set $D_x = -i\nabla_x$.
	\item For all measurable subset $\omega\subset\mathbb R^n$, $\mathbbm1_{\omega}$ denotes the characteristic function of $\omega$.
	\item Given some $r>0$, the notation $V_r$ stands for the volume of a Euclidean ball of radius $r$ in $\mathbb R^n$ with respect to the Lebesgue measure, which is denoted $\Leb$.
\end{enumerate}

\section{Statement of the main results}\label{results}

This section is devoted to present in details the main results contained in this work. Before stating those results, given some positive time $T>0$, let us define the different concepts related to the control system we are interested in:
\begin{enumerate}[label=$(\roman*)$,leftmargin=* ,parsep=2pt,itemsep=0pt,topsep=2pt]
	\item A \textit{moving control support} on $[0,T]$ is a family $(\omega(t))_{t\in[0,T]}$ of subsets of $\mathbb R^n$ such that the map $(t,x)\in[0,T]\times\mathbb R^n\mapsto\mathbbm 1_{\omega(t)}(x)$ is measurable.
	\item The control system \eqref{05072021E1} is said to be \textit{exactly null-controllable} in time $T$ from the moving control support $(\omega(t))_{t\in[0,T]}$ when for all $f_0\in L^2(\mathbb R^n)$, there exists a control $h\in L^2((0,T)\times\mathbb R^n)$ such that the mild solution of \eqref{05072021E1} satisfies $f(T,\cdot) = 0$.
	\item The control system \eqref{05072021E1} is said to be \textit{approximately null-controllable with uniform cost} in time $T$ from the moving control support $(\omega(t))_{t\in[0,T]}$ if for all $\varepsilon>0$, there exists a positive constant $C_{\varepsilon,T}>0$ such that for all $f_0\in L^2(\mathbb R^n)$, there exists a control $h\in L^2((0,T)\times\mathbb R^n)$ such that the mild solution of \eqref{05072021E1} satisfies 
	$$\Vert f(T,\cdot)\Vert_{L^2(\mathbb R^n)}\le\varepsilon\Vert f_0\Vert_{L^2(\mathbb R^n)},$$
	with moreover
	$$\int_0^T\Vert h(t,\cdot)\Vert^2_{L^2(\omega(t))}\, \mathrm dt\le C_{\varepsilon,T}\Vert f_0\Vert^2_{L^2(\mathbb R^n)}.$$
\end{enumerate}
We define similarly the notions of exact null-controllability and approximate null-controllability with uniform cost for the equation \eqref{25052018E3} in time $T>0$ from a fixed control support $\omega\subset\mathbb R^n$.

\subsection{Non-autonomous diffusive evolution equations} We are first interested in studying the cost-uniform approximate null-controllability of the evolution equation \eqref{05072021E1} in a diffusive setting. Precisely, fixing some positive time $T>0$, we assume that the following ellipticity condition holds for the family of time-dependent matrices $(Q_t)_{t\in\mathbb R}$: there exist a positive integer $k\geq1$ and a positive constant $c_T\in(0,1)$ such that for all $t\in[0,T]$ and $\xi\in\mathbb R^n$,
\begin{equation}\label{05072021E2}\tag{$A_T$}
	\int_t^TQ_s\xi\cdot\xi\, \mathrm ds\geq c_T(T-t)^k\vert\xi\vert^2.
\end{equation}
The main result contained in this work is the following

\begin{thm}\label{19112021T1} Let $(Q_t)_{t\in\mathbb R}$ be a family of real symmetric $n\times n$ matrices depending analytically on the time variable $t\in\mathbb R$, and $T>0$ be a positive time. Assume that the ellipticity condition \eqref{05072021E2} holds. Then, for all moving control support $(\omega(t))_{t\in[0,T]}$, the diffusive equation \eqref{05072021E1} is cost-uniformly approximately null-controllable in time $T$ from $(\omega(t))_{t\in[0,T]}$ if and only if there exist a radius $r>0$ and a rate $\gamma\in(0,1]$ such that
\begin{equation}\label{23112021E1}
	\forall x\in\mathbb R^n,\quad\frac1T\int_0^T\Leb\big(\omega(t)\cap B(x,r)\big)\, \mathrm dt\geq\gamma V_r.
\end{equation}
\end{thm}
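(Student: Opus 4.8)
The plan is to establish the equivalence via the classical duality between approximate null-controllability with uniform cost and a family of weak observability estimates for the adjoint (here, backward) equation. Writing $(U(t,s))_{t\geq s}$ for the evolution family generated by $-Q_tD_x\cdot D_x$, the cost-uniform approximate null-controllability in time $T$ is equivalent to the following: for every $\varepsilon>0$ there exists $C_{\varepsilon,T}>0$ such that for all $g\in L^2(\mathbb R^n)$,
\begin{equation*}
	\Vert U(T,0)^*g\Vert_{L^2(\mathbb R^n)}^2 \leq C_{\varepsilon,T}\int_0^T\Vert \mathbbm 1_{\omega(t)}U(T,t)^*g\Vert_{L^2(\mathbb R^n)}^2\,\mathrm dt + \varepsilon^2\Vert g\Vert_{L^2(\mathbb R^n)}^2.
\end{equation*}
I would first record this reduction (a standard Hilbert-Uniqueness / Lebeau--Robbiano-type argument; see \cite{AM, MR3816981}), so that both implications become statements about this observability inequality.

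For the sufficiency ($\eqref{23112021E1}\Rightarrow$ controllability), the strategy is a spectral-cutoff (Lebeau--Robbiano) scheme adapted to the non-autonomous, whole-space setting. The ellipticity assumption \eqref{05072021E2} forces the solution, on time windows $[t,T]$, to behave like the solution of a heat-type equation with an effective diffusion time comparable to $(T-t)^k$; combined with a quantitative unique continuation / Logvinenko--Sereda estimate for functions with Fourier support in a ball, the integral thickness condition \eqref{23112021E1} will yield a spectral inequality of the form $\Vert \Pi_\rho g\Vert_{L^2(\mathbb R^n)}^2 \le C e^{C\rho}\int_0^T\Vert\mathbbm 1_{\omega(t)}U(T,t)^*\Pi_\rho g\Vert^2\,\mathrm dt$, where $\Pi_\rho$ is the projector onto frequencies $\le\rho$. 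Here the time-integral is essential: at a fixed time $\mathbbm 1_{\omega(t)}$ need not be thick, but \eqref{23112021E1} says it is thick "on average", and one recovers an honest Logvinenko--Sereda constant after integrating over a subinterval of $[0,T]$ where the set $\{t:\Leb(\omega(t)\cap B(x,r))\ge \tfrac{\gamma}{2}V_r\}$ has positive measure bounded below uniformly in $x$. Feeding this spectral inequality together with the dissipation estimate on high frequencies into the iterative Lebeau--Robbiano construction (or the Tenenbaum--Tucsnak / Miller adaptation) produces the desired weak observability estimate with uniform cost.

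For the necessity ($\neg\eqref{23112021E1}\Rightarrow$ no uniform cost), the plan is to construct, for each scale, a Gaussian-type quasimode that defeats the observability inequality. If \eqref{23112021E1} fails then for every $r>0$ and every $\gamma\in(0,1]$ there is a point $x_\gamma$ with $\frac1T\int_0^T\Leb\big(\omega(t)\cap B(x_\gamma,r)\big)\,\mathrm dt<\gamma V_r$; choosing $g$ to be (a frequency-localized) normalized Gaussian concentrated near $x_\gamma$ in space and propagating it by $U(T,\cdot)^*$ — whose action, under \eqref{05072021E2} and analyticity of $t\mapsto Q_t$, keeps a definite fraction of the $L^2$ mass inside a fixed ball $B(x_\gamma, r)$ throughout $[0,T]$ — one makes the observation term $\int_0^T\Vert\mathbbm 1_{\omega(t)}U(T,t)^*g\Vert^2\,\mathrm dt$ arbitrarily small relative to $\Vert U(T,0)^*g\Vert^2\sim\Vert g\Vert^2$, contradicting the inequality for small $\varepsilon$. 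This is essentially the argument already present in \cite{BEPS} for exact null-controllability, transported to the approximate-with-uniform-cost setting, and it is where the analyticity hypothesis is used to control the spreading of mass.

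The main obstacle I expect is the sufficiency direction, specifically the derivation of the averaged Logvinenko--Sereda / spectral inequality in the genuinely time-dependent regime: one must show that the integral thickness condition \eqref{23112021E1}, which only controls a time-average of the sections $\omega(t)$, still yields a spectral inequality with a constant that is \emph{uniform in $x$} and with the correct exponential dependence $e^{C\rho}$ in the frequency cutoff $\rho$ — and then verify that this constant interacts correctly with the $(T-t)^k$ degeneracy of the effective diffusion in \eqref{05072021E2} so that the Lebeau--Robbiano iteration still converges and delivers a \emph{finite} cost $C_{\varepsilon,T}$. Handling the interplay between the time-averaging in the geometric condition and the non-uniform-in-time dissipation is the crux; the necessity direction and the duality reduction are comparatively routine.
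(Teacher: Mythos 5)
Your duality reduction and your necessity sketch are essentially the route the paper takes (observability via a Gaussian test function), although two points there deserve care: the paper's necessity argument uses neither the analyticity of $t\mapsto Q_t$ nor the ellipticity condition \eqref{05072021E2}, and the smallness of the observation term is not obtained from a ``definite fraction of mass stays in $B(x_\gamma,r)$'' claim (which by itself leaves the contribution of $\omega(t)\setminus B(x_\gamma,r)$ uncontrolled), but by an $L^\infty$ bound on the evolved Gaussian inside the ball together with choosing the radius $r$ large enough, uniformly in the center, to kill the far-field part of the integral. These are repairable.

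The genuine gap is in the sufficiency direction. Your plan hinges on an ``averaged Logvinenko--Sereda / spectral inequality'' $\Vert\Pi_\rho g\Vert^2\le Ce^{C\rho}\int_0^T\Vert\mathbbm 1_{\omega(t)}U(T,t)^*\Pi_\rho g\Vert^2\,\mathrm dt$ with constant uniform in $x$, to be fed into a Lebeau--Robbiano/Miller iteration. This key inequality is not a routine consequence of \eqref{23112021E1}: the fixed-time slices $\omega(t)$ need not be thick at any time (see the dilation example in the paper, where no $\omega(t)$ is thick), and the set of times at which the local density at $x$ is good depends on $x$, so there is no common sub-interval on which an honest Logvinenko--Sereda constant can be recovered; moreover the Lebeau--Robbiano telescoping would require such an inequality on arbitrary subintervals, whereas \eqref{23112021E1} only constrains the average over all of $[0,T]$. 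More tellingly, if such a spectral inequality with $e^{C\rho}$ dependence did hold, the iteration you describe would yield exact null-controllability under integral thickness, which the paper explicitly states is an open question. The paper circumvents this entirely: it proves Gevrey-type smoothing estimates in time and space for $U(T,t)$ (Theorem \ref{06072021T1}, using \eqref{05072021E2} and the analyticity of $Q_t$), reinterprets \eqref{23112021E1} as thickness of the space-time set $\Omega=\{(t,x)\in[0,T_\gamma]\times\mathbb R^n: x\in\omega(t)\}$ with $T_\gamma=(1-\gamma/2)T$, and then runs a good/bad cylinder decomposition combined with a Nazarov--Sodin--Volberg-type propagation-of-smallness estimate (Proposition \ref{30112021P1}) to obtain the weak observability estimate directly, with the $\varepsilon\Vert g\Vert^2$ loss absorbed by the bad cylinders; no spectral inequality and no dissipation iteration are used. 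As written, your sufficiency argument therefore rests on an unproved (and, in this generality, open-problem-strength) ingredient, which you yourself flag as the main obstacle but do not resolve.
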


\begin{rk}\label{07122021R1} The above geometric condition \eqref{23112021E1} has already been considered in the work \cite{BEPS} where a more general class of equations associated with non-autonomous Ornstein-Uhlenbeck operators is studied, which takes the form
\begin{equation}\label{07122021E2}\tag{$E_{B_t,Q_t}$}
\left\{\begin{aligned}
	& \partial_tf(t,x) + P_tf(t,x) = h(t,x)\mathbbm 1_{\omega(t)}(x),\quad (t,x)\in(0,T]\times\mathbb R^n, \\
	& f(0,\cdot) = f_0\in L^2(\mathbb R^n),
\end{aligned}\right.
\end{equation}
where the time-dependent operator $P_t$ is given by 
$$P_t = Q_tD_x\cdot D_x + B_tx\cdot\nabla_x,\quad x\in\mathbb R^n.$$
In the present work, we took the decision to consider only the equations \eqref{05072021E1}, since our main objective is to obtain a positive cost-uniform approximate null-controllability result for the hypoelliptic Ornstein-Uhlenbeck equations \eqref{25052018E3}, whose study can be deduced from the one of \eqref{05072021E1} for the particular matrices $Q_t$ defined by \eqref{06122021E2}. However, the strategy implemented in this paper can be easily adapted to deal with the equations \eqref{07122021E2}. By the way, notice that those matrices \eqref{06122021E2} turn out to be analytic with respect to the time variable $t\in\mathbb R$. This is the reason why we made the same assumption in Theorem \ref{19112021T1}. In fact, this regularity condition is crucial in the proof of this result, and we do not currently know how to relax it.
\end{rk}

\begin{rk} It follows from \cite[Theorem 1.5]{BEPS} that if the equation \eqref{07122021E2} is exactly null-controllable on $[0,T]$ from the moving control support $(\omega(t))_{t\in[0,T]}$, then there exist a radius $r>0$ and a rate $\gamma\in(0,1]$ such that
\begin{equation}\label{07122021E3}
	\forall x\in\mathbb R^n,\quad\frac1T\int_0^T\Leb\big(R(0,T-t)\omega(t)\cap B(x,r)\big)\, \mathrm dt\geq\gamma V_r,
\end{equation}
where $R$ stands for the resolvent of the following time-varying linear system
$$\dot X(t) = B_{T-t}X(t),\quad t\in[0,T].$$
The integral thickness condition \eqref{23112021E1} considered in this work corresponds to \eqref{07122021E3} when the matrices $B_t$ are all equal to zero. A natural question, asked in \cite{BEPS}, is then to wonder if the condition \eqref{23112021E1} is sufficient to derive exact null-controllability for the equation \eqref{05072021E1}. This is a very interested point that will not be tackled here, and therefore remains open. However, we provide a partial answer to the authors of \cite{BEPS} by proving that the integral thickness condition \eqref{23112021E1} is a necessary and sufficient geometric condition to ensure the cost-uniform approximate null-controllability of the equation \eqref{05072021E1}.
\end{rk}

\subsection{Hypoelliptic Ornstein-Uhlenbeck evolution equations} As an application of Theorem \ref{19112021T1}, we perform the study of the cost-uniform approximate null-controllability of the hypoelliptic Ornstein-Uhlenbeck equation \eqref{25052018E3}. We check in Section \ref{transition} that the study of the equation \eqref{25052018E3} reduces to the one of the equation \eqref{05072021E1} associated with the time-dependent matrices $Q_t$ given for all $t\in\mathbb R$ by  
\begin{equation}\label{06122021E2}
	Q_t = e^{(T-t)B}Qe^{(T-t)B^T},
\end{equation}
and from the moving control support $(\omega(t))_{t\in[0,T]}$ with $\omega(t) = e^{(T-t)B}\omega$. Moreover, we work in a hypoelliptic setting by assuming that the following Kalman rank condition holds
\begin{equation}\label{05072021E3}
	\Rank\big[B\ \vert\ \sqrt Q\big] = n,
\end{equation}
where 
$$\big[B\ \vert\ \sqrt Q\big] = \big[\sqrt Q,B\sqrt Q,\cdots,B^{n-1}\sqrt Q\big],$$
is the $n\times n^2$ matrix obtained by writing consecutively the columns of the matrices $B^j\sqrt Q$. As a consequence of the seminal work \cite{H}, this condition is known to be one of the characterizations of the hypoellipticity of the operator $P$, see e.g. the introduction of \cite{AB}. In Section \ref{transition}, we also check that under the Kalman rank condition \eqref{05072021E3}, an ellipticity condition of the form \eqref{05072021E2} holds for the matrices $Q_t$ given by \eqref{06122021E2}. As a consequence, we obtain a necessary and sufficient geometric condition on the support control $\omega\subset\mathbb R^n$ that ensures the cost-uniform approximate null-controllability of the hypoelliptic Ornstein-Uhlenbeck equation \eqref{25052018E3}, presented in the following statement.

\begin{cor}\label{06122021C1} Let $P$ be the Ornstein-Uhlenbeck operator defined in \eqref{12052021E1}. Assume that the Kalman rank condition \eqref{05072021E3} holds. Then, for all positive time $T>0$, the evolution equation \eqref{25052018E3} is cost-uniformly approximately null-controllable from the control support $\omega$ in time $T$ if and only if there exist a radius $r>0$ and a rate $\gamma\in(0,1]$ such that
\begin{equation}\label{06122021E1}
	\forall x\in\mathbb R^n,\quad\frac1T\int_0^T\Leb\big((e^{tB}\omega)\cap B(x,r)\big)\, \mathrm dt\geq\gamma V_r.
\end{equation}
\end{cor}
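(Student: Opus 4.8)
The plan is to derive Corollary \ref{06122021C1} from Theorem \ref{19112021T1}, the link being the change of unknown described in Section \ref{transition}. First I would recall that reduction: conjugating the mild solution of the autonomous hypoelliptic equation \eqref{25052018E3} by the flow of the drift field $Bx\cdot\nabla_x$, namely setting $\tilde f(t,x)=f\big(t,e^{-(T-t)B}x\big)$ up to the Jacobian normalisation, eliminates the first-order term and produces precisely the non-autonomous diffusive equation \eqref{05072021E1} with time-dependent matrices $Q_t=e^{(T-t)B}Qe^{(T-t)B^T}$ and moving control support $\omega(t)=e^{(T-t)B}\omega$. Since $e^{(T-t)B}$ and its inverse are bounded uniformly for $t\in[0,T]$ and equal the identity at $t=T$, this correspondence is an isomorphism of the two control systems which preserves the cost-uniform approximate null-controllability in time $T$; one also checks readily that $(t,x)\mapsto\mathbbm 1_{\omega(t)}(x)=\mathbbm 1_{\omega}\big(e^{-(T-t)B}x\big)$ is measurable, so that $(\omega(t))_{t\in[0,T]}$ is a genuine moving control support. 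Consequently, \eqref{25052018E3} is cost-uniformly approximately null-controllable from $\omega$ in time $T$ if and only if \eqref{05072021E1}, with these $Q_t$ and $\omega(t)$, is cost-uniformly approximately null-controllable from $(\omega(t))_{t\in[0,T]}$ in time $T$.

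Next I would check that Theorem \ref{19112021T1} applies to this transformed equation. The matrices $Q_t=e^{(T-t)B}Qe^{(T-t)B^T}$ are symmetric, and they depend analytically on $t\in\mathbb R$ because $t\mapsto e^{(T-t)B}$ is entire. It remains to produce the ellipticity condition \eqref{05072021E2}. Using the substitution $u=T-s$ one has $\int_t^TQ_s\xi\cdot\xi\,\mathrm ds=\int_0^{T-t}e^{uB}Qe^{uB^T}\xi\cdot\xi\,\mathrm du$, so \eqref{05072021E2} amounts to the lower bound $\int_0^\tau e^{uB}Qe^{uB^T}\xi\cdot\xi\,\mathrm du\geq c_T\tau^k|\xi|^2$ for all $\xi\in\mathbb R^n$ and $\tau\in[0,T]$. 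Under the Kalman rank condition \eqref{05072021E3}, such an estimate holds, with $k$ determined by the structure of the matrix $[B\,\vert\,\sqrt Q]$ and $c_T>0$ depending on $T$, $B$ and $Q$; this is the classical quantitative positivity of the Kalman Gramian, recalled in Section \ref{transition}.

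Finally, applying Theorem \ref{19112021T1} to \eqref{05072021E1} with the above data, cost-uniform approximate null-controllability in time $T$ is equivalent to the existence of $r>0$ and $\gamma\in(0,1]$ such that $\frac1T\int_0^T\Leb\big(e^{(T-t)B}\omega\cap B(x,r)\big)\,\mathrm dt\geq\gamma V_r$ for every $x\in\mathbb R^n$. The measure-preserving change of variable $s=T-t$ on the interval $[0,T]$ turns this integral into $\frac1T\int_0^T\Leb\big(e^{sB}\omega\cap B(x,r)\big)\,\mathrm ds$, which, after relabelling $s$ as $t$, is exactly condition \eqref{06122021E1} with the same radius $r$ and rate $\gamma$. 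Together with the equivalence obtained in the first step, this yields the corollary. I do not expect any real difficulty within the proof of the corollary itself --- it reduces to a clean change of variable in the time integral; the substantive work lies upstream, in Section \ref{transition}, in making the conjugation rigorous at the level of mild solutions and in establishing the quantitative positivity of the Kalman Gramian.
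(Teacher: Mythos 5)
Your proposal is correct and follows the same overall scheme as the paper: reduce \eqref{25052018E3} to the non-autonomous equation \eqref{05072021E1} with $Q_t=e^{(T-t)B}Qe^{(T-t)B^T}$ and $\omega(t)=e^{(T-t)B}\omega$, check analyticity in $t$ and that the Kalman rank condition \eqref{05072021E3} yields the ellipticity condition \eqref{05072021E2} (with $k=2k_0+1$, via the Gramian estimate of \cite{A}), apply Theorem \ref{19112021T1}, and conclude with the change of variable $s=T-t$ in the time integral, which indeed turns \eqref{23112021E1} into \eqref{06122021E1}. The one place where your execution differs from the paper's is the reduction step itself: you conjugate the controlled state equation directly by the flow, $\tilde f(t,x)=f\big(t,e^{-(T-t)B}x\big)$, and argue that the Jacobian factors $e^{\pm(T-t)\Tr(B)}$, being uniformly bounded on $[0,T]$, do not affect cost-uniform approximate null-controllability; the paper instead never touches the state equation, but passes to the dual side, characterizes the property by the weak observability estimate \eqref{06112020E6} (Proposition 6 of \cite{MR4041279}, Corollary \ref{12012022C1}), inserts the explicit formula \eqref{01122021E2} for $e^{-tP_{co}^*}$ from \cite{AB} (after the $\Tr(B)/2$ correction, which makes the transport factor unitary), and identifies the resulting estimate with the one for the multipliers $U(T,t)$ on $\omega(t)$. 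The dual route buys exactly what you flag as the remaining work in your last sentence: it sidesteps having to justify that $\tilde f$ is the mild solution of the transformed non-autonomous system, since everything is phrased in terms of explicitly known evolution operators, and it absorbs the Jacobian factors automatically. Your primal version is legitimate, but to make it rigorous you would need either the same explicit representation of the evolution operators or a Duhamel-type identification of the transformed mild solution, together with the (easy but necessary) observation that the bounded isomorphisms distort $\varepsilon$ and the cost constant only by fixed factors depending on $T$ and $B$, which is harmless because $\varepsilon$ is arbitrary.
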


\begin{rk} Let us recall that a Borel set $\omega\subset\mathbb R^n$ is called \textit{thick} when there exist a radius $r>0$ and a rate $\gamma\in(0,1]$ such that 
\begin{equation}\label{07122021E4}
	\forall x\in\mathbb R^n,\quad\Leb\big(\omega\cap B(x,r)\big)\geq\gamma V_r.
\end{equation}
As explained in the introduction, the thickness condition is known to be a geometric necessary and sufficient condition to ensure the stabilization and the exact or approximate null-controllability with uniform cost of a large class of parabolic equations.  The above \textit{integral thickness condition} \eqref{06122021E1} generalizes the thickness property \eqref{07122021E4} and is well-adapted for the study of the null-controllability of hypoelliptic evolution equations, as illustrated in Corollary \ref{06122021C1}. Following the discussion started in Remark \ref{07122021R1}, one could legitimately wonder if the condition \eqref{06122021E1} allows to obtain positive exact null-controllability results for the equation \eqref{25052018E3}. This is still an interesting open question that will not be tackled in the present work. However, let us recall from \cite[Theorem 1.12]{AB} that when $\omega\subset\mathbb R^n$ is thick, there exists a positive constant $C>0$ such that for all $T>0$ and $g\in L^2(\mathbb R^n)$, the following exact observability estimate holds
$$\big\Vert e^{-TP_{co}}g\big\Vert^2_{L^2(\mathbb R^n)}\le C\exp\bigg(\frac C{T^{1+2k_0}}\bigg)\int_0^T\big\Vert e^{-tP_{co}}g\big\Vert^2_{L^2(\omega)}\, \mathrm dt,$$
where we set $P_{co} = P + \Tr(B)/2$, and where $0\le k_0\le n-1$ is the integer \eqref{06122021E3} intrinsically linked to the Kalman rank condition \eqref{05072021E3}. By the Hilbert Uniqueness Method, this implies that the equation \eqref{25052018E3} is exactly null-controllable from thick control supports in any positive time $T>0$. 
\end{rk}

\subsection{Examples} Let us now illustrate Theorem \ref{19112021T1} and Corollary \ref{06122021C1} by considering the same three examples as in the work \cite{BEPS}. 

The first two examples considered in this work are the Kolmogorov equation \eqref{12072021E1} and the Kolmogorov equation with an external quadratic force \eqref{12072021E2} in two dimensions, for which the flows generated by the matrix $B$ are respectively translations and rotations. Considering cones as control supports and using the fact that the integral thickness condition \eqref{06122021E1} is necessary to obtain positive exact null-controllability results for these two evolution equations, the authors of \cite{BEPS} exhibit a minimal time $T_0>0$ for which the equations \eqref{12072021E1} and  \eqref{12072021E2} are not exactly null-controllable in $[0,T]$ whenever $0<T\le T_0$. However, they can not conclude that these particular equations are exactly null-controllable on $[0,T]$ when $T>T_0$. In the present paper, we give a partial answer by proving that these equations are cost-uniformly approximately null-controllable on $[0,T]$ under the condition $T>T_0$.

\begin{ex}[Translation] We consider the Kolmogorov equation
\begin{equation}\label{12072021E1}
\left\{\begin{array}{l}
	(\partial_t - \partial^2_v+v\partial_x)f(t,x,v) = h(t,x,v)\mathbbm1_{\omega}(x,v),\ (t,x,v)\in(0,+\infty)\times\mathbb R^2,\\[5pt]
	f(0,\cdot) = f_0\in L^2(\mathbb R^2).
\end{array}\right.
\end{equation}
This is the equation \eqref{25052018E3} associated with the matrices
$$Q = \begin{pmatrix}
	0 & 0 \\
	0 & 1
\end{pmatrix}\quad\text{and}\quad B = \begin{pmatrix}
	0 & 1 \\
	0 & 0
\end{pmatrix}.$$
Notice that the Kalman rank condition \eqref{05072021E3} holds, and that the flow associated with the matrix $B$ is composed of translations given by
$$\forall t\geq0,\quad e^{tB} = \begin{pmatrix}
	1 & t \\
	0 & 1
\end{pmatrix}.$$
Let $0<\theta_0<\pi/2$ be an angle and $\omega_{\theta_0}\subset\mathbb R^2$ be the following cone
$$\omega_{\theta_0} = \big\{(x,\alpha x)\in\mathbb R^2 : -\tan\theta_0<\alpha<\tan\theta_0\big\}.$$
It follows from \cite[Proposition 2.3]{BEPS} that $\omega_{\theta_0}$ satisfies the integral thickness condition \eqref{06122021E1} associated with the above matrix $B$ if and only if $T>2/\tan\theta_0$, and so the equation \eqref{12072021E1} is not exactly null-controllable from $\omega_{\theta_0}$ when $T\le 2/\tan\theta_0$. However, we deduce from Corollary \ref{06122021C1} that for all positive time $T>0$, the Kolmogorov equation \eqref{12072021E1} is cost-uniformly approximately null-controllable from the control support $\omega_{\theta_0}$ in time $T$ if and only if $T > 2/\tan\theta_0$.
\end{ex}

\begin{ex}[Rotation] Let us now consider the Kolmogorov equation with external force
\begin{equation}\label{12072021E2}
\left\{\begin{array}{l}
	(\partial_t - \partial^2_v+v\partial_x-x\partial_v)f(t,x,v) = h(t,x,v)\mathbbm1_{\omega}(x,v),\ (t,x,v)\in(0,+\infty)\times\mathbb R^2,\\[5pt]
	f(0,\cdot) = f_0\in L^2(\mathbb R^2).
\end{array}\right.
\end{equation}
This is the equation \eqref{25052018E3} associated with the matrices
$$Q = \begin{pmatrix}
	0 & 0 \\
	0 & 1
\end{pmatrix}\quad\text{and}\quad B = \begin{pmatrix}
	0 & 1 \\
	-1 & 0
\end{pmatrix}.$$
Notice that the Kalman rank condition \eqref{05072021E3} holds, and that the flow associated with the matrix $B$ is composed of rotations given by
$$\forall t\geq0,\quad e^{tB} = \begin{pmatrix}
	\cos t & \sin t \\
	-\sin t & \cos t
\end{pmatrix}.$$
Let $0<\theta_0<\pi/4$ be an angle and $\omega_{\theta_0}\subset\mathbb R^2$ be the following cone
$$\omega_{\theta_0} = \big\{(x,\alpha x)\in\mathbb R^2 : 0<\alpha<\tan\theta_0\big\}.$$
As proven in \cite[Proposition 2.4]{BEPS}, the set $\omega_{\theta_0}$ satisfies the geometric condition \eqref{06122021E1} associated with the matrix $B$ when $T>\pi-\theta_0$, and does not in the case $T<\pi-\theta_0$. As a consequence, the equation \eqref{12072021E2} is not exactly null-controllable from $\omega_{\theta_0}$ when $T<\pi-\theta_0$. Let us check that $\omega_{\theta_0}$ also fails to satisfy this geometric condition when $T=\pi-\theta_0$. First of all, notice that for all $0 \leq t \leq \pi-\theta_0$, the set $e^{t B}\omega_{\theta_0}$ is the cone $\omega_{\theta_0}$ rotated with angle $-t$. It follows that for all $0< t < \frac{\pi}2$, 
\begin{equation}\label{cone1}
	e^{tB}\omega_{\theta_0} \cap \big\{(x,\alpha x)\in\mathbb R^2 : \tan (\theta_0-t)<\alpha \big\}=\emptyset,
\end{equation}
and for all $\frac{\pi}{2} < t < \pi- \theta_0$, 
\begin{equation}\label{cone2}
	e^{tB}\omega_{\theta_0} \cap \big\{(x,\alpha x)\in\mathbb R^2 : 0<\alpha<\tan (\pi-t)\big\}=\emptyset.
\end{equation}
Let $0<t<\pi-\theta_0$ and $r>0$. We deduce from \eqref{cone1} and \eqref{cone2} that for $n\gg1$ sufficiently large, we have
$$(e^{tB}\omega_{\theta_0})\cap B\big((n,n \tan \theta_0),r\big) = \emptyset.$$
This implies, by Lebesgue's dominated convergence theorem, that
$$\lim_{n \to +\infty} \int_0^{\pi-\theta_0} \Leb\big((e^{tB}\omega_{\theta_0}) \cap B((n,n \tan \theta_0), r)\big)\,\mathrm dt =0.$$
As we claimed, the set $\omega_{\theta_0}$ therefore fails to satisfy the geometric condition \eqref{06122021E1} associated with the matrix $B$ when $T=\pi-\theta_0$. Corollary \ref{06122021C1} then implies that for all positive time $T>0$, the Kolmogorov equation with quadratic external force \eqref{12072021E2} is cost-uniformly approximately null-controllable from the control support $\omega_{\theta_0}$ in time $T$ if and only if $T>\pi-\theta_0$.
\end{ex}

The last example considered in \cite{BEPS} deals with the heat equation in one dimension with a dilating moving control support which has the particularity to be constituted of non-thick subsets of $\mathbb R^n$, but which satisfies the integral thickness condition \eqref{23112021E1} for any positive time $T>0$. As before, the question stated by the authors of \cite{BEPS} is to know whether this equation is exactly null-controllable or not. As in the two first examples, we give a partial answer by checking that this equation is cost-uniformly approximately null-controllable in any positive time $T>0$ (there is no minimal time there).

\begin{ex}[Dilatation] In this last example, we consider the heat equation in dimension 1 with a moving control support
\begin{equation}\label{12072021E3}
\left\{\begin{array}{l}
	(\partial_t - \partial^2_x)f(t,x) = h(t,x)\mathbbm1_{\omega(t)}(x),\ (t,x)\in(0,+\infty)\times\mathbb R,\\[5pt]
	f(0,\cdot) = f_0\in L^2(\mathbb R),
\end{array}\right.
\end{equation}
where, setting $\mu>0$ a positive real number, the Borel subsets $\omega(t)\subset\mathbb R$ are defined for all $t\geq0$ by
$$\omega(t) = \sqrt{1+2\mu t}\,\omega,\quad\omega = [-1,1]\cup\bigcup_{n\geq1}(n^2,n^2+n)\cup(-n^2-n,-n^2).$$
This is of course the equation \eqref{05072021E1} when $Q_t = 1$ for all $t\geq0$, and the ellipticity condition \eqref{05072021E2} is satisfied for all $T>0$. It is proven in \cite[Subsection 2.5]{BEPS} that for all $t\geq0$, the set $\omega(t)$ is not thick, and also in Proposition 2.5 of the same work that for all $T>0$, the moving control support $(\omega(t))_{t\in[0,T]}$ satisfies the integral thickness condition \eqref{23112021E1}. Theorem \ref{19112021T1} therefore implies that the heat equation \eqref{12072021E3} is cost-uniformly approximately null-controllable from the moving control support $(\omega(t))_{t\in[0,T]}$ in any positive time $T>0$. Notice that as a consequence of Proposition \ref{07122021P1}, stated in the next paragraph, the same result holds for the equation \eqref{12072021E3} where $-\partial^2_x$ is replaced by the fractional Laplacian $(-\partial^2_x)^s$, with $s\geq1/2$ a positive real number.
\end{ex}

\subsection{Heuristics} Let us now present the strategy of the proof of Theorem \ref{19112021T1}. The key step consists in using the fact that the cost-uniform approximate null-controllability of the equation \eqref{05072021E1} in time $T>0$ is equivalent to the following weak observability estimate, see Corollary \ref{12012022C1} in the appendix
\begin{multline}\label{06122021E4}
	\forall\varepsilon\in(0,1), \exists C_{\varepsilon,T}>0, \forall g\in L^2(\mathbb R^n), \\
	\big\Vert U(T,0)g\big\Vert^2_{L^2(\mathbb R^n)}\le C_{\varepsilon,T}\int_0^T\big\Vert U(T,t)g\big\Vert^2_{L^2(\omega(t))}\, \mathrm dt + \varepsilon\Vert g\Vert^2_{L^2(\mathbb R^n)},
\end{multline}
where the Fourier multiplier $U(T,t)$ is given by
\begin{equation}\label{24112021E1}
	U(T,t) = \exp\bigg(-\int_t^TQ_sD_x\cdot D_x\, \mathrm ds\bigg).
\end{equation}
On the one hand, by propagating a Gaussian function in this observability estimate, we check in Section \ref{necessary} that the geometric condition \eqref{23112021E1} is necessary to obtain a positive cost-uniform approximate null-controllability result for the equation \eqref{05072021E1}.

In order to prove that this geometric condition is also sufficient, we adapt the strategy used in \cite[Subsection 5.2]{AM} where the authors proved that the thickness property \eqref{07122021E4} is a necessary and sufficient condition that ensures the cost-uniform approximate null-controllability for a large class of parabolic equations. In the present work, we begin by noticing that the geometric condition \eqref{23112021E1} implies that the set $\Omega\subset[0,T_{\gamma}]\times\mathbb R^n$ defined as follows
$$\Omega= \Big\{(t,x)\in[0,T_{\gamma}]\times\mathbb R^n : x \in \omega(t)\Big\},$$
is a thick subset of $[0,T_{\gamma}]\times\mathbb R^n$, with $T_{\gamma} = (1-\gamma/2)T$ and where the rate $\gamma\in(0,1]$ is the one appearing in \eqref{23112021E1}. Notice that we have to get strictly far from the final time $T$ in order to avoid blow-up phenomena. This is the precise reason why we can use the same strategy as in the work \cite{AM}. First, we need to establish smoothing estimates in the time and space variables of the following form, by using the ellipticity condition \eqref{05072021E2},
\begin{equation}\label{06122021E6}
	\big\Vert\partial^m_t\partial^{\alpha}_x(U(T,t)g)\big\Vert_{L^2(\mathbb R^n)}\le c_0^{m+\vert\alpha\vert}\ \bigg(\frac{C_T}{T-t}\bigg)^{\frac k2(2m+\vert\alpha\vert)}\ m!\ \sqrt{\alpha!}\ \Vert g\Vert_{L^2(\mathbb R^n)},
\end{equation}
with $C_T = \max(1,T)s_T/c_T$, $c_T\in(0,1)$ and $k\geq1$ being the ones involved in \eqref{05072021E2}, and $s_T>1$ being a positive constant related to the analyticity property of the family $(Q_t)_{t\in\mathbb R}$ on $(-T,T)$. These estimates are obtained in Section \ref{smooth}. Notice that when we work with the thickness condition \eqref{07122021E4}, which does not depend on time, as in the work \cite{AM}, we only have to consider the smoothing properties in space of the evolution equation at play. Then, by using the above estimates and elements of harmonic analysis, and more precisely the unique continuation property stated in Proposition \ref{30112021P1}, coming essentially from the second author's work \cite{M}, we obtain the following quantitative unique continuation property in Section \ref{suffcond}:
$$\int_0^{T_{\gamma}}\big\Vert U(T,t)g\big\Vert^2_{L^2(\mathbb R^n)}\ \mathrm dt \le \bigg(\frac{K_n(2-\gamma)}{\gamma}\bigg)^{K_nC}\int_0^{T_{\gamma}} \big\Vert U(T,t)g\big\Vert^2_{L^2(\omega(t))}\, \mathrm dt+ \varepsilon \Vert g\Vert^2_{L^2(\mathbb R^n)},$$
where the positive constant $K_n>0$ only depends on the dimension $n$, and where the other positive constant $C = C_{\varepsilon,\gamma,r,k,T}>0$ is given by
$$C = \bigg(1-\log(\varepsilon r^n) +\log \bigg(1+\frac{C^{2k}_T}{\gamma^{2k}T^{2(k-1)}}+ \frac{r^2C^k_T}{\gamma^kT^k} \bigg)\bigg)\exp\bigg(\frac{K_nC^k_T}{\gamma^kT^{k-1}}\bigg)+ \frac{r^2C^k_T}{\gamma^kT^k},$$
where $r>0$ is the radius appearing in \eqref{23112021E1}. The weak observability estimate \eqref{06122021E4} is then deduced from a monotonicity argument.

Notice that our strategy can be adapted to deal with other equations than the one studied in the present work, and in particular fractional diffusive models. Indeed, let us consider $s>0$ a positive real number and the following associated fractional heat equation posed on the whole Euclidean space
\begin{equation}\label{07122021E1}\tag{$E_s$}
\left\{\begin{aligned}
	& \partial_tf(t,x) + (-\Delta)^sf(t,x) = h(t,x)\mathbbm 1_{\omega(t)}(x),\quad (t,x)\in(0,T]\times\mathbb R^n, \\
	& f(0,\cdot) = f_0\in L^2(\mathbb R^n),
\end{aligned}\right.
\end{equation}
where $T>0$ is a final time and $(\omega(t))_{t\in[0,T]}$ is a moving control support. By passing to the Fourier side and using the same estimates as in Section \ref{smooth}, one can easily check that there exists a positive constant $c>0$ such that for all $m\geq0$, $\alpha\in\mathbb N^n$, $t>0$ and $g\in L^2(\mathbb R^n)$,
$$\big\Vert\partial^m_t\partial^{\alpha}_x(e^{-t(-\Delta)^s}g)\big\Vert_{L^2(\mathbb R^n)}\le \frac{c^{m+\vert\alpha\vert}}{t^{m+\frac{\vert\alpha\vert}{2s}}}\ m!\ (\alpha!)^{\frac1{2s}}\ \Vert g\Vert_{L^2(\mathbb R^n)}.$$
As a consequence, by assuming that $s\geq1/2$ and using Proposition \ref{30112021P1} when $s>1/2$, Proposition \ref{17092021P1} when $s=1/2$, and the same steps as in Section \ref{suffcond}, one can obtain an uncertainty principle of the form \eqref{06122021E4} for the semigroup generated by the fractional Laplacian $(-\Delta)^s$, and therefore conclude to the following proposition (the necessary part follows exactly the same steps as in Section \ref{necessary})

\begin{prop}\label{07122021P1} Let $s\geq1/2$ be a positive real number. For all positive time $T>0$ and all moving control support $(\omega(t))_{t\in[0,T]}$, the fractional heat equation \eqref{07122021E1} is cost-uniformly approximately null-controllable in time $T$ from $(\omega(t))_{t\in[0,T]}$ if and only if the integral thickness condition \eqref{23112021E1} holds.
\end{prop}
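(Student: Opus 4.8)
The plan is to follow, \emph{mutatis mutandis}, the proof of Theorem \ref{19112021T1}, the only structural change being that the time-dependent propagator $U(T,t)$ from \eqref{24112021E1} is replaced by the autonomous one $U(T,t)=e^{-(T-t)(-\Delta)^s}$. As a first step I would invoke Corollary \ref{12012022C1} of the appendix to reduce the cost-uniform approximate null-controllability of \eqref{07122021E1} in time $T$ from $(\omega(t))_{t\in[0,T]}$ to the weak observability estimate \eqref{06122021E4} for this propagator, namely
\begin{multline*}
	\forall\varepsilon\in(0,1),\ \exists C_{\varepsilon,T}>0,\ \forall g\in L^2(\mathbb R^n),\\
	\big\Vert e^{-T(-\Delta)^s}g\big\Vert^2_{L^2(\mathbb R^n)}\le C_{\varepsilon,T}\int_0^T\big\Vert e^{-(T-t)(-\Delta)^s}g\big\Vert^2_{L^2(\omega(t))}\,\mathrm dt+\varepsilon\Vert g\Vert^2_{L^2(\mathbb R^n)}.
\end{multline*}
The whole proof then amounts to characterizing, in terms of \eqref{23112021E1}, the moving control supports for which this estimate holds.

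For the necessary part I would argue exactly as in Section \ref{necessary}: plugging into the estimate above the functions obtained by propagating backwards a suitably rescaled Gaussian centred at an arbitrary point $x_0\in\mathbb R^n$ and letting the concentration parameter tend to $0$, the left-hand side and the error term remain comparable to $\Vert g\Vert^2$ while the observation term forces a lower bound of the form \eqref{23112021E1} on the time-averaged measure of $\omega(t)\cap B(x_0,r)$; this argument is insensitive to the value of $s$. For the sufficient part, assume \eqref{23112021E1} with radius $r>0$ and rate $\gamma\in(0,1]$, and set $T_\gamma=(1-\gamma/2)T$. As in Section \ref{suffcond}, condition \eqref{23112021E1} implies that $\Omega=\{(t,x)\in[0,T_\gamma]\times\mathbb R^n : x\in\omega(t)\}$ is a thick subset of $\mathbb R^{n+1}$, the point of staying strictly away from the final time $T$ being to keep the smoothing constants finite. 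Passing to the Fourier side, the smoothing estimate quoted in the heuristics,
$$\big\Vert\partial^m_t\partial^{\alpha}_x(e^{-t(-\Delta)^s}g)\big\Vert_{L^2(\mathbb R^n)}\le\frac{c^{m+\vert\alpha\vert}}{t^{m+\frac{\vert\alpha\vert}{2s}}}\ m!\ (\alpha!)^{\frac1{2s}}\ \Vert g\Vert_{L^2(\mathbb R^n)},$$
follows by bounding $\sup_{\rho\ge0}\rho^{2sm+\vert\alpha\vert}e^{-t\rho^{2s}}$ with the elementary inequality $\sup_{\rho\ge0}\rho^Ne^{-t\rho}\le(N/(et))^N\le N!\,t^{-N}$. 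Since $s\ge1/2$, the exponent $1/(2s)\le1$, so $(t,x)\mapsto e^{-(T-t)(-\Delta)^s}g(x)$ belongs on $[0,T_\gamma]\times\mathbb R^n$ to a Gevrey class of order at most $1$ in each of its variables, with the explicit dependence on $T-t$ recorded above.

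I would then feed this function and the thick set $\Omega$ into the unique continuation property of Proposition \ref{30112021P1}, obtaining, exactly as in Section \ref{suffcond} and with an explicit constant $K=K_{\varepsilon,\gamma,r,s,T}>0$, a quantitative unique continuation estimate
$$\int_0^{T_\gamma}\big\Vert e^{-(T-t)(-\Delta)^s}g\big\Vert^2_{L^2(\mathbb R^n)}\,\mathrm dt\le K\int_0^{T_\gamma}\big\Vert e^{-(T-t)(-\Delta)^s}g\big\Vert^2_{L^2(\omega(t))}\,\mathrm dt+\varepsilon\Vert g\Vert^2_{L^2(\mathbb R^n)}.$$
Finally, since $\tau\mapsto\Vert e^{-\tau(-\Delta)^s}g\Vert_{L^2(\mathbb R^n)}$ is non-increasing, the map $t\mapsto\Vert e^{-(T-t)(-\Delta)^s}g\Vert_{L^2(\mathbb R^n)}$ is non-decreasing on $[0,T]$, whence $\Vert e^{-T(-\Delta)^s}g\Vert^2_{L^2(\mathbb R^n)}\le T_\gamma^{-1}\int_0^{T_\gamma}\Vert e^{-(T-t)(-\Delta)^s}g\Vert^2_{L^2(\mathbb R^n)}\,\mathrm dt$; combining this monotonicity with the quantitative unique continuation estimate and bounding $\int_0^{T_\gamma}\le\int_0^T$ yields the sought weak observability estimate \eqref{06122021E4}.

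The main obstacle is the unique continuation step: one must check that the smoothing bounds above are precisely of the form required by Proposition \ref{30112021P1} and push the constants through the harmonic-analysis machinery of Section \ref{suffcond} while keeping everything uniform in $g$. This is exactly where the hypothesis $s\ge1/2$ enters essentially — it guarantees spatial analyticity (Gevrey order $\le1$), without which Proposition \ref{30112021P1} does not apply; for $s<1/2$ the functions $e^{-t(-\Delta)^s}g$ are only Gevrey of order $1/(2s)>1$ in the space variable and the strategy breaks down.
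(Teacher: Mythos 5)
Your proposal is correct and follows essentially the same route as the paper, which proves Proposition \ref{07122021P1} exactly by this scheme: reduction to the weak observability estimate \eqref{06122021E4} via Corollary \ref{12012022C1}, the Gaussian propagation argument of Section \ref{necessary} for necessity, and the Fourier-side smoothing bounds of Gevrey order $1/(2s)\le1$ fed into Proposition \ref{30112021P1} and the good/bad cylinder machinery of Section \ref{suffcond}, concluded by monotonicity of $t\mapsto\Vert e^{-(T-t)(-\Delta)^s}g\Vert_{L^2(\mathbb R^n)}$. The only caveat (present identically in the paper's own sketch) is the endpoint $s=1/2$, where the spatial bound has Gevrey order exactly $1$, so one must invoke the quasianalytic case $s=1$ of Proposition \ref{17092021P1} rather than Proposition \ref{30112021P1} verbatim, which yields a constant of the form $e^{K_nB}$ in place of $B^{\frac1{1-s}}$.
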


More generally, the authors conjecture that Theorem \ref{19112021T1} could be extended to the more general class of fractional diffusive equations, where $s\geq1/2$ is a positive real number,
$$\left\{\begin{aligned}
	& \partial_tf(t,x) + (Q_tD_x\cdot D_x)^sf(t,x) = h(t,x)\mathbbm 1_{\omega(t)}(x),\quad t\in[0,T],\ x\in\mathbb R^n, \\
	& f(0,\cdot) = f_0\in L^2(\mathbb R^n).
\end{aligned}\right.$$
This would extend Corollary \ref{06122021C1} for the class of hypoelliptic fractional Ornstein-Uhlenbeck operators
$$(QD_x\cdot D_x)^s + Bx\cdot\nabla_x,\quad x\in\mathbb R^n.$$
However, the strategy implemented in the present paper does not allow to treat this general class of equations, since we can not obtain the smoothing estimates \eqref{06122021E6} in the fractional setting (except for the fractional heat equations as explained, or when $s\in\mathbb N^*$ is a positive integer), due to the fact that \textit{a priori}, there is a lack of regularity with respect to the time variable $t\in[0,T)$ for the associated fractional Fourier multipliers
$$U_s(T,t) = \exp\bigg(-\int_t^T(Q_\tau D_x\cdot D_x)^s\, \mathrm d\tau\bigg).$$

\section{From the hypoelliptic Ornstein-Uhlenbeck equations to non-autonomous diffusive equations}\label{transition}

Let $P$ be the Ornstein-Uhlenbeck operator defined in \eqref{12052021E1}. This section is devoted to check that that the study of the cost-uniform approximate null-controllability of the equation \eqref{25052018E3} can be deduced from the study of a specific time-dependent diffusive equation \eqref{05072021E1}. The strategy is to use the interpretation of the cost-uniform approximate null-controllability in terms of weak observability estimate. In the following, we consider the maximal realization of the operator $P$ on $L^2(\mathbb R^n)$, that is, the operator $P$ is equipped with the domain
$$D(P) = \big\{u\in L^2(\mathbb R^n) : Pu\in L^2(\mathbb R^n)\big\}.$$

First of all, we deduce from the change of unknown $\tilde f(t,x) = e^{\Tr(B)t/2}f(t,x)$ that the cost-uniformly approximate null-controllability of the equation \eqref{25052018E3} is equivalent to the one of the equation
\begin{equation}\label{23072021E1}\tag{$E_{P_{co}}$}
\left\{\begin{array}{l}
	\partial_tf(t,x)+P_{co}f(t,x) = h(t,x)\mathbbm1_{\omega}(x),\quad (t,x)\in(0,+\infty)\times\mathbb R^n,\\[5pt]
	f(0,\cdot) = f_0\in L^2(\mathbb R^n),
\end{array}\right.
\end{equation}
where the operator $P_{co}$ is given by
$$P_{co} = QD_x\cdot D_x + Bx\cdot\nabla_x + \frac12\Tr(B),\quad x\in\mathbb R^n.$$
It follows from Proposition 6 in \cite{MR4041279} that the equation \eqref{23072021E1} is cost-uniformly approximately null-controllable from the control support $\omega\subset\mathbb R^n$ in time $T>0$ if and only if for all $\varepsilon\in(0,1)$, there exists a positive constant $C_{\varepsilon,T}>0$ such that for all $g\in L^2(\mathbb R^n)$,
\begin{equation}\label{06112020E6}
	\big\Vert e^{-TP_{co}^*}g\big\Vert^2_{L^2(\mathbb R^n)}\le C_{\varepsilon, T}\int_0^T\big\Vert e^{-tP_{co}^*}g\big\Vert^2_{L^2(\omega)}\, \mathrm dt 
	+ \varepsilon\Vert g\Vert^2_{L^2(\mathbb R^n)},
\end{equation}
where $P_{co}^*$ stands for the adjoint of the operator $P_{co}$ in $L^2(\mathbb R^n)$. The first author proved in collaboration with J. Bernier in \cite[Corollary 2.2]{AB} that this adjoint is given by
$$P_{co}^* = QD_x\cdot D_x - Bx\cdot\nabla_x - \frac12\Tr(B),\quad x\in\mathbb R^n,$$
with domain
$$D(P_{co}^*) = \big\{u\in L^2(\mathbb R^n) : P_{co}^*u\in L^2(\mathbb R^n)\big\}.$$
Moreover, in the same work \cite[Theorem 1.1]{AB}, we proved that the evolution operators $e^{-tP_{co}^*}$ are given by the following explicit formulas for all $t\geq0$,
\begin{equation}\label{26102022E1}
	e^{-tP_{co}^*} = e^{\frac12\Tr(B)t}\exp\bigg(-\int_0^t\big\vert\sqrt Qe^{-sB^T}D_x\big\vert^2\, \mathrm ds\bigg)\, e^{tBx\cdot\nabla_x}.
\end{equation}
By using that for all $u\in L^2(\mathbb R^n)$,
\begin{equation}\label{01122021E4}
	e^{tBx\cdot\nabla_x}u = u(e^{tB}\,\cdot),\quad\text{and therefore,}\quad\mathscr F(e^{tBx\cdot\nabla_x}u) = e^{-\Tr(B)t}\mathscr F(u)(e^{-tB^T}\,\cdot),
\end{equation}
where $\mathscr F$ denotes the Fourier transform, and a change of variable in time in the integral, we deduce that for all $\xi\in\mathbb R^n$,
\begin{align*}
	\mathscr F(e^{-tP_{co}^*}u)(\xi) & = e^{\frac12\Tr(B)t}\exp\bigg(-\int_0^t\big\vert\sqrt Qe^{-sB^T}\xi\big\vert^2\, \mathrm ds\bigg)\,e^{-\Tr(B)t}\mathscr F(u)(e^{-tB^T}\xi) \\[5pt]
	& = e^{\frac12\Tr(B)t}\exp\bigg(-\int_0^t\big\vert\sqrt Qe^{sB^T}e^{-tB^T}\xi\big\vert^2\, \mathrm ds\bigg)\,e^{-\Tr(B)t}\mathscr F(u)(e^{-tB^T}\xi) \\[5pt]
	& = e^{\frac12\Tr(B)t}\,e^{-\Tr(B)t}\mathscr F\bigg(\exp\bigg(-\int_0^t\big\vert\sqrt Qe^{sB^T}D_x\big\vert^2\, \mathrm ds\bigg)u\bigg)(e^{-tB^T}\xi) \\[5pt]
	& = e^{\frac12\Tr(B)t}\,\mathscr F\bigg(e^{tBx\cdot\nabla_x}\exp\bigg(-\int_0^t\big\vert\sqrt Qe^{sB^T}D_x\big\vert^2\, \mathrm ds\bigg)u\bigg)(\xi).
\end{align*}
As a consequence, formula \eqref{26102022E1} can be rewritten in the following way
\begin{equation}\label{01122021E2}
	e^{-tP_{co}^*} = e^{\frac12\Tr(B)t}e^{tBx\cdot\nabla_x}\,\exp\bigg(-\int_0^t\big\vert\sqrt Qe^{sB^T}D_x\big\vert^2\, \mathrm ds\bigg).
\end{equation}
It is this representation that will be useful for us in the present work. Let us plug the formula \eqref{01122021E2} into the weak observability estimate \eqref{06112020E6}. On the one hand, by using the first equality in \eqref{01122021E4}, we deduce from successive changes of variables that for all $g\in L^2(\mathbb R^n)$,
\begin{align*}
	\int_0^T\big\Vert e^{-tP_{co}^*}g\big\Vert^2_{L^2(\omega)}\,\mathrm dt 
	& = \int_0^T\bigg\Vert e^{\frac12\Tr(B)t}e^{tBx\cdot\nabla_x}\,\exp\bigg(-\int_0^t\big\vert\sqrt Qe^{sB^T}D_x\big\vert^2\, \mathrm ds\bigg)g\bigg\Vert^2_{L^2(\omega)}\,\mathrm dt \\[5pt]
	& = \int_0^T\bigg\Vert\exp\bigg(-\int_0^t\big\vert\sqrt Qe^{sB^T}D_x\big\vert^2\, \mathrm ds\bigg)g\bigg\Vert^2_{L^2(e^{tB}\omega)}\,\mathrm dt \\[5pt]
	& = \int_0^T\bigg\Vert\exp\bigg(-\int_0^{T-t}\big\vert\sqrt Qe^{sB^T}D_x\big\vert^2\, \mathrm ds\bigg)g\bigg\Vert^2_{L^2(e^{(T-t)B}\omega)}\,\mathrm dt \\[5pt]
	& = \int_0^T\bigg\Vert\exp\bigg(-\int_t^T\big\vert\sqrt Qe^{(T-s)B^T}D_x\big\vert^2\, \mathrm ds\bigg)g\bigg\Vert^2_{L^2(e^{(T-t)B}\omega)}\,\mathrm dt. \\[5pt]
	& = \int_0^T\big\Vert U(T,t)g\big\Vert^2_{L^2(\omega(t))}\,\mathrm dt,
\end{align*}
where we set $\omega(t) = e^{(T-t)B}\omega$, and where the Fourier multipliers $U(T,t)$ are the ones defined in \eqref{24112021E1} and associated with the matrices $Q_t$ given for all $t\in[0,T]$ by 
\begin{equation}\label{04012022E1}
	Q_t = e^{(T-t)B}Qe^{(T-t)B^T}.
\end{equation}
On the other hand, since the operators $e^{\Tr(B)t/2}e^{tBx\cdot\nabla_x}$ are unitary on $L^2(\mathbb R^n)$, we get that for all $g\in L^2(\mathbb R^n)$,
$$\big\Vert e^{-TP_{co}^*}g\big\Vert^2_{L^2(\mathbb R^n)} = \bigg\Vert\exp\bigg(-\int_0^T\big\vert\sqrt Qe^{sB^T}D_x\big\vert^2\, \mathrm ds\bigg) g\bigg\Vert^2_{L^2(\mathbb R^n)} = \big\Vert U(T,0)g\big\Vert^2_{L^2(\mathbb R^n)}.$$
As a consequence, the weak observability estimate \eqref{06112020E6} can be rewritten in the following way
$$\big\Vert U(T,0)g\big\Vert^2_{L^2(\mathbb R^n)}\le C_{\varepsilon, T}\int_0^T\big\Vert U(T,t)g\big\Vert^2_{L^2(\omega(t))}\, \mathrm dt 
+ \varepsilon\Vert g\Vert^2_{L^2(\mathbb R^n)},$$
and Corollary \ref{12012022C1} implies that the cost-uniform approximate null-controllability of the Ornstein-Uhlenbeck equation \eqref{25052018E3} is equivalent to the cost-uniform approximate null-controllability of the following non-autonomous equation
$$\left\{\begin{aligned}
	& \partial_tf(t,x) + Q_tD_x\cdot D_xf(t,x) = h(t,x)\mathbbm 1_{\omega(t)}(x),\quad t>0,\ x\in\mathbb R^n, \\
	& f(0,\cdot) = f_0\in L^2(\mathbb R^n).
\end{aligned}\right.$$

It now only remains to check that the Kalman rank condition \eqref{05072021E3} implies the ellipticity condition \eqref{05072021E2} for the above matrices $Q_t$. To that end, let us introduce the vector space $S\subset\mathbb R^n$ defined by
$$S = \bigcap_{j=0}^{+\infty}\Ker\big(\sqrt Q(B^T)^j\big).$$
On the one hand, it can be checked, see e.g. \cite[Lemma 6.1]{AB}, that the Kalman rank condition \eqref{05072021E3} is equivalent to the fact that $S$ is reduced to $\{0\}$. Moreover, it follows from \cite[Proposition 4.2]{A} that there exist some positive constants $c_0\in(0,1)$ and $T_0>0$ such that for all $T\in[0,T_0]$ and $\xi\in\mathbb R^n$,
$$\int_0^T\big\vert\sqrt Qe^{sB^T}\xi\big\vert^2\,\mathrm ds\geq c_0\sum_{k=0}^{k_0}T^{2j+1}\big\vert\sqrt Q(B^T)^j\xi\big\vert^2,$$
where the integer $0\le k_0\le n-1$ is defined by
\begin{equation}\label{06122021E3}
	k_0 = \min\bigg\{k\geq0 : S = \bigcap_{j = 0}^k\Ker(\sqrt Q(B^T)^j)\bigg\}.
\end{equation}
Notice that the fact that $0\le k_0\le n-1$ is a consequence of Cayley-Hamilton's theorem. Therefore, when the Kalman rank condition \eqref{05072021E3} holds, or equivalently, when the vector space $S$ is reduced to $\{0\}$, there exists another positive constant $c_1\in(0,1)$ such that for all $T\in[0,T_0]$ and $\xi\in\mathbb R^n$,
\begin{equation}\label{04012022E2}
	\int_0^T\big\vert\sqrt Qe^{sB^T}\xi\big\vert^2\,\mathrm ds\geq c_1T^{2k_0+1}\vert\xi\vert^2.
\end{equation}
Let us now consider $T>0$ and $t\in[0,T]$. When $0\le T-t\le T_0$, then the estimate \eqref{04012022E2} holds at time $T-t$. In the situation where $T-t>T_0$, we deduce from \eqref{04012022E2} that
$$\int_0^{T-t}\big\vert\sqrt Qe^{sB^T}\xi\big\vert^2\,\mathrm ds\geq\int_0^{T_0}\big\vert\sqrt Qe^{sB^T}\xi\big\vert^2\,\mathrm ds\geq c_1T_0^{2k_0+1}\vert\xi\vert^2 \geq c_1\bigg(\frac{T_0}T\bigg)^{2k_0+1}(T-t)^{2k_0+1}\vert\xi\vert^2.$$
Setting $c_T = c_1\min(1,(T_0/T)^{2k_0+1})\in(0,1)$, we therefore obtain that for all $T>0$ and $t\in[0,T]$,
$$\int_0^{T-t}\big\vert\sqrt Qe^{sB^T}\xi\big\vert^2\,\mathrm ds\geq c_T(T-t)^{2k_0+1}\vert\xi\vert^2.$$
Performing the change of variable $s' = T-s$ in the integral and using the definition \eqref{04012022E1} of the matrices $Q_t$, we therefore deduce that the ellipticity condition \eqref{05072021E2} holds for the family $(Q_t)_{t\in\mathbb R}$ with $k = 2k_0+1\geq1$ (which is uniform with respect to $T>0$).

\section{Necessary condition for approximate null-controllability from moving control supports}\label{necessary}

This section is devoted to the proof of the reciprocal part of Theorem \ref{19112021T1}, which provides a necessary geometric condition on the moving control support $(\omega(t))_{t\in[0,T]}$ so that the time-dependent diffusive equation \eqref{05072021E1} is cost-uniformly approximately null-controllable in time $T>0$ from $(\omega(t))_{t\in[0,T]}$. Notice that we do not make any assumption of regularity in time for the family $(Q_t)_{t\in\mathbb R}$, and we do not assume that the ellipticity assumption \eqref{05072021E2} holds. Precisely, we will establish the following

\begin{thm}\label{19112021T2} Let $T>0$ and $(\omega(t))_{t\in[0,T]}$ be a moving control support. If the evolution equation \eqref{05072021E1} is cost-uniformly approximately null-controllable in time $T$ from $(\omega(t))_{t\in[0,T]}$, then there exist a radius $r>0$ and a rate $\gamma\in(0,1]$ so that
$$\forall x\in\mathbb R^n,\quad\frac1T\int_0^T\Leb\big(\omega(t)\cap B(x,r)\big)\,\mathrm dt\geq\gamma V_r.$$
\end{thm}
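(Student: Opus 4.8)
The strategy is to exploit the equivalence between cost-uniform approximate null-controllability and the weak observability estimate \eqref{06122021E4}, and then to test this estimate against a well-chosen family of highly concentrated initial data — namely Gaussian wave packets whose Fourier transforms are sharply localized at a large frequency $\xi_0$. The point is that for such data the multiplier $U(T,t)$ acts, to leading order, by multiplication by a factor $\exp(-\int_t^T Q_s\xi_0\cdot\xi_0\,\mathrm ds)$, which is essentially constant in space; hence the contribution of the left-hand side of \eqref{06122021E4} is governed by the full $L^2(\mathbb R^n)$-mass, while the right-hand side only sees the mass carried by $\omega(t)$ at time $t$. A failure of the integral thickness condition translates into the existence of points $x_N$ and radii $r\to\infty$ along which $\frac1T\int_0^T\Leb(\omega(t)\cap B(x_N,r))\,\mathrm dt$ is arbitrarily small relative to $V_r$; translating the Gaussian to be centered at such an $x_N$ and letting its spatial width match $r$ forces the observability constant to blow up, a contradiction.

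First I would write down, for parameters $\lambda>0$ (spatial width), $\xi_0\in\mathbb R^n$ (carrier frequency) and $x_0\in\mathbb R^n$ (center), the coherent state $g_{x_0,\xi_0,\lambda}(x)=(\text{normalizing constant})\,e^{i x\cdot\xi_0}e^{-|x-x_0|^2/(2\lambda^2)}$, normalized in $L^2(\mathbb R^n)$, and compute $U(T,t)g$ explicitly on the Fourier side: $\widehat{U(T,t)g}(\xi)=\exp(-\int_t^T Q_s\xi\cdot\xi\,\mathrm ds)\,\widehat g(\xi)$, where $\widehat g$ is a Gaussian of width $\sim1/\lambda$ centered at $\xi_0$. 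For $\lambda$ large this frequency Gaussian is narrow, so on its support $\exp(-\int_t^T Q_s\xi\cdot\xi\,\mathrm ds)$ is comparable to $M_t:=\exp(-\int_t^T Q_s\xi_0\cdot\xi_0\,\mathrm ds)$ uniformly — here one needs only a crude bound, e.g. $\sup_{s\in[0,T]}\|Q_s\|<\infty$, which holds by continuity (no ellipticity needed). Consequently $U(T,t)g$ is, up to multiplicative constants bounded above and below independently of $x_0$ and $t$, equal to $M_t$ times a spatial Gaussian of width $\sim\lambda$ centered near $x_0$. In particular $\|U(T,t)g\|_{L^2(\mathbb R^n)}^2\gtrsim M_t^2$ and $\|U(T,t)g\|_{L^2(\omega(t))}^2\lesssim M_t^2\,\frac{1}{V_\lambda}\Leb(\omega(t)\cap B(x_0,\lambda))+(\text{Gaussian tail})$, where the tail is negligible for $\lambda$ large.

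Plugging these into \eqref{06122021E4} with a fixed $\varepsilon\in(0,1)$ gives, after dividing by $\|U(T,0)g\|^2\gtrsim M_0^2$ and using $M_t\le M_0\,e^{C_T|\xi_0|^2 T}$ (so the ratios $M_t/M_0$ are bounded below on $[0,T_0']$ for a suitable subinterval, or more cleanly one arranges $\xi_0$ bounded and uses only $M_t\asymp M_0$ up to a $T$-dependent constant), an inequality of the shape
\begin{equation*}
	1\lesssim \frac{C_{\varepsilon,T}}{V_\lambda}\int_0^T\Leb\big(\omega(t)\cap B(x_0,\lambda)\big)\,\mathrm dt + \varepsilon + o_\lambda(1),
\end{equation*}
with implied constants independent of $x_0$ and $\lambda$. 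Choosing $\varepsilon$ small enough (once and for all), this yields a lower bound $\frac1{V_\lambda}\int_0^T\Leb(\omega(t)\cap B(x_0,\lambda))\,\mathrm dt\ge c(\varepsilon,T)>0$ for all $x_0\in\mathbb R^n$, provided $\lambda$ is fixed large enough that the $o_\lambda(1)$ term is controlled; rewriting with $r=\lambda$ and $\gamma=\min(1,c(\varepsilon,T)\cdot\frac{|B(0,1)|}{\text{(normalization)}})$ gives exactly the claimed integral thickness condition. The main obstacle, and the place requiring care, is making the two-sided comparison between $U(T,t)g$ and a genuine Gaussian uniform in the center $x_0$ and locally uniform in $t\in[0,T)$ — i.e. controlling how the factor $\exp(-\int_t^T Q_s\xi\cdot\xi\,\mathrm ds)$ deforms the Gaussian profile near $\xi_0$; since we only want a necessary condition and have no ellipticity lower bound, one must be content with the elementary upper bound $\|Q_s\|\le s_T$ and must check that the $\lambda$-dependent error terms (frequency spread and spatial tails) can be made uniformly small before the final choice of $\varepsilon$ — a standard but slightly delicate wave-packet estimate.
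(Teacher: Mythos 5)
Your overall strategy is the same as the paper's: pass to the weak observability estimate \eqref{06122021E4}, test it with Gaussians centred at an arbitrary point $x_0$, bound the localized term by $\Leb(\omega(t)\cap B(x_0,r))$ via an $L^\infty$ bound on $U(T,t)g$, and treat the remainder as a tail. However, two points in your parameter bookkeeping do not work as written. First, the emphasis on a \emph{large} carrier frequency $\xi_0$ is self-defeating: the left-hand side of \eqref{06122021E4} is then of size $M_0^2=\exp(-2\int_0^T Q_s\xi_0\cdot\xi_0\,\mathrm ds)$, which can be far smaller than the term $\varepsilon\Vert g\Vert^2_{L^2(\mathbb R^n)}$, so the estimate becomes vacuous; your displayed inequality with ``$+\varepsilon$'' after dividing by $\Vert U(T,0)g\Vert^2$ is only legitimate when $M_0\gtrsim 1$, i.e. essentially when $\xi_0$ is bounded. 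Your parenthetical hedge (``arrange $\xi_0$ bounded'') is the correct fix, and the paper simply takes $\xi_0=0$, where $K(T,0,0)=1$ and the lower bound $\Vert U(T,0)g\Vert^2>\varepsilon\Vert g\Vert^2$ follows from dominated convergence as the width grows; no two-sided comparison of $U(T,t)g$ with ``$M_t$ times a Gaussian'' is available or needed --- only the upper bound $\vert K(T,t,\xi)\vert\le1$, which gives $\Vert U(T,t)g\Vert_{L^\infty}\lesssim\lambda^{-n/2}$ for the localized term.

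The second and more serious issue is the identification $r=\lambda$ of the ball radius with the Gaussian width, together with treating the tail as $o_\lambda(1)$. A normalized Gaussian of width $\lambda$ carries a fixed, dimension-dependent fraction of its $L^2$ mass outside $B(x_0,\lambda)$, and this fraction does not decrease as $\lambda\to\infty$; in the observability estimate it is multiplied by the constant $C_{\varepsilon,T}$, which is given by the controllability hypothesis and may be arbitrarily large, so the resulting inequality $1\lesssim C_{\varepsilon,T}(\cdots)+\varepsilon+O(C_{\varepsilon,T})$ yields no lower bound on $\int_0^T\Leb(\omega(t)\cap B(x_0,\lambda))\,\mathrm dt$. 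The parameters must be decoupled: fix $\varepsilon$ and then the width $\lambda$ once and for all (just large enough that the left-hand side beats $\varepsilon\Vert g\Vert^2$), and only afterwards choose $r$, depending on $C_{\varepsilon,T}$ and $\lambda$ but not on $x_0$, so large that $C_{\varepsilon,T}$ times the tail $\int_0^T\int_{\vert x\vert>r}\vert\mathscr F^{-1}_\xi(K(T,t,\xi)e^{-\lambda^2\vert\xi\vert^2/2})(x)\vert^2\,\mathrm dx\,\mathrm dt$ is at most half of the left-hand margin (this is exactly the paper's dominated-convergence step; note that with the multiplier $K$ present the tail is not an explicit Gaussian integral). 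With these two repairs your argument coincides with the paper's proof; the contradiction framing versus the direct uniform lower bound is only cosmetic.
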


\begin{proof} According to Corollary \ref{12012022C1}, assuming that the equation \eqref{05072021E1} is cost-uniformly approximately null-controllable in time $T$ from $(\omega(t))_{t\in[0,T]}$ is equivalent to assuming that for all $\varepsilon\in(0,1)$, there exists a positive constant $C_{\varepsilon,T}>0$ such that for all $g\in L^2(\mathbb R^n)$,
\begin{equation}\label{19122019E4}
	\big\Vert U(T,0)g\big\Vert^2_{L^2(\mathbb R^n)}\le C_{\varepsilon,T}\int_0^T\big\Vert U(T,t)g\big\Vert^2_{L^2(\omega(t))}\,\mathrm dt 
	+ \varepsilon\Vert g\Vert^2_{L^2(\mathbb R^n)},
\end{equation}
where $U(T,t)$ denotes the Fourier multiplier \eqref{24112021E1}. The strategy, which is very classical, consists in applying this observability estimate for well-chosen functions $g\in L^2(\mathbb R^n)$. In the following, we fix $\varepsilon\in(0,1)$ and we consider the associated positive constant $C_{\varepsilon,T}>0$. Also fixing $x_0\in\mathbb R^n$ and considering $l\gg1$ whose value will be adjusted later, we define the Gaussian function $g_l$ defined by
$$\forall x\in\mathbb R^n,\quad g_l(x) = \frac1{l^n}\exp\bigg(\frac{\vert x-x_0\vert^2}{2l^2}\bigg).$$
Classical results concerning Fourier transform of Gaussian functions show that
\begin{equation}\label{06112020E2}
	\forall\xi\in\mathbb R^n,\quad\widehat g_l(\xi) = (2\pi)^{n/2}\exp\bigg(-ix_0\cdot\xi-\frac{l^2\vert\xi\vert^2}2\bigg).
\end{equation}
In the following, we will use the notation
$$K(t,t_0,\xi) = \exp\bigg(-\int_{t_0}^tQ_s\xi\cdot\xi\,\mathrm ds\bigg).$$
On the one hand, it follows from Plancherel's theorem that the left-hand side of the inequality \eqref{19122019E4} applied to the functions $g_l$ is a positive constant independent on the point $x_0$, denoted $\delta_l>0$ in the following and given by
\begin{align}\label{19122019E3}
	\delta_l = \big\Vert U(T,0)g_l\big\Vert^2_{L^2(\mathbb R^n)} 
	& = \int_{\mathbb R^n}\big\vert e^{-ix_0\cdot\xi}K(T,0,\xi)e^{-l^2\vert\xi\vert^2/2}\big\vert^2\,\mathrm d\xi \\
	& = \frac1{l^n}\int_{\mathbb R^n}\big\vert K(T,0,\xi/l)e^{-\vert\xi\vert^2/2}\big\vert^2\,\mathrm d\xi>0. \nonumber
\end{align}
On the other hand, we get that the $L^2$-norm of the function $g_l$ also does not depend on the point $x_0\in\mathbb R^n$ and is given by the following Gaussian integral
\begin{equation}\label{27012021E1}
	\Vert g_l\Vert^2_{L^2(\mathbb R^n)} = \frac1{l^{2n}}\int_{\mathbb R^n}e^{-\vert x\vert^2/l^2}\,\mathrm dx = \bigg(\frac{\pi}{l^2}\bigg)^{n/2}.
\end{equation}
Let us check that the large positive parameter $l\gg1$ can be adjusted so that $\delta_l - \varepsilon\Vert g_l\Vert^2_{L^2(\mathbb R^n)}>0$, that is, by \eqref{19122019E3} and \eqref{27012021E1},
\begin{equation}\label{06102020E1}
	\int_{\mathbb R^n}\big\vert K(T,0,\xi/l)e^{-\vert\xi\vert^2/2}\big\vert^2\,\mathrm d\xi>\varepsilon\pi^{n/2}.
\end{equation}
Since the function $K(T,0,\cdot)$ is bounded and continuous, the dominated convergence theorem together with the fact that $\varepsilon\in(0,1)$ and $K(T,0,0) = 1$ implies that
\begin{align*}
	\lim_{l\rightarrow+\infty}\int_{\mathbb R^n}\big\vert K(T,0,\xi/l)e^{-\vert\xi\vert^2/2}\big\vert^2\,\mathrm d\xi
	& = K(T,0,0)^2\int_{\mathbb R^n}\big\vert e^{-\vert\xi\vert^2/2}\big\vert^2\,\mathrm d\xi \\[5pt]
	& = K(T,0,0)^2\pi^{n/2} >\varepsilon\pi^{n/2}. 
\end{align*}
The parameter $l\gg1$ can therefore be adjusted so that \eqref{06102020E1} holds. The value of $l\gg1$ is now fixed. We therefore deduce from \eqref{19122019E4} and \eqref{06102020E1} that
\begin{equation}\label{16082020E1}
	M_l\le C_{\varepsilon,T}\int_0^T\big\Vert U(T,t)g_l\big\Vert^2_{L^2(\omega(t))}\,\mathrm dt,
\end{equation}
with
$$M_l = \delta_l - \varepsilon\Vert g_l\Vert^2_{L^2(\mathbb R^n)}>0.$$
Moreover, by introducing $\mathscr F_{\xi}^{-1}$ the partial inverse Fourier transform with respect to the variable $\xi\in\mathbb R^n$ and using \eqref{06112020E2}, the right-hand side of this inequality (up to the constant $C_{\varepsilon,T}$) writes
\begin{align*}
	\int_0^T\big\Vert U(T,t)g_l\big\Vert^2_{L^2(\omega(t))}\ \mathrm dt 
	& = (2\pi)^n \int_0^T\int_{\omega(t)}\big\vert\mathscr F^{-1}_{\xi}(e^{-ix_0\cdot\xi}K(T,t,\xi)e^{-l^2\vert\xi\vert^2/2})(x)\big\vert^2\,\mathrm dx\,\mathrm dt \\[5pt]
	& = (2\pi)^n \int_0^T\int_{\omega(t)}\big\vert\mathscr F^{-1}_{\xi}(K(T,t,\xi)e^{-l^2\vert\xi\vert^2/2})(x-x_0)\big\vert^2\,\mathrm dx\,\mathrm dt \\[5pt]
	& = (2\pi)^n \int_0^T\int_{\omega(t)-x_0}\big\vert\mathscr F^{-1}_{\xi}(K(T,t,\xi)e^{-l^2\vert\xi\vert^2/2})(x)\big\vert^2\,\mathrm dx\,\mathrm dt.
\end{align*}
Given $r>0$ a positive radius whose value will be chosen later, we split the previous integral in two parts and obtain the following estimate:
\begin{multline}\label{23082018E6}
	\int_0^T\big\Vert U(T,t)g_l\big\Vert^2_{L^2(\omega(t))}\,\mathrm dt \\[5pt]
	\le (2\pi)^n\int_0^T\int_{(\omega(t)-x_0)\cap B(0,r)}\big\vert\mathscr F^{-1}_{\xi}(K(T,t,\xi)e^{-l^2\vert\xi\vert^2/2})(x)\big\vert^2\,\mathrm dx\,\mathrm dt \\[5pt]
	+ (2\pi)^n\int_0^T\int_{\vert x\vert>r}\big\vert\mathscr F^{-1}_{\xi}(K(T,t,\xi)e^{-l^2\vert\xi\vert^2/2})(x)\big\vert^2\,\mathrm dx\,\mathrm dt.
\end{multline}
Now, we study one by one the two integrals appearing in the right-hand side of \eqref{23082018E6}. First, notice that for all $0\le t\le T$,
\begin{align*}
	\big\Vert\mathscr F^{-1}_{\xi}(K(T,t,\xi)e^{-l^2\vert\xi\vert^2/2})\big\Vert_{L^{\infty}(\mathbb R^n)}
	& \le\frac1{(2\pi)^n}\big\Vert K(T,t,\xi)e^{-l^2\vert\xi\vert^2/2}\big\Vert_{L^1(\mathbb R^n)} \\[5pt]
	& \le\frac1{(2\pi)^n}\big\Vert e^{-l^2\vert\xi\vert^2/2}\big\Vert_{L^1(\mathbb R^n)} \\[5pt]
	& = \frac1{(2\pi)^n}\bigg(\frac{2\pi}{l^2}\bigg)^{n/2}.
\end{align*}
It therefore follows from the invariance by translation of the Lebesgue measure that
\begin{multline}\label{23082018E7}
	(2\pi)^n\int_0^T\int_{(\omega(t)-x_0)\cap B(0,r)}\big\vert\mathscr F^{-1}_{\xi}(K(T,t,\xi)e^{-l^2\vert\xi\vert^2/2})(x)\big\vert^2\,\mathrm dx\,\mathrm dt  \\[5pt]
	\le \frac1{l^{2n}}\int_0^T\Leb\big((\omega(t)-x_0)\cap B(0,r)\big)\,\mathrm dt = \frac1{l^{2n}}\int_0^T\Leb\big(\omega(t)\cap B(x_0,r)\big)\,\mathrm dt.
\end{multline}
In order to control the second integral, we use the dominated convergence theorem which justifies the following convergence
$$\int_0^T\int_{\vert x\vert>r}\big\vert\mathscr F^{-1}_{\xi}(K(T,t,\xi)e^{-l^2\vert\xi\vert^2/2})(x)\big\vert^2\,\mathrm dx\,\mathrm dt \underset{r\rightarrow+\infty}{\rightarrow}0,$$
since
$$\mathscr F^{-1}_{\xi}(K(T,t,\xi)e^{-l^2\vert\xi\vert^2/2})\in L^2([0,T]\times\mathbb R^n).$$
Thus, we can choose the radius $r\gg1$ large enough so that
\begin{equation}\label{19122019E6}
	(2\pi)^nC_{\varepsilon,T}\int_0^T\int_{\vert x\vert>r}\big\vert\mathscr F^{-1}_{\xi}(K(T,t,\xi)e^{-l^2\vert\xi\vert^2/2})(x)\big\vert^2\,\mathrm dx\,\mathrm dt\le \frac{M_l}2.
\end{equation}
Gathering \eqref{16082020E1}, \eqref{23082018E6}, \eqref{23082018E7} and \eqref{19122019E6}, we obtain the following expected estimate
$$\forall x_0\in\mathbb R^n,\quad \frac{M_l}2 \le \frac1{l^{2n}}\int_0^T\Leb\big(\omega(t)\cap B(x_0,r)\big)\,\mathrm dt.$$
This ends the proof of Theorem \ref{19112021T2}.
\end{proof}

\section{Smoothing properties in time and space}\label{smooth}

In order to prove that when that ellipticity condition \eqref{05072021E2} holds for some positive time $T>0$ and that the moving control support $(\omega(t))_{t\in[0,T]}$ satisfies the geometric condition \eqref{23112021E1}, then the evolution equation \eqref{05072021E1} is cost-uniformly approximately null-controllable in time $T$, we need to study the regularity in time and space of the solutions of the adjoint system of \eqref{05072021E1}, which is the retrograde equation given by
$$\left\{\begin{aligned}
	& \partial_tg(t,x) - Q_tD_x\cdot D_xg(t,x) = 0,\quad (t,x)\in[0,T)\times\mathbb R^n, \\
	& g(T,\cdot) = g_0\in L^2(\mathbb R^n).
\end{aligned}\right.$$
Precisely, in this section, still using the notation $U(T,t)$ to denote the Fourier multiplier \eqref{24112021E1}, we prove the following

\begin{thm}\label{06072021T1} Let $T>0$ be a positive time. Assume that the ellipticity condition \eqref{05072021E2} holds and that the matrices $Q_t$ depend analytically on the time variable $t\in\mathbb R$. Then, there exists a positive constant $c_0>1$ not depending on the time $T$ such that for all $m\geq0$, $\alpha\in\mathbb N^n$, $0\le t<T$ and $g\in L^2(\mathbb R^n)$,
\begin{equation}\label{29072021E1}
	\big\Vert\partial^m_t\partial^{\alpha}_x(U(T,t)g)\big\Vert_{L^2(\mathbb R^n)}\le c_0^{m+\vert\alpha\vert}\ \bigg(\frac{\max(1,T)s_T}{c_T(T-t)}\bigg)^{\frac k2(2m+\vert\alpha\vert)}\ m!\ \sqrt{\alpha!}\ \Vert g\Vert_{L^2(\mathbb R^n)},
\end{equation}
where $c_T\in(0,1)$ and $k\geq1$ are the ones appearing in \eqref{05072021E2}, and $s_T>1$ is a positive constant related to the analyticity property of the family $(Q_t)_{t\in\mathbb R}$ on $(-T,T)$.
\end{thm}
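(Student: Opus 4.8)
The plan is to work entirely on the Fourier side. Since $U(T,t)$ is the Fourier multiplier with symbol $K(T,t,\xi) = \exp\bigl(-\int_t^T Q_s\xi\cdot\xi\,\mathrm ds\bigr)$, Plancherel reduces the estimate \eqref{29072021E1} to a pointwise bound, uniform in $\xi\in\mathbb R^n$, on the derivatives $\partial_t^m\partial_\xi^\beta$ of this symbol, combined with the elementary fact that multiplication by $\xi^\alpha$ on the Fourier side is $\partial_x^\alpha$ up to constants. So the real object to control is the scalar function $(t,\xi)\mapsto K(T,t,\xi)$ and its space-time derivatives. I would first record that $\partial_t\log K(T,t,\xi) = Q_t\xi\cdot\xi \geq 0$ and, more importantly, that the ellipticity condition \eqref{05072021E2} gives the lower bound $-\log K(T,t,\xi) = \int_t^T Q_s\xi\cdot\xi\,\mathrm ds \geq c_T(T-t)^k|\xi|^2$; this is exactly the quantity that will kill the polynomial growth coming from the $\xi$-derivatives.

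The core computation is then to differentiate $K$ in $t$ and $\xi$. Each $\xi$-derivative either pulls down a factor like $Q_s\xi$ integrated in $s$ (bounded by $C(T-t)|\xi|$ after using analyticity/boundedness of $Q_s$ on $[0,T]$) or differentiates such a factor producing a constant-size matrix; each $t$-derivative acts on the integral $\int_t^T Q_s\xi\cdot\xi\,\mathrm ds$, and here the analyticity of $t\mapsto Q_t$ is essential: it provides, via Cauchy estimates on a complex neighborhood of $(-T,T)$ of size controlled by $s_T$, bounds of the form $\|\partial_t^j Q_t\|\leq c\, s_T^j\, j!$. Combining these through the Faà di Bruno / Leibniz expansion of $\partial_t^m\partial_\xi^\beta \exp(-\psi)$, where $\psi(t,\xi)=\int_t^T Q_s\xi\cdot\xi\,\mathrm ds$, one gets a sum over partitions whose generic term is a product of derivatives of $\psi$ times $K$ itself; the $K$ factor retains the full Gaussian decay $e^{-c_T(T-t)^k|\xi|^2}$. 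The standard inequality $|\xi|^N e^{-a|\xi|^2}\leq (N/(2ea))^{N/2}$ converts each surplus power of $|\xi|$ into a factor $(T-t)^{-k/2}$ times a combinatorial constant, and bookkeeping the $m!$, $\sqrt{\alpha!}$, and the powers of $\max(1,T)s_T/c_T$ finishes the pointwise estimate; integrating against the $L^2$ norm of $\widehat g$ then yields \eqref{29072021E1}.

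The step I expect to be the main obstacle — and the one demanding care rather than cleverness — is the combinatorial accounting in the Faà di Bruno expansion: one must check that the product over a partition of $\{1,\dots,m\}$ (for the $t$-derivatives) interleaved with the Leibniz distribution of the $\beta$ space derivatives produces, after using $\|\partial_t^j Q_t\|\lesssim s_T^j j!$, a total that is bounded by $c_0^{m+|\alpha|} m!\sqrt{\alpha!}$ times the stated power of $1/(T-t)$, with a constant $c_0$ that does \emph{not} depend on $T$. The delicate points are: tracking that the number of surplus $|\xi|$-factors is exactly $2m+|\alpha|$ minus the number already absorbed (so the exponent of $(T-t)^{-1}$ comes out to $\tfrac{k}{2}(2m+|\alpha|)$, as the $t$-derivatives each cost $k$ in the exponent of $(T-t)$ while $\xi$-derivatives cost $k/2$); keeping the $s_T$-dependence linear in the number of $t$-derivatives; and verifying that the partition-counting constants (Bell-number-type sums) are absorbed into $c_0^{m}$ rather than into $(m!)^2$. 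Once the dependence on $T$ is isolated into the explicit factor $\bigl(\max(1,T)s_T/(c_T(T-t))\bigr)^{k(2m+|\alpha|)/2}$, the remaining constant is genuinely dimensionless and the theorem follows.
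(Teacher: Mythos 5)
Your overall strategy coincides with the paper's: pass to the Fourier side, use the ellipticity bound $A_t(\xi)=\int_t^T Q_s\xi\cdot\xi\,\mathrm ds\geq c_T(T-t)^k\vert\xi\vert^2$, expand the time derivatives of $e^{-A_t(\xi)}$ by Fa\`a di Bruno together with the analyticity bounds $\Vert\partial_t^jQ_t\Vert\le s_T^{1+j}\,j!$, and absorb the surplus powers of $\vert\xi\vert$ through $x^pe^{-cx^q}\le(p/(ecq))^{p/q}$, with a final case distinction $T-t\le1$ versus $T-t>1$ producing the factor $\max(1,T)$. Two points, however, need fixing. First, your reduction is muddled: after Plancherel the quantity to bound is exactly the one in \eqref{06072021E3}, namely $\xi^{\alpha}\partial_t^m(e^{-A_t(\xi)})$; the spatial derivatives enter only as the monomial $\xi^{\alpha}$, and no $\partial_{\xi}^{\beta}$ derivatives of the symbol occur anywhere. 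The ``core computation'' you describe --- a mixed Fa\`a di Bruno/Leibniz expansion of $\partial_t^m\partial_{\xi}^{\beta}\exp(-\psi)$, with $\xi$-derivatives pulling down factors $\int_t^TQ_s\xi\,\mathrm ds$ --- is therefore not the relevant object, and bounding it would not by itself control $\xi^{\alpha}\partial_t^m e^{-A_t(\xi)}$. The repair is easy (the correct quantity is simpler than the one you propose, requiring Fa\`a di Bruno in $t$ only), but as written the plan aims at the wrong target for the space part.

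Second, the step you explicitly defer as ``bookkeeping'' of the partition sums is where the actual content lies, and you do not carry it out. In the paper this is resolved by the exact identity of Lemma \ref{06072021L1},
$$\sum_{l_1+2l_2+\cdots+ml_m=m}\frac{a^{l_1+\cdots+l_m}}{1^{l_1}l_1!\cdots m^{l_m}l_m!}=\frac1{m!}\prod_{j=0}^{m-1}(a+j),$$
applied with $a=\vert\xi\vert^2$, followed by $\prod_{j=0}^{m-1}(\vert\xi\vert^2+j)\le2^{m-1}(\vert\xi\vert^{2m}+m^m)$, $m^m\le e^mm!$ and $\vert\alpha\vert!\le n^{\vert\alpha\vert}\alpha!$; this is precisely what keeps the partition-counting within a factor $c_0^{m+\vert\alpha\vert}$ independent of $T$ and prevents a spurious extra factorial. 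A cruder route (bounding the number of solutions of $l_1+2l_2+\cdots+ml_m=m$ and the worst term separately) can also be made to work, but some such argument must be supplied: the claim that the constant in \eqref{29072021E1} is $c_0^{m+\vert\alpha\vert}\,m!\,\sqrt{\alpha!}$ and not worse is exactly the point at issue, so your proposal stops short of a proof at its own acknowledged crux.
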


The rest of this section is devoted to the proof of this result. 
We first use Plancherel's theorem to get that
\begin{equation}\label{06072021E3}
	\big\Vert\partial^m_t\partial^{\alpha}_x(U(T,t)g)\big\Vert_{L^2(\mathbb R^n)} = (2\pi)^{-n/2}\big\Vert\xi^{\alpha}\partial^m_t(e^{-A_t(\xi)})\widehat g\big\Vert_{L^2(\mathbb R^n)},
\end{equation}
where we set
$$A_t(\xi) = \int_t^TQ_s\xi\cdot\xi\,\mathrm ds.$$
Now, we therefore have to estimate the time-derivatives of the function $\exp\circ(-A_t)$. To that end, we shall use Fa\`a di Bruno's formula in one variable, see e.g. Formula (4.3.2) page 304 in \cite{MR2599384}, whose statement is the following: Given $U,V,W\subset\mathbb R$ some open sets, and $f:U\rightarrow V$, $g:V\rightarrow W$ some smooth functions, we have that for all integer $m\geq0$,
\begin{equation}\label{23112021E3}
	\frac{(g\circ f)^{(m)}}{m!} = \sum_{l_1+2l_2+\cdots+ml_m = m}\frac{g^{(l_1+\cdots+l_m)}\circ f}{l_1!\cdots l_m!}\prod_{j=1}^m\bigg(\frac{f^{(j)}}{j!}\bigg)^{l_j}.
\end{equation}
We get from Fa\`a di Bruno's formula that for all $t>0$, $\xi\in\mathbb R^n$ and $m\geq1$,
\begin{align*}
	\frac{\partial^m_t(e^{-A_t(\xi)})}{m!} & = e^{-A_t(\xi)}\sum_{l_1+2l_2+\cdots+ml_m = m}\frac1{l_1!\cdots l_m!}\prod_{j=1}^m\bigg(\frac{\partial^j_t(-A_t(\xi))}{j!}\bigg)^{l_j} \\[5pt]
	& = e^{-A_t(\xi)}\sum_{l_1+2l_2+\cdots+ml_m = m}\frac1{l_1!\cdots l_m!}\prod_{j=1}^m\bigg(\frac{(\partial^{j-1}_tQ_t)\xi\cdot\xi}{j!}\bigg)^{l_j}.
\end{align*}
Moreover, the matrices $Q_t$ are assumed to depend analytically on the time-variable $t\in\mathbb R$, so there exists a positive constant $s_T>1$ such that for all $t\in(-T,T)$ and $m\geq0$,
$$\big\Vert\partial^m_tQ_t\big\Vert\le s^{1+m}_Tm!,$$
where $\Vert\cdot\Vert$ stands for the norm induced by the canonical Euclidean norm on $\mathbb R^n$. As a consequence of this estimate, we obtain that for all $0<t<T$, $\xi\in\mathbb R^n$ and $m\geq1$, 
\begin{align}\label{23112021E4}
	\bigg\vert\frac{\partial^m_t(e^{-A_t(\xi)})}{m!}\bigg\vert & \le e^{-A_t(\xi)}\sum_{l_1+2l_2+\cdots+ml_m = m}\frac1{l_1!\cdots l_m!}\prod_{j=1}^m\bigg(\frac{s^j_T(j-1)!\vert\xi\vert^2}{j!}\bigg)^{l_j} \\[5pt]
	& = e^{-A_t(\xi)}\sum_{l_1+2l_2+\cdots+ml_m = m}\frac{s^{l_1+2l_2+\cdots+ml_m}_T\vert\xi\vert^{2(l_1+\cdots+l_m)}}{1^{l_1}l_1!\cdots m^{l_m}l_m!} \nonumber \\[5pt]
	& = s^m_Te^{-A_t(\xi)}\sum_{l_1+2l_2+\cdots+ml_m = m}\frac{\vert\xi\vert^{2(l_1+\cdots+l_m)}}{1^{l_1}l_1!\cdots m^{l_m}l_m!}. \nonumber
\end{align}
In order to estimate the above sum, we shall use the following lemma, which is quite a straightforward consequence of Fa\`a di Bruno's formula.

\begin{lem}\label{06072021L1} We have that for all non-negative real number $a\geq0$,
\begin{equation}\label{23112021E2}
	\sum_{l_1+2l_2+\cdots+ml_m = m}\frac{a^{l_1+\cdots+l_m}}{1^{l_1}l_1!\cdots m^{l_m}l_m!} = \frac1{m!}\prod_{j=0}^{m-1}(a+j).
\end{equation}
\end{lem}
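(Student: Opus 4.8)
The plan is to recognize the left-hand side of \eqref{23112021E2} as exactly the output of Fa\`a di Bruno's formula \eqref{23112021E3} applied to a well-chosen composition, and then to evaluate that composition in closed form. Concretely, I would take $f(t) = -\log(1-t)$ and $g(y) = e^{ay}$, both analytic on a neighborhood of $0$ (for $f$, on the disc $|t|<1$), and consider $h = g\circ f$ near $t = 0$.

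First I would record the relevant derivatives at the base point. Since $f(t) = \sum_{j\ge1} t^j/j$, one has $f(0) = 0$ and $f^{(j)}(0) = (j-1)!$ for every $j\ge1$, so that $f^{(j)}(0)/j! = 1/j$; on the other side $g^{(k)}(0) = a^k$ for every $k\ge0$. Substituting these values into \eqref{23112021E3} evaluated at $t=0$ gives
$$\frac{h^{(m)}(0)}{m!} = \sum_{l_1+2l_2+\cdots+ml_m=m}\frac{a^{l_1+\cdots+l_m}}{l_1!\cdots l_m!}\prod_{j=1}^m\frac1{j^{l_j}} = \sum_{l_1+2l_2+\cdots+ml_m=m}\frac{a^{l_1+\cdots+l_m}}{1^{l_1}l_1!\cdots m^{l_m}l_m!},$$
which is precisely the left-hand side of \eqref{23112021E2}.

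It then remains to compute $h^{(m)}(0)$ directly. Here $h(t) = e^{-a\log(1-t)} = (1-t)^{-a}$, hence $h^{(m)}(t) = a(a+1)\cdots(a+m-1)(1-t)^{-a-m}$, so that $h^{(m)}(0) = \prod_{j=0}^{m-1}(a+j)$. Dividing by $m!$ yields the right-hand side of \eqref{23112021E2}, which concludes the proof. The degenerate cases are consistent with the formula: both sides equal $1$ when $m=0$, and both vanish when $a=0$ and $m\ge1$.

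I do not anticipate any genuine obstacle here: the only real content is guessing the pair $(f,g)$, after which everything reduces to a one-line verification. The single point requiring a bit of care is to work on the disc $|t|<1$, where $f$ is analytic, so that Fa\`a di Bruno's formula \eqref{23112021E3} applies legitimately; and to note that the identity, once proved for $a\ge0$, is in fact a polynomial identity in $a$ valid for all real $a$, which is all we need when it is invoked with $a = \vert\xi\vert^2$ in \eqref{23112021E4}.
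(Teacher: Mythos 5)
Your proof is correct and follows essentially the same route as the paper: both compute the $m$-th derivative of $(1-t)^{-a}$ at $0$ once via Fa\`a di Bruno's formula \eqref{23112021E3} and once directly, the only cosmetic difference being that you place the parameter $a$ in the outer function $e^{ay}$ while the paper puts it in the inner function $-a\ln(1-x)$. Nothing further is needed.
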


\begin{proof} Let $a\geq0$ be a fixed non-negative real number. We consider the function 
$$F:x\in(-1,1) \mapsto -a\ln(1-x).$$ 
The strategy to establish the formula \eqref{23112021E2} is to compute the derivatives of the function $\exp\circ F$ at $0$ with two different methods. On the one hand, we have that this function is given by
$$\forall x\in(-1,1),\quad (\exp\circ F)(x) = \frac1{(1-x)^a} = 1 + \sum_{m=1}^{+\infty}(-1)^m\binom{-a}mx^m,$$
where
$$\binom{-a}m = \frac{-a(-a-1)\cdots(-a-(m-1))}{m!} = \frac{(-1)^m}{m!}\prod_{j=0}^{m-1}(a+j).$$
We therefore deduce that 
$$\forall m\geq1,\quad (\exp\circ F)^{(m)}(0) = \prod_{j=0}^{m-1}(a+j).$$
On the other hand, let us recall that the derivatives of the function $F$ are given by 
$$\forall j\geq1,\quad F^{(j)}(0) = a(j-1)!.$$
As a consequence of this formula and \eqref{23112021E3}, we get that for all $m\geq1$,
\begin{align*}
	\frac{(\exp\circ F)^{(m)}(0)}{m!} & = \sum_{l_1+2l_2+\cdots+ml_m = m}\frac{e^{F(0)}}{l_1!\cdots l_m!}\prod_{j=1}^m\bigg(\frac{F^{(j)}(0)}{j!}\bigg)^{l_j} \\[5pt]
	& = \sum_{l_1+2l_2+\cdots+ml_m = m}\frac1{l_1!\cdots l_m!}\prod_{j=1}^m\bigg(\frac{a(j-1)!}{j!}\bigg)^{l_j} \\[5pt]
	& = \sum_{l_1+2l_2+\cdots+ml_m = m}\frac{a^{l_1+\cdots+l_m}}{1^{l_1}l_1!\cdots m^{l_m}l_m!}.
\end{align*}
This ends the proof of Lemma \ref{06072021L1}.
\end{proof}

Resuming the proof of \eqref{29072021E1}, we deduce from \eqref{23112021E4}, Lemma \ref{06072021L1} and the following classical convexity inequality
$$\forall N\geq1, \forall a,b\geq0, \quad (a+b)^N\le 2^{N-1}(a^N+b^N),$$
that for all $m\geq1$, $0<t<T$ and $\xi\in\mathbb R^n$,
\begin{align*}
	\big\vert\vert\xi\vert^{\vert\alpha\vert}\partial^m_t(e^{-A_t(\xi)})\big\vert
	& \le s^m_T\vert\xi\vert^{\vert\alpha\vert}e^{-A_t(\xi)}\prod_{j=0}^{m-1}(\vert\xi\vert^2+j)
	\le s^m_T\vert\xi\vert^{\vert\alpha\vert}e^{-A_t(\xi)}(\vert\xi\vert^2+m)^m \\[5pt]
	& \le 2^{m-1}s^m_T(\vert\xi\vert^{2m}+m^m)\vert\xi\vert^{\vert\alpha\vert}e^{-A_t(\xi)}.
\end{align*}
There are now two terms to consider. In order to control them, we will use two easy estimates. The first, which comes from a straightforward study of function, states that 
$$\forall p,q>0, \forall x\geq0,\quad x^pe^{-cx^q}\le\bigg(\frac p{ecq}\bigg)^{p/q}.$$
The second one is a consequence of the log-convexity of the function $x\geq0\mapsto x^x$:
$$\forall x,y\geq0,\quad \bigg(\frac{x+y}2\bigg)^{(x+y)/2}\le x^{x/2}y^{y/2}.$$
As a consequence of the ellipticity condition \eqref{05072021E2} and the above two estimates, we therefore have that for all $N\geq0$, $\alpha\in\mathbb N^n$, $0\le t<T$ and $\xi\in\mathbb R^n$,
\begin{align*}
	\vert\xi\vert^{2N+\vert\alpha\vert}e^{-A_t(\xi)} & \le\vert\xi\vert^{2N+\vert\alpha\vert}e^{-c_T(T-t)^k\vert\xi\vert^2}\le\bigg(\frac{2N+\vert\alpha\vert}{2ec_T(T-t)^k}\bigg)^{(2N+\vert\alpha\vert)/2} \\[5pt]
	& \le\frac{(2N)^N\vert\alpha\vert^{\vert\alpha\vert/2}}{(ec_T(T-t)^k)^{(2N+\vert\alpha\vert)/2}},
\end{align*}
where $c_T\in(0,1)$ is the constant appearing in \eqref{05072021E2}. Moreover, since $m^m\le e^mm!$ and $\vert\alpha\vert!\le n^{\vert\alpha\vert}\alpha!$ (consequence of the definition of the exponential function and generalized Newton's formula), we obtain that for all $m\geq0$, $\alpha\in\mathbb N^n$, $0\le t<T$ and $\xi\in\mathbb R^n$,
\begin{align*}
	\big\vert\vert\xi\vert^{\vert\alpha\vert}\partial^m_t(e^{-A_t(\xi)})\big\vert 
	& \le 2^{m-1}s^m_T\frac{(2m)^m\vert\alpha\vert^{\vert\alpha\vert/2}}{(ec_T(T-t)^k)^{(2m+\vert\alpha\vert)/2}} + 2^{m-1}s^m_T\frac{m^m\vert\alpha\vert^{\vert\alpha\vert/2}}{(ec_T(T-t)^k)^{\vert\alpha\vert/2}} \\[5pt]
	& \le 2^{m-1}s^m_T\frac{(2e)^mm!(en)^{\vert\alpha\vert/2}\sqrt{\alpha!}}{(ec_T(T-t)^k)^{(2m+\vert\alpha\vert)/2}} + 2^{m-1}s^m_T\frac{e^mm!(en)^{\vert\alpha\vert/2}\sqrt{\alpha!}}{(ec_T(T-t)^k)^{\vert\alpha\vert/2}}. 
\end{align*}
There are now two cases to consider. First, when $0<T-t\le1$, we obtain the following bound
$$\big\vert\vert\xi\vert^{\vert\alpha\vert}\partial^m_t(e^{-A_t(\xi)})\big\vert \le \frac{c_1^{m+\vert\alpha\vert}s^m_T}{c_T^{(2m+\vert\alpha\vert)/2}(T-t)^{\frac k2(2m+\vert\alpha\vert)}}\ m!\ \sqrt{\alpha!},$$
where $c_1>0$ is a positive constant only depending on $c, c_0, e$ and $n$. In the other case where $T-t>1$, we use the fact that 
$$\frac1{(T-t)^{\frac k2\vert\alpha\vert}}\le 1 = \frac{(T-t)^{\frac k2(2m+\vert\alpha\vert)}}{(T-t)^{\frac k2(2m+\vert\alpha\vert)}}\le\frac{T^{\frac k2(2m+\vert\alpha\vert)}}{(T-t)^{\frac k2(2m+\vert\alpha\vert)}},$$
which implies that
$$\big\vert\vert\xi\vert^{\vert\alpha\vert}\partial^m_t(e^{-A_t(\xi)})\big\vert \le c_2^{m+\vert\alpha\vert} \frac{\max(1,T)^{\frac k2(2m+\vert\alpha\vert)}s^m_T}{c_T^{(2m+\vert\alpha\vert)/2}(T-t)^{\frac k2(2m+\vert\alpha\vert)}}\ m!\ \sqrt{\alpha!},$$
where $c_2>0$ is another positive constant only depending on $c, c_0, e$ and $n$. These two estimates on $\vert\xi\vert^{\vert\alpha\vert}\partial^m_t(e^{-A_t(\xi)})$ combined with \eqref{06072021E3}, Plancherel's theorem and the fact that $c_T\in(0,1)$, $s_T>1$ imply that \eqref{29072021E1} holds. This ends the proof of Theorem \ref{06072021T1}.

\section{Sufficient condition for approximate null-controllability with uniform cost}\label{suffcond}

This section is devoted to the proof of the direct implication in Theorem \ref{19112021T1}. 
Anew, we keep using the notation $U(T,t)$ to denote the Fourier multiplier \eqref{24112021E1}. Precisely, we aim at establishing the following quantitative unique continuation property:

\begin{thm}\label{17092021T1} Let $T>0$ be a positive time and $(\omega(t))_{t\in[0,T]}$ be a moving control support satisfying the following integral thickness condition:
\begin{equation}\label{mean_thick}
	\exists\gamma\in(0,1], \exists r>0, \forall x \in\mathbb R^n, \quad \frac1T\int_0^T \Leb\big(\omega(t) \cap B(x,r)\big)\, \mathrm dt \geq \gamma V_r.
\end{equation}
When the ellipticity condition \eqref{05072021E2} holds and that the matrices $Q_t$ depend analytically on the time variable $t\in\mathbb R$, there exist some positive constants $C_n>0$ and $K_n>0$ only depending on the dimension $n$ such that for all $0<\varepsilon\le\varepsilon_0$ and $g\in L^2(\mathbb R^n)$,
$$\int_0^{T_{\gamma}}\big\Vert U(T,t)g\big\Vert^2_{L^2(\mathbb R^n)}\ \mathrm dt \le \bigg(\frac{K_n(2-\gamma)}{\gamma}\bigg)^{K_nC}\int_0^{T_{\gamma}} \big\Vert U(T,t)g\big\Vert^2_{L^2(\omega(t))}\, \mathrm dt+ \varepsilon \Vert g\Vert^2_{L^2(\mathbb R^n)},$$
where we set
\begin{equation}\label{17122021E1}
	T_{\gamma} = \Big(1-\frac{\gamma}2\Big)T\quad\text{and}\quad\varepsilon_0 = C_n\sqrt{\frac{T_{\gamma}V_1}{r^n}},
\end{equation}
and where the constant $C = C_{\varepsilon,\gamma,r,k,T}>0$ is given by
\begin{equation}\label{17122021E2}
	C = \bigg(1-\log(\varepsilon r^n) +\log \bigg(1+\frac{C^{2k}_T}{\gamma^{2k}T^{2(k-1)}}+ \frac{r^2C^k_T}{\gamma^kT^k} \bigg)\bigg)\exp\bigg(\frac{K_nC^k_T}{\gamma^kT^{k-1}}\bigg)+ \frac{r^2C^k_T}{\gamma^kT^k},
\end{equation}
with $C_T = \max(1,T)s_T/c_T$, $c_T\in(0,1)$ and $k\geq1$ being the ones involved in \eqref{05072021E2}, and $s_T>1$ being a positive constant related to the analyticity property of the family $(Q_t)_{t\in\mathbb R}$ on $(-T,T)$.
\end{thm}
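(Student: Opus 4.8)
The plan is to apply the unique continuation property of Proposition~\ref{30112021P1} to the function $G$ defined by $G(t,x)=(U(T,t)g)(x)$, viewed as a function of the \emph{joint} variable $(t,x)$ on the slab $[0,T_{\gamma}]\times\mathbb R^n$, together with the set
$$\Omega=\big\{(t,x)\in[0,T_{\gamma}]\times\mathbb R^n : x\in\omega(t)\big\},$$
and then to translate the outcome back to the original quantities via Fubini's theorem: since the $t$-slices of $\Omega$ are exactly the sets $\omega(t)$, one has $\Vert G\Vert^2_{L^2([0,T_{\gamma}]\times\mathbb R^n)}=\int_0^{T_{\gamma}}\Vert U(T,t)g\Vert^2_{L^2(\mathbb R^n)}\,\mathrm dt$ and $\Vert G\Vert^2_{L^2(\Omega)}=\int_0^{T_{\gamma}}\Vert U(T,t)g\Vert^2_{L^2(\omega(t))}\,\mathrm dt$. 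The whole reason for stopping at $T_{\gamma}=(1-\gamma/2)T$ instead of $T$ is that on $[0,T_{\gamma}]$ one stays uniformly away from the final-time degeneracy of the smoothing estimates, since there $T-t\ge T-T_{\gamma}=\gamma T/2$.

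Two ingredients must be prepared before invoking Proposition~\ref{30112021P1}. The first is the geometry: splitting the integral in \eqref{mean_thick} as $\int_0^{T_{\gamma}}+\int_{T_{\gamma}}^{T}$ and bounding $\Leb(\omega(t)\cap B(x,r))\le V_r$ on the second piece, one gets, using $T-T_{\gamma}=\gamma T/2$,
$$\forall x\in\mathbb R^n,\quad\frac1{T_{\gamma}}\int_0^{T_{\gamma}}\Leb\big(\omega(t)\cap B(x,r)\big)\,\mathrm dt\ge\frac{\gamma}{2-\gamma}V_r,$$
which is precisely the statement that $\Omega$ is a thick subset of $[0,T_{\gamma}]\times\mathbb R^n$ with radius $r$ and rate $\gamma/(2-\gamma)$; this is where the factor $(2-\gamma)/\gamma$ in the constant of the theorem originates. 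The second ingredient is the regularity of $G$: on $[0,T_{\gamma}]$ we have $T-t\ge\gamma T/2$, so Theorem~\ref{06072021T1} gives, after integrating the $L^2(\mathbb R^n)$-bound over $t\in[0,T_{\gamma}]$,
$$\big\Vert\partial^m_t\partial^{\alpha}_x G\big\Vert_{L^2([0,T_{\gamma}]\times\mathbb R^n)}\le\sqrt{T_{\gamma}}\ c_0^{m+\vert\alpha\vert}\left(\frac{2C_T}{\gamma T}\right)^{\frac k2(2m+\vert\alpha\vert)}m!\ \sqrt{\alpha!}\ \Vert g\Vert_{L^2(\mathbb R^n)},$$
with $C_T=\max(1,T)s_T/c_T$; in other words $G$ is real-analytic in $t$ and Gevrey $1/2$ in $x$, and after rescaling the time variable to a normalized interval the effective radius of analyticity in $t$ is of order $\gamma^kT^{k-1}/C_T^k$ (the extra power of $T$ coming from the rescaling) while the effective length scale in $x$ is of order $(\gamma^kT^k/C_T^k)^{1/2}$. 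Finally, since $\vert e^{-A_t(\xi)}\vert\le1$ under \eqref{05072021E2}, the multipliers $U(T,t)$ are contractions on $L^2(\mathbb R^n)$, so $\Vert G\Vert^2_{L^2([0,T_{\gamma}]\times\mathbb R^n)}\le T_{\gamma}\Vert g\Vert^2_{L^2(\mathbb R^n)}$, which is what turns the abstract error term of Proposition~\ref{30112021P1} into $\varepsilon\Vert g\Vert^2_{L^2(\mathbb R^n)}$.

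With these at hand, Proposition~\ref{30112021P1} applied to $G$ and $\Omega$ produces an inequality $\Vert G\Vert^2_{L^2([0,T_{\gamma}]\times\mathbb R^n)}\le A\,\Vert G\Vert^2_{L^2(\Omega)}+\varepsilon\Vert g\Vert^2_{L^2(\mathbb R^n)}$ valid for $0<\varepsilon\le\varepsilon_0$, where, as in Logvinenko--Sereda/Kovrijkine-type arguments, the threshold $\varepsilon_0$ is of the form $C_n\sqrt{T_{\gamma}V_1/r^n}$ (the measure-theoretic range of validity of the estimate on the slab), and the constant $A$ is of the form $(K_n(2-\gamma)/\gamma)^{K_nC}$ with $C$ assembled from the logarithm of $1/(\varepsilon r^n)$, from the logarithms of the ratios of $r$ and of the $t$-interval length to the two intrinsic scales of $G$ (producing the $\log(1+C_T^{2k}/(\gamma^{2k}T^{2(k-1)})+r^2C_T^k/(\gamma^kT^k))$ term), from the exponential of the inverse analyticity radius $\exp(K_nC_T^k/(\gamma^kT^{k-1}))$, and from an additional $r^2C_T^k/(\gamma^kT^k)$ term. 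Rewriting this via Fubini as explained above yields exactly the claimed estimate with $C$ given by \eqref{17122021E2}.

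The step I expect to be the main obstacle is the last one: matching the hypotheses of Proposition~\ref{30112021P1} — which has to accommodate a variable ($t$) in which $G$ is only real-analytic alongside variables ($x$) where $G$ enjoys the stronger Gevrey-$1/2$ regularity, and which is posed on a product domain rather than on all of $\mathbb R^{n+1}$ — and then carrying out the bookkeeping of constants so that $C$ comes out in precisely the closed form \eqref{17122021E2}. In particular, the time rescaling that normalizes $[0,T_{\gamma}]$ and the resulting redistribution of powers of $T$ between the analytic scale in $t$ and the Gevrey scale in $x$ must be handled carefully; everything else is a direct combination of Theorem~\ref{06072021T1}, the elementary splitting of \eqref{mean_thick} above, and Proposition~\ref{30112021P1}.
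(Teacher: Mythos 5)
Your Step 1 (the thickness of $\Omega$ in the slab, with rate $\gamma/(2-\gamma)$) and your integrated Bernstein estimates match the paper, but the central mechanism of the proof is missing and, as written, the key step would fail. Proposition~\ref{30112021P1} is a \emph{local} estimate on the unit cylinder $(-1,1)\times B(0,1)$: it requires pointwise bounds $\Vert\partial_u^m\partial_x^\alpha f\Vert_{L^\infty}\le A^mB^{|\alpha|}m!(|\alpha|!)^s$ on the \emph{normalized} function, a pointwise lower bound $\Vert f\Vert_{L^\infty}\ge t$ with $t\in(0,1]$, and it yields a purely multiplicative inequality $\Vert f\Vert^2_{L^2}\le(K/\gamma)^{K(\cdot)}\Vert f\Vert^2_{L^2(E)}$ with \emph{no} additive error term. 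It therefore cannot be "applied to $G$ and $\Omega$" on all of $[0,T_\gamma]\times\mathbb R^n$, and it does not by itself produce the term $\varepsilon\Vert g\Vert^2_{L^2(\mathbb R^n)}$ nor the threshold $\varepsilon_0$; attributing these to the proposition is where your argument breaks. Note also that the obstacle you flag (mixed analytic-in-$t$ / Gevrey-in-$x$ regularity on a product domain) is already built into the statement of Proposition~\ref{30112021P1}; that is not the difficulty.

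What is actually needed — and what the paper does — is a Kovrijkine-type localization: cover $[0,T_\gamma]\times\mathbb R^n$ by the cylinders $C(\beta)=[0,T_\gamma]\times B(\beta,r)$, $\beta\in r\mathbb Z^n$, and split them into good and bad ones according to whether $\Vert\partial_t^m\partial_x^\alpha(U(T,\cdot)g)\Vert_{L^2(C(\beta))}$ is controlled by $\varepsilon^{-1/2}$ times suitable factors times $\Vert U(T,\cdot)g\Vert_{L^2(C(\beta))}$, as in \eqref{good}--\eqref{bad}. The bad cylinders are summed using the global Bernstein estimates \eqref{bernstein_estimate} and produce exactly the additive term $\varepsilon\Vert g\Vert^2_{L^2(\mathbb R^n)}$. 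On each good cylinder one converts the local $L^2$ derivative bounds into the required $L^\infty$ bounds via a Sobolev embedding on the rescaled unit cylinder, normalizes by the local norm $\Vert U(T,\cdot)g\Vert_{L^2(C(\beta))}$ to build the function $\varphi$ fed into Proposition~\ref{30112021P1}, and uses Step 1 to check that the rescaled set $E$ has measure at least $\tfrac{\gamma}{2-\gamma}\,2V_1$; the lower bound $\Vert\varphi\Vert_{L^\infty}\ge t$ with $t\le1$ is what forces the restriction $\varepsilon\le\varepsilon_0=C_n\sqrt{T_\gamma V_1/r^n}$ (via $\Vert U(T,\cdot)g\Vert_{L^\infty(C(\beta))}\ge\Vert U(T,\cdot)g\Vert_{L^2(C(\beta))}/\sqrt{\Leb C(\beta)}$), not a "measure-theoretic range of validity on the slab." Summing the resulting multiplicative estimates over good cylinders, using the finite overlap of the covering, and adding the bad-cylinder contribution gives the claimed inequality with the constant \eqref{17122021E2}. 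Without this decomposition your proposal has no route from the $L^2$ Bernstein estimates to the hypotheses of Proposition~\ref{30112021P1}, and no source for the $\varepsilon$ term.
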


Before proving Theorem \ref{17092021T1}, let us check that the direct implication in Theorem \ref{19112021T1} is a consequence of this result. Recall from Corollary \ref{12012022C1} that proving that the equation \eqref{05072021E1} is cost-uniformly approximately null-controllable from any moving control support satisfying the geometric condition \eqref{mean_thick}, when the ellipticity condition \eqref{05072021E2} holds, is equivalent to obtaining a weak observability estimate of the following form for all $\varepsilon\in(0,1)$ and $g\in L^2(\mathbb R^n)$,
$$\big\Vert U(T,0)g\big\Vert^2_{L^2(\mathbb R^n)}\le C_{\varepsilon,T}\int_0^T\big\Vert U(T,t)g\big\Vert^2_{L^2(\omega(t))}\, \mathrm dt + \varepsilon\Vert g\Vert^2_{L^2(\mathbb R^n)}.$$
In fact, such an inequality is an immediate consequence of Theorem \ref{17092021T1}, since the norm $\Vert U(T,t)g\Vert_{L^2(\mathbb R^n)}$ is increasing with respect to $t$, which provides that
\begin{align*}
	\big\Vert U(T,0)g\big\Vert^2_{L^2(\mathbb R^n)} & = \frac1{T_{\gamma}}\int_0^{T_{\gamma}}\big\Vert U(T,0)g\big\Vert^2_{L^2(\mathbb R^n)}\,\mathrm dt \\[5pt]
	& \le \frac1{T_{\gamma}}\int_0^{T_{\gamma}}\big\Vert U(T,t)g\big\Vert^2_{L^2(\mathbb R^n)}\,\mathrm dt \\[5pt]
	& \le \frac1{T_{\gamma}}\bigg(\frac{K_n(2-\gamma)}{\gamma}\bigg)^{K_nC}\int_0^{T_{\gamma}} \big\Vert U(T,t)g\big\Vert^2_{L^2(\omega(t))}\, \mathrm dt+ \frac{\varepsilon}{T_{\gamma}}\Vert g\Vert^2_{L^2(\mathbb R^n)}.
\end{align*}

Instrumental in the proof of Theorem \ref{17092021T1} are the following quantitative unique continuation estimates, whose proof is postponed in the Subsection \ref{spectral} of the appendix.

\begin{prop}\label{30112021P1} Let $A,B \geq 1$ be positive constants, $n\in\mathbb N^*$ be a dimension, $0< t \leq 1$ be a rate, $0< s < 1$ be a positive real number and $0< \gamma \leq 1$ be another rate. We also consider $E \subset (-1,1) \times B(0,1) \subset \mathbb R^{n+1}$ a measurable subset such that $\Leb E\geq 2\gamma V_1$. Then, there exists a constant $K_{s,n} \geq 1$ such that for all $f \in \mathcal C^{\infty}((-1,1) \times B(0,1))$ satisfying 
\begin{equation}\label{30112021E2}
	\Vert f \Vert_{L^{\infty}((-1,1)\times B(0,1))} \geq t,
\end{equation}
and
\begin{equation}\label{24112021E2}
	\forall m \in\mathbb N, \forall\alpha\in\mathbb N^n, \quad \Vert\partial_u^m\partial^{\alpha}_xf\Vert_{L^{\infty}((-1,1)\times B(0,1))} \le A^m B^{\vert\alpha\vert}\, m!\, (\vert\alpha\vert!)^s,
\end{equation}
the following estimate holds
\begin{equation}\label{30112021E1}
	\Vert f\Vert^2_{L^2((-1,1)\times B(0,1))} \leq \bigg(\frac{K_{s,n}}{\gamma}\bigg)^{K_{s,n}((1-\log t) e^{K_{s,n}A}+ B^{\frac1{1-s}})} \Vert f\Vert^2_{L^2(E)}.
\end{equation}
\end{prop}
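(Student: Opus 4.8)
The statement is a Tur\'an--Nazarov / Logvinenko--Sereda-type propagation of smallness for the anisotropic Gevrey class that is analytic in $u$ and Gevrey of order $s<1$ in the variables $x$; the plan is to follow and quantify the scheme of \cite{M}. Taking $m=0,\alpha=0$ in \eqref{24112021E2} gives $t\le\Vert f\Vert_{L^\infty((-1,1)\times B(0,1))}\le1$, so I would fix $p_0=(u_0,x_0)$ with $|f(p_0)|\ge t/2$. Then \eqref{24112021E2} lets me extend $f$ holomorphically in the $u$-variable alone to the \emph{thin} ``stadium'' $\Sigma=\{z_0\in\mathbb C:\operatorname{dist}(z_0,(-1,1))<1/(2A)\}$, on which $|f(\cdot,x)|\le 2$ for every real $x\in B(0,1)$; and, because $s<1$, in the $x$-variables alone to a complex polydisc of radius $\asymp 1$ around $B(0,1)$, on which $|f(u,\cdot)|\le M_x$ with $\log M_x\le C_{s,n}(1+B^{1/(1-s)})$.

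The core is a two-stage propagation of smallness, whose order matters. Stage~1 (in $u$): for $g:=f(\cdot,x_0)$, holomorphic and bounded by $2$ on the strip $\Sigma$ of width $\asymp1/A$ with $|g(u_0)|\ge t/2$, a quantitative analytic-continuation estimate --- Jensen's formula to bound the number of zeros of $g$ in the bulk, then Harnack's inequality applied to the zero-free factor --- propagates the lower bound through $\asymp A$ consecutive discs of radius $\asymp1/A$; the loss compounds multiplicatively from disc to disc, which is exactly what turns the aspect ratio $\asymp A$ of $\Sigma$ into a factor $e^{K_nA}$, and one gets $\sup_I|g|\ge m_1$ on the $O(1/A)$-bulk $I$ of $(-1,1)$ with $-\log m_1\lesssim e^{K_nA}(1-\log t)$; crucially only the cheap bound $2$ (not $M_x$) is used here, so no power of $B$ enters $m_1$. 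Stage~2 (in $x$): a Fubini/Chebyshev splitting of $\Leb(E)\ge2\gamma V_1$ produces a set $U^\ast\subset(-1,1)$ of measure $\ge\gamma$ with $\Leb_x(E^u)\ge\tfrac\gamma2 V_1$ for every $u\in U^\ast$, where $E^u=\{x:(u,x)\in E\}$; after intersecting $U^\ast$ with the bulk of Stage~1 one has, for each such $u$, $\sup_{B(0,1)}|f(u,\cdot)|\ge m_1$, and the $n$-dimensional Nazarov inequality on the radius-$1$ polydisc yields $\int_{E^u}|f(u,x)|^2\,\mathrm dx\gtrsim\big(\tfrac{c_n\gamma}{C_n}\big)^{C_n\log(M_x/m_1)}m_1^2$. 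The point is that $\log(M_x/m_1)\le\log M_x+\log(1/m_1)\lesssim C_{s,n}\big(B^{1/(1-s)}+e^{K_nA}(1-\log t)\big)$, so $B^{1/(1-s)}$ is \emph{added} to, not multiplied by, the $e^{K_nA}$ term.

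Integrating the last inequality over $u\in U^\ast\cap\{\text{bulk}\}$, together with $\Vert f\Vert^2_{L^2((-1,1)\times B(0,1))}\le 2V_1\Vert f\Vert^2_{L^\infty}\le2V_1$, gives \eqref{30112021E1} once $K_{s,n}$ is taken large enough to absorb $2V_1$, the factor $m_1^2$, and the various lower-order logarithmic terms into the power of $K_{s,n}/\gamma$. I expect the main obstacle to be precisely this last bookkeeping, i.e.\ keeping the exponent in the stated form: the expensive $e^{K_nA}$-type $u$-propagation must be carried out against the cheap bound $2$ and \emph{before} the large bound $M_x\sim e^{B^{1/(1-s)}}$ is brought in, for otherwise the thin-strip factor $e^{K_nA}$ multiplies $B^{1/(1-s)}$ and the stated estimate is lost; a secondary nuisance is the endpoint behaviour near $\pm1$ (the $O(1/A)$-collars where $\Sigma$ is not uniformly thick, and the regime $\gamma\lesssim1/A$ in which $e^{K_{s,n}A}$ already dominates $1/\gamma$, so that a crude finite lower bound for $\Vert f\Vert_{L^2(E)}$ --- available since $f$ is real-analytic in $x$ and not identically zero --- suffices), and the uniformity of the $n$-dimensional Nazarov step over the varying slices $E^u$.
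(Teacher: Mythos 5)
Your overall architecture --- the Fubini/Chebyshev splitting of $E$ into a set $U^\ast$ of times of measure $\ge\gamma$ whose slices $E^u$ have measure $\ge\gamma V_1/2$, propagation in $u$ first against the cheap bound $2$ (producing the $e^{K_nA}$ factor) and only afterwards in $x$ (producing the additive $B^{\frac1{1-s}}$), and the final bookkeeping --- matches the paper's. The paper, however, does not reprove the one-dimensional estimates from scratch: it imports a multidimensional Nazarov--Sodin--Volberg theorem from \cite{M} (Proposition \ref{17092021P1}) as a black box, applies it once in the $u$-variable (the analytic case $s=1$, to $u\mapsto f(u,x_0)$ and the set $I=U^\ast$) and once in the $x$-variables (the case $s<1$, on a single slice), and then passes from $L^\infty$ to $L^2$ by a Chebyshev argument on $E$.

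The genuine gap is at the hinge between your two stages. Stage~1 as you describe it (Jensen plus Harnack through a chain of $\asymp A$ discs) yields a lower bound on $\sup_I|g|$ over the deterministic bulk $I$, i.e.\ the existence of \emph{one} point of $I$ where $|f(\cdot,x_0)|\ge m_1$. From this you assert that ``for each $u\in U^\ast\cap I$'' one has $\sup_x|f(u,\cdot)|\ge m_1$, and you integrate over $U^\ast\cap I$. That implication is false: $f(\cdot,x_0)$ may vanish at many points of the bulk (consider $f(u,x)=u\,h(x)$ near $u=0$), and nothing in Stage~1 controls $|f(u,x_0)|$ at the particular times belonging to $U^\ast$. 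What is actually needed is the \emph{measure} (Remez/Tur\'an--Nazarov) version of the one-dimensional propagation --- a lower bound on $\sup_{U^\ast}|g|$ for an arbitrary measurable $U^\ast$ with $\Leb U^\ast\ge\gamma$ --- which requires the additional Cartan/Remez step controlling the measure of the sublevel set of the polynomial carrying the zeros; this is exactly the content of the quoted NSV result and is not delivered by Harnack chaining alone. Once you have it, the paper's route shows you do not need a pointwise bound on all of $U^\ast$: pick a single $u_1\in U^\ast$ nearly attaining $\sup_{U^\ast}|f(\cdot,x_0)|$, run the spatial $s<1$ estimate on that one fat slice $E_{u_1}$, bound $\Vert f(u_1,\cdot)\Vert_{L^\infty(E_{u_1})}\le\Vert f\Vert_{L^\infty(E)}$, and conclude in $L^2$ via the subset $\tilde E\subset E$ on which $|f|\le\frac2{\Leb E}\int_E|f|$; your slice-wise $L^2$ integration then becomes unnecessary.
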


We can now tackle the proof of Theorem \ref{17092021T1}, which is divided in five steps.

\medskip

\noindent\textit{$\triangleright$ Step 1: A thick set in time and space.} The first step consists in claiming that the condition \eqref{mean_thick} is a thickness condition for a subset of $[0,T]\times\mathbb R^n$. Before checking this fact, we notice that
\begin{equation}\label{10112021E5}
	\forall x\in\mathbb R^n, \quad\int_0^{T_{\gamma}}\Leb\big(\omega(t) \cap B(x,r)\big)\,\mathrm dt \geq\frac{\gamma}2TV_r,
\end{equation}
where we set $T_{\gamma} = (1-\gamma/2)T$. Indeed, we deduce from \eqref{mean_thick} that for all $x\in\mathbb R^n$,
\begin{align*}
	\gamma TV_r & \leq \int_0^T \Leb\big(\omega(t) \cap B(x,r)\big)\,\mathrm dt \\
	& \leq \int_0^{T_{\gamma}} \Leb\big(\omega(t) \cap B(x,r)\big)\,\mathrm dt+\int_{T_{\gamma}}^T \Leb\big(\omega(t) \cap B(x,r)\big)\,\mathrm dt \\
	& \leq \int_0^{T_{\gamma}} \Leb\big(\omega(t) \cap B(x,r)\big)\,\mathrm dt+\frac{\gamma}2 TV_r.
\end{align*}
By now, we consider the following measurable subset of $[0,T_{\gamma}]\times\mathbb R^n$
$$\Omega= \Big\{(t,x)\in[0,T_{\gamma}]\times\mathbb R^n : x \in \omega(t)\Big\}.$$
As a consequence of \eqref{10112021E5}, we deduce that for all $x\in\mathbb R^n$,
\begin{equation}\label{mean_thick1}
	\Leb\big(\Omega \cap[0,T_{\gamma}] \times B(x,r)\big) = \int_0^{T_{\gamma}} \Leb\big(\omega(t) \cap B(x,r)\big)\,\mathrm dt
	\geq \frac{\gamma}{2}T V_r.
\end{equation}
This proves that $\Omega$ is a thick subset of $[0,T_{\gamma}]\times\mathbb R^n$.

\medskip

\noindent\textit{$\triangleright$ Step 2: Definition of good and bad cylinders.} The remaining of the proof consists in using elements of harmonic analysis. In order to divide the set $[0,T_{\gamma}]\times\mathbb R^n$ into cylinders, we notice that since the ellipticity condition \eqref{05072021E2} holds and the matrices $Q_t$ depend analytically on the time variable $t\in\mathbb R$, we get the following Bernstein estimates from Theorem~\ref{06072021T1}: there exists a positive constant $c_0>1$ not depending on $T$ such that such that for all $m\geq0$, $\alpha\in\mathbb N^n$ and $g\in L^2(\mathbb R^n)$,
\begin{equation}\label{bernstein_estimate}
	\big\Vert\partial^m_t\partial^{\alpha}_x(U(T,\cdot)g)\big\Vert_{L^2([0,T_{\gamma}]\times\mathbb R^n)}\le \sqrt{T_{\gamma}}\,c_0^{m+\vert\alpha\vert}\,\bigg(\frac{2C_T}{\gamma T}\bigg)^{\frac k2(2m+\vert\alpha\vert)}\ m!\ \sqrt{\alpha!}\ \Vert g\Vert_{L^2(\mathbb R^n)},
\end{equation}
where we set
$$C_T = \frac{\max(1,T)s_T}{c_T}.$$
Indeed, it follows from Theorem~\ref{06072021T1} that we have for all $m\geq0$, $\alpha\in\mathbb N^n$, $0\le t<T$ and $g\in L^2(\mathbb R^n)$,
$$\big\Vert\partial^m_t\partial^{\alpha}_x(U(T,t)g)\big\Vert_{L^2(\mathbb R^n)}\le c_0^{m+\vert\alpha\vert}\ \bigg(\frac{\max(1,T)s_T}{c_T(T-t)}\bigg)^{\frac k2(2m+\vert\alpha\vert)}\ m!\ \sqrt{\alpha!}\ \Vert g\Vert_{L^2(\mathbb R^n)}.$$
By integrating in time, and using that
$$\int_0^{T_{\gamma}}\frac{\mathrm dt}{(T-t)^{k(2m+\vert\alpha\vert)}}\le\int_0^{T_{\gamma}}\frac{\mathrm dt}{(T-T_{\gamma})^{k(2m+\vert\alpha\vert)}}
= T_{\gamma}\,\frac{2^{k(2m+\vert\alpha\vert)}}{(\gamma T)^{k(2m+\vert\alpha\vert)}},$$
we therefore deduce that the Bernstein estimates \eqref{bernstein_estimate} hold. 

For $\beta\in r\mathbb Z^n$, let us now define the cylinder $C(\beta)$ by
$$C(\beta) =[0,T_{\gamma}] \times B(\beta, r).$$
Notice that the family $(C(\beta))_{\beta\in r\mathbb Z^n}$ covers the set $[0,T_{\gamma}]\times\mathbb R^n$:
\begin{equation}\label{06112020E5}
	[0,T_{\gamma}]\times\mathbb R^n = \bigcup_{\beta\in r\mathbb Z^n}C(\beta),
\end{equation}
and also satisfies the following intersection property:
$$\forall (\beta_1,\ldots, \beta_{10})\in r\mathbb Z^n\ \text{such that}\ \beta_k\ne\beta_l\ \text{when}\ 1\leq k\neq l\leq10,\quad\bigcap_{k=1}^{10} C(\beta_k)=\emptyset.$$
As a consequence, we have
\begin{equation}\label{recov1}
	\forall x\in [0,T_{\gamma}]\times\mathbb R^n,\quad 1\le\sum\limits_{\beta\in r\mathbb Z}\mathbbm1_{C(\beta)}(x)\le9.
\end{equation}
For the remaining of this proof, we fix $g \in L^2(\mathbb R^n)$ and $\varepsilon>0$. A cylinder $C(\beta)$ is said to be good if it satisfies that for all $m\in\mathbb N$ and $\alpha\in\mathbb N^n$,
\begin{equation}\label{good}
	\big\Vert \partial_t^m\partial^{\alpha}_x(U(T,\cdot)g)\big\Vert_{L^2(C(\beta))}\le \frac{3\sqrt{2T_{\gamma}}}{\sqrt{\varepsilon}}\,c_0^{m+\vert\alpha\vert}\,\bigg(\frac{4C_T}{\gamma T}\bigg)^{\frac k2(2m+\vert\alpha\vert)}\, m!\, \sqrt{\vert\alpha\vert!}\, \big\Vert U(T,\cdot)g\big\Vert_{L^2(C(\beta))}.
\end{equation}
Naturally, a cylinder $C(\beta)$ is said to be bad if it is not good, that is, when there exist a non-negative integer $m_0\in\mathbb N$ and a multiindex $\alpha_0\in\mathbb N^n$ such that
\begin{multline}\label{bad}
	\big\Vert\partial_t^{m_0}\partial^{\alpha_0}_x(U(T,\cdot)g)\big\Vert_{L^2(C(\beta))}>\frac{3\sqrt{2T_{\gamma}}}{\sqrt{\varepsilon}}\,c_0^{m_0+\vert\alpha_0\vert}\,\bigg(\frac{4C_T}{\gamma T}\bigg)^{\frac k2(2m_0+\vert\alpha_0\vert)}\, \\[5pt]
	\times m_0!\, \sqrt{\vert\alpha_0\vert!}\, \big\Vert U(T,\cdot)g\big\Vert_{L^2(C(\beta))}.
\end{multline}
Notice from the covering property \eqref{06112020E5} that
\begin{equation}\label{gb1}
	\big\Vert U(T,\cdot)g\big\Vert^2_{L^2([0,T_{\gamma}]\times\mathbb R^n)} \le \sum_{\text{g.c.}} \big\Vert U(T,\cdot)g\big\Vert^2_{L^2(C(\beta))} + \sum_{\text{b.c.}}\big\Vert U(T,\cdot)g\big\Vert^2_{L^2(C(\beta))},
\end{equation}
where g.c. stands for ``good cylinders'' and b.c. stands for ``bad cylinders''.

\medskip

\noindent\textit{$\triangleright$ Step 3: Estimates for the bad cylinders.} We shall estimate independently the two terms in the right-hand side of the inequality \eqref{gb1}. Let us begin with the second one.
It follows from the definition \eqref{bad} that if $C(\beta)$ is a bad cylinder, there exist a non-negative integer $m_0\in\mathbb N$ and a multiindex $\alpha_0\in\mathbb N^n$ such that
\begin{align}\label{bad2}
	\big\Vert U(T, \cdot) g\big\Vert^2_{L^2(C(\beta))} & \le\frac{\varepsilon(\gamma T)^{k(2m_0+|\alpha_0|)}}{18T_{\gamma}c_0^{2(m_0+\vert\alpha_0\vert)}(4C_T)^{k(2m_0 + \vert\alpha_0\vert)} (m_0!)^2\vert\alpha_0\vert!}\, \big\Vert\partial_t^{m_0}\partial^{\alpha_0}_x(U(T,\cdot)g)\big\Vert^2_{L^2(C(\beta))} \\[5pt]
	& \le\sum_{m\in \mathbb N,\,\alpha\in\mathbb N^n}\frac{\varepsilon (\gamma T)^{k(2m+|\alpha|)}}{18T_{\gamma}c_0^{2(m+\vert\alpha\vert)}(4C_T)^{k(2m + \vert\alpha\vert)} (m!)^2 \vert\alpha\vert!}\, \big\Vert\partial_t^m\partial^{\alpha}_x(U(T,\cdot)g)\big\Vert^2_{L^2(C(\beta))}. \nonumber
\end{align}
By summing over all the bad cylinders and using the fact that $\alpha!\le\vert\alpha\vert!$, we obtain from the Bernstein estimate \eqref{bernstein_estimate}, the covering property \eqref{recov1} and \eqref{bad2} that 
\begin{align}\label{12042021E2}
	&\ \sum_{\text{b.c.}}\big\Vert U(T,\cdot)g\big\Vert^2_{L^2(C(\beta))} \\ \nonumber
	\le &\ \varepsilon\sum_{\text{b.c.}}\sum_{m \in \mathbb N,\,\alpha\in\mathbb N^n}\frac{(\gamma T)^{k(2m+\vert\alpha\vert)}}{18T_{\gamma}c_0^{2(m+\vert\alpha\vert)}(4C_T)^{k(2m + \vert\alpha\vert)}(m!)^2\vert\alpha\vert!}\,\big\Vert\partial_t^m\partial^{\alpha}_x(U(T,\cdot)g)\big\Vert^2_{L^2(C(\beta))} \\[5pt]
	\le &\ \frac\varepsilon2\sum_{m\in\mathbb N,\,\alpha\in\mathbb N^n}\frac {(\gamma T)^{k(2m+\vert\alpha\vert)}}{T_{\gamma}c_0^{2(m+\vert\alpha\vert)}(4C_T)^{k(2m + \vert\alpha\vert)}(m!)^2\vert\alpha\vert!}\,\big\Vert\partial_t^m\partial^{\alpha}_x(U(T,\cdot)g)\big\Vert^2_{L^2([0,T_{\gamma}]\times\mathbb R^n)} \nonumber \\[5pt]
	\le &\ \frac\varepsilon2\sum_{m\in\mathbb N,\,\alpha\in\mathbb N^n}\frac1{4^{2m + \vert\alpha\vert}}\,\Vert g\Vert^2_{L^2(\mathbb R^n)} 
	\le \varepsilon\Vert g \Vert^2_{L^2(\mathbb R^n)}. \nonumber
\end{align}

\smallskip

\noindent\textit{$\triangleright$ Step 4: Estimates for the good cylinders.} It remains to estimate the first term in the right-hand side of the inequality \eqref{gb1}. To that end, we will use Proposition \ref{30112021P1}. This step is the most technical part of the paper. In order to alleviate the writing, we denote by $C_n$ a positive constant depending only on the dimension $n$, and whose value may change from a line to another. Let $C(\beta)$ be a good cylinder and $C = (-1,1)\times B(0,1)$. As a first step, we establish that there exists a positive constant $C_n>0$ such that for all $m\geq0$ and $\alpha\in\mathbb N^n$,
\begin{multline}\label{goodinf}
	\big\Vert \partial_t^m\partial^{\alpha}_x(U(T,T_{\gamma}(1+ \cdot)/2)g(\beta+r\,\cdot))\big\Vert^2_{L^{\infty}(C)}
	\le \frac1{\varepsilon r^n}
	\bigg(1+\frac{C^{2k}_T}{\gamma^{2k}T^{2(k-1)}}+\frac{r^2C^k_T}{\gamma^kT^k}\bigg)^{n+1} \\[5pt]
	\times C^{1+2m+\vert\alpha\vert}_n\,\bigg(\frac{C^k_T}{\gamma^kT^{k-1}}\bigg)^{2m}\,\bigg(\frac{r^2C^k_T}{\gamma^kT^k}\bigg)^{\vert\alpha\vert}\,(m!)^2\,\vert\alpha\vert!\,\big\Vert U(T,\cdot)g\big\Vert^2_{L^2(C(\beta))}.
\end{multline}
To that end, we begin by noticing that since the cylinder $C=(-1,1) \times B(0,1)$ satisfies the cone condition, the following Sobolev embedding holds, see e.g.~\cite[Theorem~4.12]{adams},
$$W^{n+1,2}(C) \hookrightarrow L^{\infty}(C).$$
This implies that there exists a positive constant $C_n>0$ such that 
$$\forall u \in W^{n+1,2}(C), \quad \Vert u\Vert_{L^{\infty}(C)}\le C_n\Vert u\Vert_{W^{n+1,2}(C)}.$$
It follows from this estimate and a change of variable that for all $m\geq0$ and $\alpha\in\mathbb N^n$,
\begin{align*}
	&\ \big\Vert \partial_t^m\partial^{\alpha}_x(U(T,T_{\gamma}(1+ \cdot)/2)g(\beta+r\,\cdot))\big\Vert^2_{L^{\infty}(C)} \\[5pt]
	\le &\ C^2_n \sum_{\tilde m+\vert\tilde\alpha\vert \le n+1} \big\Vert \partial_t^{m+\tilde m}\partial^{\alpha+\tilde{\alpha}}_x(U(T,T_{\gamma}(1+\,\cdot)/2)g(\beta+r\,\cdot))\big\Vert^2_{L^2(C)} \\[5pt]
	= &\ \frac{2C^2_n}{T_{\gamma}r^n}\sum_{\tilde m+\vert\tilde\alpha\vert \le n+1}T_{\gamma}^{2(m+\tilde m)} r^{2\vert\alpha+\tilde\alpha\vert}\big\Vert\partial_t^{m+\tilde m}\partial^{\alpha+\tilde\alpha}_x(U(T, \cdot)g)\big\Vert^2_{L^2(C(\beta))} \\[5pt]
	\le &\ \frac{2C^2_n}{T_{\gamma}r^n}\sum_{\tilde m+\vert\tilde\alpha\vert \le n+1}T^{2(m+\tilde m)}r^{2\vert\alpha+\tilde\alpha\vert}\big\Vert\partial_t^{m+\tilde m}\partial^{\alpha+\tilde\alpha}_x(U(T, \cdot)g)\big\Vert^2_{L^2(C(\beta))},
\end{align*}
where the the sums are taken over $\tilde m\in\mathbb N$ and $\tilde{\alpha}\in\mathbb N^n$. By using the definition \eqref{good} of good cube, we deduce that there exists a new constant $C_n>0$ such that for all $m\geq0$ and $\alpha\in\mathbb N^n$,
\begin{multline*}
	\big\Vert \partial_t^m\partial^{\alpha}_x(U(T,T_{\gamma}(1+ \cdot)/2)g(\beta+r\,\cdot))\big\Vert^2_{L^{\infty}(C)}
	\le\frac{2C^2_n}{\varepsilon T_{\gamma}r^n}\sum_{\tilde m+\vert\tilde\alpha\vert \le n+1}18T_{\gamma}\,T^{2(m+\tilde m)}r^{2\vert\alpha+\tilde\alpha\vert} \\
	\times c_0^{2(m+\tilde m+\vert\alpha+\tilde\alpha\vert)}\,\bigg(\frac{4C_T}{\gamma T}\bigg)^{k(2(m+\tilde m)+\vert\alpha+\tilde\alpha\vert)}\, ((m+\tilde m)!)^2\,\vert\alpha+\tilde\alpha\vert!\, \big\Vert U(T,\cdot)g\big\Vert^2_{L^2(C(\beta))}.
\end{multline*}
Using the fact that when $\tilde m+\vert\tilde\alpha\vert \le n+1$,
$$(m+\tilde m)!\,\vert\alpha+\tilde\alpha\vert!\le2^{m + \tilde m + \vert\alpha+\tilde\alpha\vert}\,m!\,\tilde m!\,\vert\alpha\vert!\,\vert\tilde\alpha\vert!\le4^{n+1}((n+1)!)^2\,2^{m + \vert\alpha\vert}\,m!\,\vert\alpha\vert!,$$
we deduce that there exists a new positive constant $C_n>0$ such that the above estimates rewrite in the following way
\begin{multline*}
	\big\Vert \partial_t^m\partial^{\alpha}_x(U(T,T_{\gamma}(1+ \cdot)/2)g(\beta+r\,\cdot))\big\Vert^2_{L^{\infty}(C)}
	\le \frac1{\varepsilon r^n}
	\sum_{\tilde m+\vert\tilde\alpha\vert \le n+1}\bigg(\frac{C^k_T}{\gamma^kT^{k-1}}\bigg)^{2\tilde m}\,\bigg(\frac{r^2C^k_T}{\gamma^kT^k}\bigg)^{\vert\tilde\alpha\vert} \\[5pt]
	\times C^{1+2m+\vert\alpha\vert}_n\,\bigg(\frac{C^k_T}{\gamma^kT^{k-1}}\bigg)^{2m}\,\bigg(\frac{r^2C^k_T}{\gamma^kT^k}\bigg)^{\vert\alpha\vert}\,(m!)^2\,\vert\alpha\vert!\,\big\Vert U(T,\cdot)g\big\Vert^2_{L^2(C(\beta))}.
\end{multline*}
Moreover, the sum can be estimated as follows
$$\sum_{\tilde m+\vert\tilde\alpha\vert \le n+1}\bigg(\frac{C^k_T}{\gamma^kT^{k-1}}\bigg)^{2\tilde m}\,\bigg(\frac{r^2C^k_T}{\gamma^kT^k}\bigg)^{\vert\tilde\alpha\vert}
\le C_n\bigg(1+\frac{C^{2k}_T}{\gamma^{2k}T^{2(k-1)}}+\frac{r^2C^k_T}{\gamma^kT^k}\bigg)^{n+1}.$$
This proves that the estimate \eqref{goodinf} actually holds. Assuming that the function $U(T, \cdot)g$ is not identically equal to zero on the cylinder $C(\beta)$, we define the function $\varphi: (-1,1)\times B(0,1) \rightarrow\mathbb C$ for all $(u,z) \in (-1,1)\times B(0,1)$ by
\begin{equation}\label{function_aux}
	\varphi(u, z)= \frac{\varepsilon r^n(U(T, T_{\gamma}(1+u)/2)g)(\beta +r z)}{C_n\Big(1+\frac{C^{2k}_T}{\gamma^{2k}T^{2(k-1)}}+\frac{r^2C^k_T}{\gamma^kT^k}\Big)^{n+1}\Vert U(T, \cdot)g\Vert_{L^2(C(\beta))}}.
\end{equation} 
It follows from \eqref{goodinf} that the function $\varphi$ satisfies the following estimates for all $m\in\mathbb N$ and $\alpha\in\mathbb N^n$,
\begin{equation}\label{17092021E1}
	\big\Vert \partial_u^m\partial^{\alpha}_z\varphi\big\Vert^2_{L^{\infty}((-1,1) \times B(0,1))}
	 \le \bigg(\frac{C_nC^k_T}{\gamma^kT^{k-1}}\bigg)^{2m}\,\bigg(\frac{r^2C_nC^k_T}{\gamma^kT^k}\bigg)^{\vert\alpha\vert}\,(m!)^2\,\vert\alpha\vert!.
\end{equation}
Moreover, the $L^{\infty}$-norm of the function $\varphi$ is also bounded from below as follows
\begin{align}\label{15122021E1}
	\Vert\varphi\Vert_{L^{\infty}((-1,1)\times B(0,1))} & = \frac{\varepsilon r^n\Vert U(T,\cdot)g\Vert_{L^{\infty}(C(\beta))}}{C_n\Big(1+\frac{C^{2k}_T}{\gamma^{2k}T^{2(k-1)}}+\frac{r^2C^k_T}{\gamma^kT^k}\Big)^{n+1}\Vert U(T, \cdot)g\Vert_{L^2(C(\beta))}} \\
	& \geq\frac{\varepsilon r^n}{C_n\Big(1+\frac{C^{2k}_T}{\gamma^{2k}T^{2(k-1)}}+\frac{r^2C^k_T}{\gamma^kT^k}\Big)^{n+1}\sqrt{\Leb C(\beta)}} =: t. \nonumber
\end{align}
Notice that considering $\varepsilon_0 = \varepsilon_{0,n,\gamma,r,T}>0$ defined in \eqref{17122021E1}, we get that $0<t\le 1$ provided $0<\varepsilon\le\varepsilon_0$. This is due to the fact that by definition of $t$,
$$0<t = \frac{\varepsilon r^n}{C_n\Big(1+\frac{C^{2k}_T}{\gamma^{2k}T^{2(k-1)}}+\frac{r^2C^k_T}{\gamma^kT^k}\Big)^{n+1}\sqrt{\Leb C(\beta)}}\le \frac{\varepsilon r^{n/2}}{C_n\sqrt{T_{\gamma}V_1}}.$$
Let us now define the following measurable set 
$$E = \big\{(u,z)\in(-1,1)\times B(0,1) : (T_{\gamma}(1+u)/2, \beta + rz) \in \Omega\big\}.$$
We deduce from \eqref{mean_thick1} that the measure of $E$ satisfies
\begin{equation}\label{15122021E2}
	\Leb E = \frac{\Leb\big(\Omega \cap C(\beta)\big)}{(T_{\gamma}/2)r^n} \geq \frac{(\gamma/2)TV_r}{(T_{\gamma}/2)r^n} = \frac{\gamma}{2-\gamma}\, 2V_1,\quad\text{with}\quad \frac{\gamma}{2-\gamma}\in(0,1].
\end{equation}
As a consequence of \eqref{17092021E1}, \eqref{15122021E2} and Proposition \ref{30112021P1} applied to the function $\varphi$, there exists a positive constant $K_n\geq1$ only depending on the dimension $n$, such that 
$$\Vert\varphi\Vert^2_{L^2((-1,1)\times B(0,1))} \leq \bigg(\frac{K_n(2-\gamma)}{\gamma}\bigg)^{K_n((1-\log t) e^{K_nA}+ B^2)} \Vert\varphi\Vert^2_{L^2(E)},$$
where $0<t\le 1$ is the one appearing in \eqref{15122021E1}, and where we set 
$$A = \frac{C_nC^k_T}{\gamma^kT^{k-1}}\quad\text{and}\quad B = \bigg(\frac{r^2C_nC^k_T}{\gamma^kT^k}\bigg)^{1/2}.$$
Up to slightly modifying the positive constant $K_n$, the above estimate can be rewritten in the following form
\begin{equation}\label{15122021E3}
	\Vert\varphi\Vert^2_{L^2((-1,1)\times B(0,1))}\le \bigg(\frac{K_n(2-\gamma)}{\gamma}\bigg)^{K_nC} \Vert\varphi\Vert^2_{L^2(E)},
\end{equation}
where the positive constant $C = C_{\varepsilon,\gamma,r,k,T}>0$ is given by \eqref{17122021E2}. By changing variables, it directly follows from the definition \eqref{function_aux} of the function $\varphi$ and the estimate \eqref{15122021E3} that
\begin{equation}\label{quasi_analytic1}
	\big\Vert U(T,\cdot)g\big\Vert^2_{L^2(C(\beta))}
	\le \bigg(\frac{K_n(2-\gamma)}{\gamma}\bigg)^{K_nC}\big\Vert U(T,\cdot)g\big\Vert^2_{L^2(\Omega \cap C(\beta))}.
\end{equation}
This inequality also holds when the function $U(T, \cdot)g$ is identically equal to zero on the cylinder $C(\beta)$. By summing over all the good cylinders, we therefore deduce from \eqref{recov1} and \eqref{quasi_analytic1} that
\begin{align}\label{15122021E5}
	\sum_{\textrm{g.c.}} \big\Vert U(T, \cdot)g \big\Vert^2_{L^2(C(\beta))} & \le 9\bigg(\frac{K_n(2-\gamma)}{\gamma}\bigg)^{K_nC}\big\Vert U(T,\cdot)g\big\Vert^2_{L^2(\Omega\cap\cup_{\text{g.c.}} C(\beta))} \\[5pt]
	& \le 9\bigg(\frac{K_n(2-\gamma)}{\gamma}\bigg)^{K_nC}\big\Vert U(T,\cdot)g\big\Vert^2_{L^2(\Omega)}. \nonumber
\end{align}

\medskip

\noindent\textit{$\triangleright$ Step 5: End of the proof.} Gathering the estimates \eqref{gb1}, \eqref{12042021E2} and \eqref{15122021E5}, and slightly modifying the constant $K_n$, we obtain that for all $g\in L^2(\mathbb R^n)$,
$$\big\Vert U(T,\cdot)g\big\Vert^2_{L^2([0,T_{\gamma}]\times\mathbb R^n)}\le\bigg(\frac{K_n(2-\gamma)}{\gamma}\bigg)^{K_nC} \int_0^{T_{\gamma}} \big\Vert U(T,t)g\big\Vert^2_{L^2(\omega(t))}\, \mathrm dt+ \varepsilon\Vert g\Vert^2_{L^2(\mathbb R^n)}.$$
This is the expected estimate. The proof of Theorem \ref{17092021T1} is therefore now ended.

\section{Appendix}

\subsection{Weak observability} Let us begin this appendix by stating the cost-uniform approximate null-controllability of the equation \eqref{05072021E1} in term of a weak observability estimate. In this subsection, we keep using the notation $U(T,t)$ to denote the Fourier multiplier \eqref{24112021E1}. Moreover, there is no particular assumption on the family $(Q_t)_{t\in\mathbb R}$. The following result is an adaptation of Lemma 3.4 in the work \cite{MR2679651}, and its proof is given for the sake of completeness of the present paper.

\begin{prop}\label{03092020P1} Given the time $T>0$, the cost $C>0$, the approximation rate $\varepsilon>0$ and the moving control support $(\omega(t))_{t\in[0,T]}$, the two following properties
\begin{multline*}
	\forall f_0\in L^2(\mathbb R^n),\exists h\in L^2((0,T)\times\mathbb R^n), \\
	\frac1C\int_0^T\Vert h(t,\cdot)\Vert^2_{L^2(\omega(t))}\,\mathrm dt
	+\frac1{\varepsilon}\Vert f(T,\cdot)\Vert^2_{L^2(\mathbb R^n)}\le \Vert f_0\Vert^2_{L^2(\mathbb R^n)},
\end{multline*}
where $f$ stands for the mild solution of the equation \eqref{05072021E1} with initial datum $f_0$ and control $h$, and
$$\forall g\in L^2(\mathbb R^n),\quad \big\Vert U(T,0)g\big\Vert^2_{L^2(\mathbb R^n)}\le C\int_0^T\big\Vert U(T,t)g\big\Vert^2_{L^2(\omega(t))}\, \mathrm dt + \varepsilon\Vert g\Vert^2_{L^2(\mathbb R^n)},$$
are equivalent.
\end{prop}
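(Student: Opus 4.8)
The plan is to run the classical Hilbert Uniqueness Method / convex duality argument, as announced in the statement by adapting Lemma~3.4 of \cite{MR2679651}. First I would encode the mild solution of \eqref{05072021E1} through the bounded operator
$$L_T : h\longmapsto\int_0^TU(T,t)\big(\mathbbm1_{\omega(t)}h(t,\cdot)\big)\,\mathrm dt,$$
acting from the space $H$ of controls endowed with the norm $\|h\|_H^2=\int_0^T\|h(t,\cdot)\|^2_{L^2(\omega(t))}\,\mathrm dt$ (only the restrictions $h|_{\omega(t)}$ enter, so extending by zero outside $\omega(t)$ is harmless) into $L^2(\mathbb R^n)$, so that the mild solution satisfies $f(T,\cdot)=U(T,0)f_0+L_Th$. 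Since the Fourier multiplier $U(T,t)$ has the real symbol $\exp(-\int_t^TQ_s\xi\cdot\xi\,\mathrm ds)$, it is self-adjoint on $L^2(\mathbb R^n)$, and a direct computation gives $\|L_T^*g\|_H^2=\int_0^T\|U(T,t)g\|^2_{L^2(\mathbb R^n)\cap L^2(\omega(t))}\,\mathrm dt=\int_0^T\|U(T,t)g\|^2_{L^2(\omega(t))}\,\mathrm dt$. Hence the first property of the statement becomes: for every $f_0$ there exists $h\in H$ with $\tfrac1C\|h\|_H^2+\tfrac1\varepsilon\|U(T,0)f_0+L_Th\|^2_{L^2(\mathbb R^n)}\le\|f_0\|^2_{L^2(\mathbb R^n)}$, and the second becomes $\|U(T,0)g\|^2_{L^2(\mathbb R^n)}\le C\|L_T^*g\|_H^2+\varepsilon\|g\|^2_{L^2(\mathbb R^n)}$ for all $g$.

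For the implication from the observability estimate to the controllability property, I would fix $f_0$ and minimise over $g\in L^2(\mathbb R^n)$ the functional
$$J(g)=\frac C2\|L_T^*g\|_H^2+\frac\varepsilon2\|g\|^2_{L^2(\mathbb R^n)}+\big\langle U(T,0)f_0,g\big\rangle_{L^2(\mathbb R^n)}.$$
The penalisation term $\tfrac\varepsilon2\|g\|^2$ makes $J$ continuous, strictly convex and coercive, so it admits a unique minimiser $g^*$, whose Euler--Lagrange equation reads $CL_TL_T^*g^*+\varepsilon g^*+U(T,0)f_0=0$. Choosing the control $h:=CL_T^*g^*$, this equation gives $U(T,0)f_0+L_Th=-\varepsilon g^*$ and, after substituting back, $\tfrac1C\|h\|_H^2+\tfrac1\varepsilon\|U(T,0)f_0+L_Th\|^2=C\|L_T^*g^*\|_H^2+\varepsilon\|g^*\|^2=-2J(g^*)$. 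It then remains to check that $J(g)\ge-\tfrac12\|f_0\|^2$ for every $g$: completing the square, $\langle U(T,0)f_0,g\rangle+\tfrac12\|f_0\|^2=\tfrac12\|f_0+U(T,0)g\|^2-\tfrac12\|U(T,0)g\|^2\ge-\tfrac12\|U(T,0)g\|^2$, so that $J(g)+\tfrac12\|f_0\|^2\ge\tfrac12\big(C\|L_T^*g\|_H^2+\varepsilon\|g\|^2-\|U(T,0)g\|^2\big)\ge0$ precisely by the observability estimate. Therefore $-2J(g^*)\le\|f_0\|^2$, which is exactly the required cost bound for $h$.

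For the converse implication, I would fix $g\in L^2(\mathbb R^n)$, assume $U(T,0)g\neq0$ (the other case being trivial) and apply the controllability property to $f_0:=U(T,0)g$, obtaining $h\in H$ with $\tfrac1\varepsilon\|U(T,0)f_0+L_Th\|^2+\tfrac1C\|h\|_H^2\le\|f_0\|^2=\|U(T,0)g\|^2$. Using self-adjointness, $\|U(T,0)g\|^2=\langle U(T,0)f_0,g\rangle=\langle U(T,0)f_0+L_Th,g\rangle-\langle h,L_T^*g\rangle_H$, hence by the Cauchy--Schwarz inequality $\|U(T,0)g\|^2\le\|U(T,0)f_0+L_Th\|\,\|g\|+\|h\|_H\|L_T^*g\|_H$. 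Writing the right-hand side as $\tfrac1{\sqrt\varepsilon}\|U(T,0)f_0+L_Th\|\cdot\sqrt\varepsilon\|g\|+\tfrac1{\sqrt C}\|h\|_H\cdot\sqrt C\|L_T^*g\|_H$ and applying the Cauchy--Schwarz inequality in $\mathbb R^2$ together with the coupled bound above, I would get $\|U(T,0)g\|^2\le\|U(T,0)g\|\,\big(\varepsilon\|g\|^2+C\|L_T^*g\|_H^2\big)^{1/2}$, which after division by $\|U(T,0)g\|$ and squaring is precisely the observability estimate.

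The argument is essentially soft, so the only delicate point is the bookkeeping of the constants $C$ and $\varepsilon$: both implications must preserve them \emph{exactly}, which forces one to keep the terms $\tfrac1C\|h\|_H^2$ and $\tfrac1\varepsilon\|\,\cdot\,\|^2$ coupled — through a single Cauchy--Schwarz step in one direction, and through the minimisation of $J$ in the other — rather than estimating them separately, which would only yield the equivalence up to a loss in the constants. I expect this to be the main, though modest, obstacle; the remaining ingredients (identification of the control space, self-adjointness of $U(T,t)$, existence and characterisation of the minimiser of $J$) are routine, and well-posedness of the mild solution is not at stake here since the two properties are purely statements about the Fourier multipliers $U(T,t)$.
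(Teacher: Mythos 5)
Your argument is correct and follows essentially the same route as the paper: the same duality reformulation, the same choice $f_0=U(T,0)g$ with a coupled Cauchy--Schwarz step for the implication from controllability to observability, and the minimisation of the same quadratic functional $J$ with its Euler--Lagrange equation for the converse. The only cosmetic difference is how you conclude the cost bound (lower-bounding $J$ by $-\tfrac12\Vert f_0\Vert^2$ via completing the square, where the paper instead derives $\Vert U(T,0)h_0\Vert\le\Vert f_0\Vert$ from the Euler--Lagrange identity), which is an equivalent finish.
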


In view of the definition of approximate null-controllability with uniform cost stated in the beginning of Section \ref{results}, we deduce the following

\begin{cor}\label{12012022C1} Let $T>0$ be a positive time and $(\omega(t))_{t\in[0,T]}$ be a moving control support. The equation \eqref{05072021E1} is cost-uniformly approximately null-controllable in time $T$ from $(\omega(t))_{t\in[0,T]}$ if and only if for all $\varepsilon\in(0,1)$, there exists a positive constant $C_{\varepsilon,T}>0$ such that for all $g\in L^2(\mathbb R^n)$,
$$\big\Vert U(T,0)g\big\Vert^2_{L^2(\mathbb R^n)}\le C_{\varepsilon,T}\int_0^T\big\Vert U(T,t)g\big\Vert^2_{L^2(\omega(t))}\, \mathrm dt + \varepsilon\Vert g\Vert^2_{L^2(\mathbb R^n)}.$$
\end{cor}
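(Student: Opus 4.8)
The final statement, Corollary~\ref{12012022C1}, reduces at once to Proposition~\ref{03092020P1} by unwinding the definition of cost-uniform approximate null-controllability from Section~\ref{results}: the two bounds $\|f(T,\cdot)\|\le\varepsilon\|f_0\|$ and $\int_0^T\|h(t,\cdot)\|_{L^2(\omega(t))}^2\,\mathrm dt\le C_{\varepsilon,T}\|f_0\|^2$ together amount, after harmless adjustments of the constants (replace $\varepsilon$ by $\varepsilon^2$, split a factor of two), to the first property in Proposition~\ref{03092020P1} for the cost $2C_{\varepsilon,T}$ and rate $2\varepsilon^2$, while conversely that first property yields both $\|f(T,\cdot)\|\le\sqrt\varepsilon\,\|f_0\|$ and $\int_0^T\|h(t,\cdot)\|_{L^2(\omega(t))}^2\,\mathrm dt\le C\|f_0\|^2$; since an observability estimate for a small rate implies the same estimate for any larger rate with the same constant, restricting to $\varepsilon\in(0,1)$ costs nothing. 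So the task is to prove Proposition~\ref{03092020P1}, which I would do by the classical Hilbert Uniqueness Method / convex-duality argument.

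\textbf{Set-up.} First I would record the Duhamel formula at the final time: passing to the Fourier side, the mild solution of \eqref{05072021E1} with datum $f_0$ and control $h$ satisfies
$$f(T,\cdot)=U(T,0)f_0+\int_0^TU(T,t)\big(h(t,\cdot)\mathbbm1_{\omega(t)}\big)\,\mathrm dt,$$
where $U(T,t)$ is the Fourier multiplier \eqref{24112021E1}; since the $Q_t$ are real symmetric, the symbol of $U(T,t)$ is real, so $U(T,t)$ is self-adjoint on $L^2(\mathbb R^n)$. Introduce the bounded operator $L$ sending a control $h$ (which may be assumed supported in $\{(t,x):x\in\omega(t)\}$, with squared norm $\int_0^T\|h(t,\cdot)\|_{L^2(\omega(t))}^2\,\mathrm dt$) to $Lh=\int_0^TU(T,t)h(t,\cdot)\,\mathrm dt$; a direct computation gives $(L^*g)(t,\cdot)=\mathbbm1_{\omega(t)}U(T,t)g$, hence $\|L^*g\|_{L^2}^2=\int_0^T\|U(T,t)g\|_{L^2(\omega(t))}^2\,\mathrm dt$. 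With this notation the first property of Proposition~\ref{03092020P1} reads $\inf_h\big(\frac1C\|h\|^2+\frac1\varepsilon\|U(T,0)f_0+Lh\|^2\big)\le\|f_0\|^2$ for every $f_0$, and the second reads $\|U(T,0)g\|^2\le C\|L^*g\|^2+\varepsilon\|g\|^2$ for every $g$.

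\textbf{Observability $\Rightarrow$ controllability.} Assuming the observability estimate, for a fixed $f_0$ I would minimize over $g\in L^2(\mathbb R^n)$ the functional
$$\Phi(g)=\frac C2\|L^*g\|^2+\frac\varepsilon2\|g\|^2-\langle U(T,0)f_0,g\rangle,$$
which is continuous, strictly convex and coercive because of the term $\frac\varepsilon2\|g\|^2$; let $g_0$ be its minimizer. The Euler--Lagrange equation reads $CLL^*g_0+\varepsilon g_0=U(T,0)f_0$, and choosing the control $h_0=-CL^*g_0$ one gets $f(T,\cdot)=U(T,0)f_0+Lh_0=\varepsilon g_0$, so $\frac1C\|h_0\|^2+\frac1\varepsilon\|f(T,\cdot)\|^2=C\|L^*g_0\|^2+\varepsilon\|g_0\|^2$. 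Testing the Euler--Lagrange equation against $g_0$ gives $C\|L^*g_0\|^2+\varepsilon\|g_0\|^2=\langle f_0,U(T,0)g_0\rangle$; combined with the observability estimate applied to $g_0$, this yields $\|U(T,0)g_0\|^2\le\langle f_0,U(T,0)g_0\rangle\le\|f_0\|\,\|U(T,0)g_0\|$, hence $\|U(T,0)g_0\|\le\|f_0\|$, and therefore $\frac1C\|h_0\|^2+\frac1\varepsilon\|f(T,\cdot)\|^2=\langle f_0,U(T,0)g_0\rangle\le\|f_0\|^2$, which is the first property.

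\textbf{Controllability $\Rightarrow$ observability, and the main obstacle.} Conversely, assuming the first property, for an arbitrary $g\in L^2(\mathbb R^n)$ I would apply it to the specific datum $f_0=U(T,0)g$, obtaining a control $h$ with $f(T,\cdot)=U(T,0)f_0+Lh$ and $\frac1C\|h\|^2+\frac1\varepsilon\|f(T,\cdot)\|^2\le\|f_0\|^2$. Pairing the Duhamel identity with $g$ and using $U(T,0)^*=U(T,0)$ gives $\|U(T,0)g\|^2=\langle f_0,U(T,0)g\rangle=\langle f(T,\cdot),g\rangle-\langle h,L^*g\rangle$, and two applications of the Cauchy--Schwarz inequality bound the right-hand side by $\big(\frac1\varepsilon\|f(T,\cdot)\|^2+\frac1C\|h\|^2\big)^{1/2}\big(\varepsilon\|g\|^2+C\|L^*g\|^2\big)^{1/2}\le\|f_0\|\big(\varepsilon\|g\|^2+C\|L^*g\|^2\big)^{1/2}$; since $\|f_0\|=\|U(T,0)g\|$, dividing yields $\|U(T,0)g\|^2\le\varepsilon\|g\|^2+C\|L^*g\|^2$ (the case $U(T,0)g=0$ being trivial), which is exactly the observability estimate. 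The only genuinely analytic points in this scheme are the Duhamel representation of the mild solution and the identity $U(T,t)^*=U(T,t)$ in the non-autonomous framework, together with the existence of the minimizer $g_0$ and the validity of its Euler--Lagrange equation; everything else is elementary Hilbert-space algebra, and the argument is precisely that of Lemma~3.4 in \cite{MR2679651} adapted to moving control supports.
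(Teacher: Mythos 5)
Your argument is correct and is essentially the paper's own proof: the corollary is reduced (by the same harmless adjustment of constants) to Proposition~\ref{03092020P1}, whose two implications you establish exactly as in the appendix --- the duality/Cauchy--Schwarz computation with $f_0=U(T,0)g$ for one direction, and minimization of the same coercive convex functional (up to an immaterial sign in the linear term) with the control $h=-CL^*g_0$ and the estimate $\Vert U(T,0)g_0\Vert\le\Vert f_0\Vert$ for the other. No gaps; your explicit bookkeeping of the constant adjustments is a detail the paper leaves implicit.
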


\begin{proof}[Proof of Proposition \ref{03092020P1}] Consider $T>0$, $C>0$, $\varepsilon>0$ and a moving control support $(\omega(t))_{t\in[0,T]}$. We first assume that for all $f_0\in L^2(\mathbb R^n)$ there exists a control $h\in L^2((0,T)\times\mathbb R^n)$ such that
\begin{equation}\label{cuanc}
	\frac1C\int_0^T\Vert h(t,\cdot))\Vert^2_{L^2(\omega(t))}\, \mathrm dt + \frac1{\varepsilon} \Vert f(T,\cdot)\Vert^2_{L^2(\mathbb R^n)} \le \Vert f_0\Vert^2_{L^2(\mathbb R^n)}.
\end{equation}
Notice that the function $f(T,\cdot)$ is given (by definition) by 
$$f(T,\cdot)= U(T,0)f_0 + \int_0^TU(T,t)(h(t,\cdot)\mathbbm1_{\omega(t)})\, \mathrm dt.$$
Let $g \in L^2(\mathbb R^n)$. We deduce from the selfadjointness of the operators $U(T,t)$ and the above equality that for all $f_0\in L^2(\mathbb R^n)$,
\begin{align*}
	\langle f_0,U(T,0)g \rangle_{L^2(\mathbb R^n)} & = \langle U(T,0)f_0,  g \rangle_{L^2(\mathbb R^n)} \\
	& = \langle f(T,\cdot),g\rangle_{L^2(\mathbb R^n)} - \int_0^T\big\langle U(T,t)(h(t,\cdot)\mathbbm1_{\omega(t)}),g\big\rangle_{L^2(\mathbb R^n)}\, \mathrm dt \\
	& = \langle f(T,\cdot),g\rangle_{L^2(\mathbb R^n)} - \int_0^T\big\langle h(t,\cdot),U(T,t)g\big\rangle_{L^2(\omega(t))}\, \mathrm dt.
\end{align*}
Cauchy-Schwarz' inequality in the space $L^2((0,T)\times\mathbb R^n)\times L^2(\mathbb R^n)$ then implies that for all $f_0\in L^2(\mathbb R^n)$,
\begin{multline*}
	\big\vert\langle f_0,U(T,0)g \rangle_{L^2(\mathbb R^n)}\big\vert^2
	\le \bigg(\frac1{\varepsilon}\Vert f(T,\cdot)\Vert^2_{L^2(\mathbb R^n)} + \frac1C\int_0^T \Vert h(t,\cdot)\Vert^2_{L^2(\omega(t))}\,\mathrm dt\bigg) \\
	\times\bigg(\varepsilon\Vert g\Vert^2_{L^2(\mathbb R^n)} + C\int_0^T\big\Vert U(T,t)g\big\Vert^2_{L^2(\omega(t))}\, \mathrm dt\bigg).
\end{multline*}
By using the estimate \eqref{cuanc} and by choosing $f_0= U(T,0)g$, we therefore obtain the following weak observability estimate for all $g\in L^2(\mathbb R^n)$,
\begin{equation}\label{observability_estimate}
	\big\Vert U(T,0)g\big\Vert^2_{L^2(\mathbb R^n)} \le C \int_0^T \big\Vert U(T,t)g\big\Vert^2_{L^2(\omega(t))}\, \mathrm dt +\varepsilon \Vert g\Vert^2_{L^2(\mathbb R^n)}.
\end{equation}

Conversely, let us assume that the weak observability estimate \eqref{observability_estimate} holds for all $g \in L^2(\mathbb R^n)$. Considering a fixed $f_0 \in L^2(\mathbb R^n)$, we consider the following $C^1$ convex functional $J:L^2(\mathbb R^n)\rightarrow\mathbb R$ defined for all $f\in L^2(\mathbb R^n)$ by 
$$J(f)= \frac C2 \int_0^T \big\Vert U(T,t)f\big\Vert^2_{L^2(\omega(t))}\, \mathrm dt + \frac{\varepsilon}2 \Vert f\Vert^2_{L^2(\mathbb R^n)} + \big\langle U(T,0)f, f_0\big\rangle_{L^2(\mathbb R^n)}.$$
The functional $J$ is immediately coercive since we have from Cauchy-Schwarz' inequality that for all $f \in L^2(\mathbb R^n)$,
$$J(f) \ge \frac{\varepsilon}2 \Vert f\Vert^2_{L^2(\mathbb R^n)} - \Vert f\Vert_{L^2(\mathbb R^n)}\Vert f_0\Vert_{L^2(\mathbb R^n)}.$$
As a consequence, there exists a function $h_0 \in L^2(\mathbb R^n)$ such that 
$$J(h_0) = \min_{f \in L^2(\mathbb{R}^n)} J(f).$$ 
In particular, we have 
\begin{equation*}
	\nabla J( h_0) = C \int_0^TU(T,t)(\mathbbm1_{\omega(t)}U(T,t)h_0)\, \mathrm dt +\varepsilon h_0+ U(T,0)f_0=0.
\end{equation*}
It follows from the above equality that the mild solution $f$ of the equation \eqref{05072021E1} with the control
$$h(t,\cdot)= CU(T,t)h_0,$$
satisfies
\begin{align*}
	f(T,\cdot) & = U(T,0)f_0 + \int_0^TU(T,t)(h(t,\cdot)\mathbbm1_{\omega(t)})\, \mathrm dt \\
	& = U(T,0)f_0 + C\int_0^TU(T,t)(\mathbbm1_{\omega(t)}U(T,t)h_0)\, \mathrm dt = -\varepsilon h_0.
\end{align*}
On the other hand, we have
\begin{multline}\label{obs01}
	\langle\nabla J(h_0), h_0\rangle_{L^2(\mathbb R^n)} = C \int_0^T\Vert U(T,t)h_0\Vert^2_{L^2(\omega(t))}\, \mathrm dt \\
	+ \varepsilon\Vert h_0\Vert^2_{L^2(\mathbb R^n)} + \langle h_0,U(T,0)f_0 \rangle_{L^2(\mathbb R^n)} = 0.
\end{multline}
This implies that 
\begin{align*}
	\frac1C\int_0^T\Vert h(t,\cdot)\Vert^2_{L^2(\omega(t))}\, \mathrm dt+ \frac1{\varepsilon} \Vert f(T,\cdot)\Vert^2_{L^2(\mathbb R^n)} 
	& = -\big\langle U(T,0)h_0, f_0\big\rangle_{L^2(\mathbb R^n)} \\[5pt]
	& \leq \Vert U(T,0)h_0\Vert_{L^2(\mathbb R^n)}\Vert f_0\Vert_{L^2(\mathbb R^n)}.
\end{align*}
It only remains to estimate the term $\Vert U(T,0)h_0\Vert_{L^2(\mathbb R^n)}$. We deduce from the weak observability estimate \eqref{observability_estimate}, the equality \eqref{obs01} and Cauchy-Schwarz' inequality that
\begin{align*}
	\Vert U(T,0)h_0\Vert^2_{L^2(\mathbb R^n)} & \le C \int_0^T \big\Vert U(T,t)h_0\big\Vert^2_{L^2(\omega(t))}\, \mathrm dt +\varepsilon \Vert h_0\Vert^2_{L^2(\mathbb R^n)} \\[5pt]
	& = -\langle U(T,0)h_0, f_0 \rangle_{L^2(\mathbb R^n)} \le \Vert U(T,0)h_0\Vert_{L^2(\mathbb R^n)}\Vert f_0\Vert_{L^2(\mathbb R^n)}.
\end{align*}
We therefore deduce that
$$\Vert U(T,0)h_0\Vert_{L^2(\mathbb R^n)} \le \Vert f_0\Vert_{L^2(\mathbb R^n)}.$$
This ends the proof of Proposition \ref{03092020P1}.
\end{proof}

\subsection{Unique continuation}\label{spectral} In this second subsection, we give the proof of Proposition \ref{30112021P1}, which was key in the proof of Theorem \ref{17092021T1} in Section \ref{suffcond}. To that end, we will rely on the following multidimensional version of a theorem by Nazarov-Sodin-Volberg \cite[Theorem~B]{NSV}, proven by the second author in the work \cite{M}.

\begin{prop}{\cite[Example~5.11]{M}}\label{17092021P1} Let $A\geq 1$ be a positive constant, $R>0$ be a radius, $d\geq1$ be a dimension, $ 0<t\leq1$ be a rate, $0<s\leq1$ be a positive real number and $0< \gamma\leq1$ be another rate. We also consider $E \subset B(0,R) \subset \mathbb R^d$ a measurable set such that $\Leb E \geq \gamma V_R$. Then, there exists a constant $C_{s,d, A,R, t} \geq 1$ such that for all $f \in \mathcal C^{\infty}(B(0,R))$ satisfying 
$$\Vert f\Vert_{L^{\infty}(B(0,R))} \geq t,$$ 
and
$$\forall\alpha\in\mathbb N^n, \quad \Vert\partial_x^{\alpha} f\Vert_{L^{\infty}(B(0,R))} \le A^{\vert\alpha\vert}\,(\vert\alpha\vert!)^s,$$
the following estimate holds
$$\Vert f\Vert^2_{L^{\infty}(B(0,R))} \le C_{s,d,A,R,t}\,\Vert f\Vert^2_{L^{\infty}(E)}.$$
Moreover: 
\begin{itemize}[label=\textbf.,leftmargin=* ,parsep=2pt,itemsep=0pt,topsep=2pt]
	\item[.] When $0<s<1$, there exists a constant $K_{s,d}\geq1$, only depending on $s$ and $d$, such that
	$$C_{s,d,A,R,t} \le \bigg(\frac{K_{s,d}}{\gamma}\bigg)^{K_{s,d}(1-\log t+ (AR)^{\frac1{1-s}})}.$$
	\item[.] When $s=1$, there exists a constant $K_d\geq1$, only depending on $d$, such that
$$C_{1,d,A,R,t}\le\bigg(\frac{K_d}{\gamma}\bigg)^{K_d(1-\log t)e^{K_dRA}}.$$
\end{itemize}
\end{prop}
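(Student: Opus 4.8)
The statement is a quantitative propagation-of-smallness inequality: the constant $C_{s,d,A,R,t}$ should be controlled by a \emph{doubling index} of $f$ (a measure of its oscillation relative to the derivative bounds) together with the geometry of the complex region to which $f$ extends. The plan is to make this precise and to reduce everything to the one-variable Nazarov--Sodin--Volberg theorem \cite{NSV} (Theorem~B). First I would rescale $x\mapsto Rx$, which turns the hypothesis into $\|\partial^\alpha(f(R\,\cdot))\|_{L^\infty(B(0,1))}\le(AR)^{|\alpha|}(|\alpha|!)^s$; hence it suffices to treat $R=1$ with $A$ replaced by $AR\ge R$ (using $A\ge1$), which already accounts for the fact that $A$ and $R$ enter the final bounds only through the product $AR$. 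Next, expanding the Taylor series of $f$ at real points and using $\sum_{|\alpha|=k}\frac{(|\alpha|!)^s}{\alpha!}=(k!)^{s-1}d^k$, one sees that $f$ extends to a holomorphic $\tilde f$ on the complex neighbourhood $\Omega_\rho=\{z\in\mathbb C^d:\operatorname{dist}(z,B(0,1))<\rho\}$ with $\|\tilde f\|_{L^\infty(\Omega_\rho)}\le\sum_k(k!)^{s-1}(dA\rho)^k$. When $0<s<1$ this series converges for every $\rho$ and is $\lesssim\exp\!\big(c_s(dA\rho)^{1/(1-s)}\big)$, so $\tilde f$ is entire; when $s=1$ it converges only for $\rho<1/(dA)$, and choosing $\rho=1/(2dA)$ gives the absolute bound $\|\tilde f\|_{L^\infty(\Omega_\rho)}\le2$, i.e.\ $\tilde f$ only lives on a thin tube of width $\sim 1/A$ around $B(0,1)$. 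This dichotomy is the source of the difference between the two announced estimates.

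Since $\|f\|_{L^\infty(B(0,1))}\ge t$, the doubling index of $\tilde f$ — the logarithm of the ratio of its sup on $\Omega_\rho$ to its sup on $B(0,1)$, measured on balls of comparable radius — is at most $\log(1/t)$ plus $\log\|\tilde f\|_{L^\infty(\Omega_\rho)}$, hence $\lesssim 1-\log t+(AR)^{1/(1-s)}$ when $s<1$ (taking $\rho$ of order $1$) and $\lesssim 1-\log t$ when $s=1$ (on the tube of width $\sim 1/A$). I would then invoke the core estimate: for a holomorphic function on a neighbourhood of $B(0,1)$ with doubling index $N$ and a measurable $E\subset B(0,1)$ with $\Leb E\ge\gamma V_1$, one has $\|f\|_{L^\infty(B(0,1))}\le(K_d/\gamma)^{K_dN\,\mathfrak m}\|f\|_{L^\infty(E)}$, where $\mathfrak m$ is a geometric factor governed by the aspect ratio of the complexification region. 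In one variable this is \cite{NSV} (Theorem~B); the multidimensional version of \cite{M} is obtained from it by a Fubini/slicing argument — restricting $f$ to a positive-measure family of segments along a well-chosen direction $\theta$, on which the trace satisfies a one-dimensional Gevrey-$s$ bound with constant $dA$ (since $|(\theta\cdot\nabla)^kf|\le(dA)^k(k!)^s$) and on which the orthogonal Fubini slice of $E$ has substantial relative measure — combined with a chain of three-balls inequalities (again quantified by $N$) both to transport the lower bound $\|f\|_{L^\infty(B(0,1))}\ge t$ onto those segments and to re-expand from the lines to the full ball. For $s<1$ the region is a ball of comparable radius, so $\mathfrak m=O_d(1)$; for $s=1$ it is the width-$1/(AR)$ tube whose conformal modulus (that of a long thin rectangle) is $\sim e^{c_dAR}$, so $\mathfrak m\sim e^{c_dAR}$. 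Substituting the index bounds of the previous paragraph then yields $(K_{s,d}/\gamma)^{K_{s,d}(1-\log t+(AR)^{1/(1-s)})}$ when $0<s<1$ and $(K_d/\gamma)^{K_d(1-\log t)e^{K_dAR}}$ when $s=1$, which is the claim.

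The genuine difficulty lies in the one-to-many-variables propagation of smallness with the correct dependence on $N$ \emph{and} on the geometry of $\Omega$; tracking the conformal modulus of the thin tube is exactly what produces the exponential factor $e^{K_dRA}$ in the analytic case, whereas in the Gevrey regime $s<1$ the function is entire and only the polynomial factor $(AR)^{1/(1-s)}$ survives. Subsidiary care is needed to ensure that the one-dimensional measure condition persists on a large enough set of lines after slicing, and that the normalisation $t\le1$ (which keeps the index nonnegative) is carried through the chain of three-balls inequalities; note finally that the very finiteness of $C_{s,d,A,R,t}$ rests on the quasi-analyticity of the Gevrey-$s$ class for $s\le1$, which forbids a nonzero $f$ from vanishing almost everywhere on the positive-measure set $E$ while $\|f\|_{L^\infty}\ge t>0$.
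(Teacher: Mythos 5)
First, a point of reference: the paper does not prove this proposition at all --- it is imported verbatim from \cite[Example~5.11]{M} and used as a black box in the proof of Proposition \ref{30112021P1}. So there is no in-paper argument to compare yours against, and your proposal has to be judged on its own. On that basis, your overall strategy is the standard and essentially correct one for such statements: rescale to $R=1$; use the Gevrey bounds to extend $f$ holomorphically (entire of order $1/(1-s)$ with $\log\Vert\tilde f\Vert\lesssim (AR)^{1/(1-s)}$ when $s<1$, versus only a tube of width $\sim 1/(dAR)$ with $\Vert\tilde f\Vert\le 2$ when $s=1$); bound the doubling index by $\log(\Vert\tilde f\Vert_{L^\infty(\Omega_\rho)}/t)$; and conclude by a Remez-type propagation of smallness, reduced to one variable by slicing along segments on which $E$ retains relative measure $\gtrsim\gamma$. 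This dichotomy does reproduce the two announced forms of the exponent, including the additive coupling $1-\log t+(AR)^{1/(1-s)}$, which you correctly obtain from holomorphy; note that a literal reduction to the real-variable Theorem~B of \cite{NSV} with the weight sequence $A^k(k!)^s$ would couple $\log(1/t)$ and $A$ multiplicatively and give a weaker constant, so the holomorphic extension is doing genuine work in your argument and should not be presented as a mere reformulation of NSV.

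Two repairs are needed. First, the entire quantitative content is delegated to the asserted ``core estimate'' $\Vert f\Vert_{L^\infty(B(0,1))}\le (K_d/\gamma)^{K_dN\mathfrak m}\Vert f\Vert_{L^\infty(E)}$; as written this is essentially the proposition itself, and the slicing step (finding a segment through the maximum point on which $E$ has relative linear measure $\gtrsim c_d\gamma$, via polar coordinates and a pigeonhole on the radial variable) must be carried out, since it is where the $\gamma$-dependence of the base is actually produced. Second, your explanation of the factor $e^{K_dRA}$ via ``the conformal modulus of a long thin rectangle $\sim e^{c_dAR}$'' is incorrect as stated: the modulus of a $1\times(1/(AR))$ rectangle is of order $AR$, not $e^{c AR}$. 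The exponential arises differently: one covers $B(0,1)$ by $O((AR)^d)$ balls of radius $\sim 1/(AR)$ on which $\tilde f$ is holomorphic with a fixed margin, selects one carrying a $\gamma$-proportion of $E$, and connects it to the maximum point by a chain of length $O(AR)$ of two-constants (Hadamard three-circles) inequalities; each step raises the exponent by a fixed factor $\theta^{-1}>1$, so after $O(AR)$ steps the local doubling index is $\le (1-\log t)\theta^{-cAR}=(1-\log t)e^{K_dAR}$, which is exactly the claimed bound. With these two steps made precise, your outline becomes a complete proof.
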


Let us now begin the proof of Proposition \ref{30112021P1}. In order to establish the estimate \eqref{30112021E1}, we follow the strategy implemented by B. Jaye and M. Mitkovski in the work \cite{JM}. Before getting into the heart of the proof, notice that the assumption \eqref{24112021E2} implies that the function $f$ and all its derivatives are Lipschitz, so the estimates \eqref{30112021E2} and \eqref{24112021E2} can be extended on the compact set $[-1,1]\times \overline{B(0,1)}$.

\medskip

\noindent\textit{$\triangleright$ Step 1: Unique continuation in time.} The first step consists in applying Proposition \ref{17092021P1} with respect to the time variable. Precisely, we will apply this result to the function $u\in I\mapsto f(u,x_0)$, where $x_0\in\overline{B(0,1)}$ will be chosen in a while, and the set $I\subset(-1,1)$ is defined by
$$I = \Big\{u \in (-1,1) : \Leb E_u\geq\frac{\gamma}2V_1\Big\},$$
where the sets $E_u$ are given for all $u\in(-1,1)$ by
$$E_u = \big\{x \in B(0,1) : (u,x) \in E \big\}.$$
We first notice that $\Leb I \geq \gamma$. Indeed, we deduce from the assumption on $E$ that
\begin{align*}
	2\gamma V_1\le\Leb E
	& = \int_{-1}^1 \Leb E_u\, \mathrm du \\[5pt]
	& = \int_I   \Leb E_u\,\mathrm du + \int_{(-1,1) \setminus I} \Leb E_u\, \mathrm du
	\leq(\Leb I+\gamma)V_1.
\end{align*}
Moreover, the function $f$ being continuous, we can now consider $(u_0, x_0) \in [-1,1]\times \overline{B(0,1)}$ such that 
$$\vert f(u_0,x_0)\vert = \Vert f\Vert_{L^{\infty}([-1,1]\times\overline{B(0,1)})}.$$
Noticing from \eqref{30112021E2} and \eqref{24112021E2} respectively that the function $u\in I\mapsto f(u,x_0)$ satisfies
$$\Vert f(\cdot,x_0)\Vert_{L^{\infty}(-1,1)}\geq\vert f(u_0,x_0)\vert = \Vert f\Vert_{L^{\infty}([-1,1]\times\overline{B(0,1)})}\geq t,$$
and
$$\forall m\geq0,\quad \Vert\partial_u^mf(\cdot,x_0)\Vert_{L^{\infty}(-1,1)} \le \Vert\partial_u^mf\Vert_{L^{\infty}((-1,1)\times B(0,1))} \le A^m\, m!,$$
we deduce from Proposition~\ref{17092021P1} that there exists a positive constant $K_1\geq 1$ such that 
\begin{equation}\label{nsv_time}
	\Vert f(\cdot,x_0)\Vert_{L^{\infty}(-1,1)} \le \bigg(\frac{K_1}{\gamma}\bigg)^{K_1(1-\log t)e^{K_1A}}\Vert f(\cdot,x_0)\Vert_{L^{\infty}(I)}.
\end{equation}

\medskip

\noindent\textit{$\triangleright$ Step 2: Unique continuation in space.} The second step consists in applying Proposition \ref{17092021P1} to the function $x\in B(0,1)\mapsto f(u_1,x)$, where $u_1\in I$ is defined in the following way: fixing $\varepsilon >0$, we consider $u_1\in I$ satisfying
$$\vert f(u_1,x_0)\vert\geq\Vert f(\cdot,x_0)\Vert_{L^{\infty}(I)} - \varepsilon.$$
On the one hand, we get from the above inequality and \eqref{nsv_time} that
\begin{equation}\label{30112021E4}
	\Vert f(u_1,\cdot) \Vert_{L^{\infty}(B(0,1))} \geq \vert f(u_1,x_0)\vert \geq \Vert f(\cdot,x_0)\Vert_{L^{\infty}(I)} - \varepsilon \geq t_{\varepsilon},
\end{equation}
where we set
$$t_{\varepsilon} = \bigg(\frac{K_1}{\gamma}\bigg)^{-K_1(1-\log t)e^{K_1A}}t - \varepsilon.$$
We assume that $0<\varepsilon\ll1$ is small enough so that $0<t_{\varepsilon}\le 1$. On the other hand, it follows from \eqref{24112021E2} that the function $f(u_1,\cdot)$ enjoys the following regularity
$$\forall\alpha\in\mathbb N^n, \quad\Vert\partial^{\alpha}_xf(u_1,\cdot)\Vert_{L^{\infty}(B(0,1))} \le \Vert\partial_u^m\partial^{\alpha}_xf\Vert_{L^{\infty}((-1,1)\times B(0,1))} \le B^{\vert\alpha\vert}\, (\vert\alpha\vert!)^s.$$
Moreover, since $u_1 \in I$, we have $\Leb E_{u_1} \geq \gamma V_1/2$ by definition of the set $I$, and Proposition~\ref{17092021P1} gives the existence of a positive constant $K_n\geq 1$ such that
\begin{align}\label{30112021E3}
	\Vert f(u_1,\cdot)\Vert_{L^{\infty}(B(0,1))} & \le \bigg(\frac{K_n}{\gamma}\bigg)^{K_n(1-\log t_{\varepsilon} + B^{\frac1{1-s}})}\,\Vert f(u_1,\cdot)\Vert_{L^{\infty}(E_{u_1})} \\
	& \le \bigg(\frac{K_n}{\gamma}\bigg)^{K_n(1-\log t_{\varepsilon} + B^{\frac1{1-s}})}\,\Vert f\Vert_{L^{\infty}(E)}. \nonumber
\end{align}

\medskip

\noindent\textit{$\triangleright$ Step 3: Unique continuation in time and space.} We now gather the two estimates established in the two first steps. We deduce from \eqref{nsv_time}, \eqref{30112021E4} and \eqref{30112021E3} that 
\begin{align*}
	\Vert f\Vert_{L^{\infty}((-1,1)\times B(0,1))} & \leq \bigg(\frac{K_1}{\gamma}\bigg)^{K_1(1-\log t)e^{K_1A}}\Vert f(\cdot,x_0)\Vert_{L^{\infty}(I)} \\
	& \le \bigg(\frac{K_1}{\gamma}\bigg)^{K_1(1-\log t)e^{K_1A}}(\Vert f(u_1,\cdot)\Vert_{L^{\infty}(B(0,1))} + \varepsilon) \\
	& \le \bigg(\frac{K_1}{\gamma}\bigg)^{K_1(1-\log t)e^{K_1A}}\bigg(\bigg(\frac{K_n}{\gamma}\bigg)^{K_n(1-\log t_{\varepsilon} + B^{\frac1{1-s}})}\Vert f\Vert_{L^{\infty}(E)} + \varepsilon\bigg).
\end{align*}
By letting $\varepsilon$ tend to $0^+$ and noticing that
\begin{align*}
	0\le 1-\log t_0 & = 1 - \log\bigg(\bigg(\frac{K_1}{\gamma}\bigg)^{-K_1(1-\log t)e^{K_1A}}t\bigg) \\[5pt]
	& = 1 - \log t + K_1(1-\log t)e^{K_1A}\log\bigg(\frac{K_1}{\gamma}\bigg) \\[5pt]
	& \le (1 - \log t)e^{K_1A} + K_1(\log K_1)(1-\log t)e^{K_1A},
\end{align*}
it follows that there exists a new positive constant $K_{1,n}\geq1$ such that
\begin{equation}\label{06102021nsv}
	\Vert f \Vert_{L^{\infty}((-1,1)\times B(0,1))}\le\bigg(\frac{K_{1,n}}{\gamma}\bigg)^{K_{1,n}((1-\log t)e^{K_{1,n}A}+ B^{\frac1{1-s}})}\,\Vert f\Vert_{L^{\infty}(E)}.
\end{equation} 

\medskip

\noindent\textit{$\triangleright$ Step 4: From the $L^{\infty}$-norm to the $L^2$-norm.} In this last step, we check that the $L^{\infty}$-norm can be replaced by the $L^2$-norm in the estimate \eqref{06102021nsv}. To that end, we consider
$$\tilde E= \bigg\{(u,x)\in E : |f(u,x)| \le \frac2{\Leb E}\int_E\vert f\vert\bigg\}.$$
It follows from the definition of $\tilde E$ that
$$\frac{2\Leb(E \setminus \tilde E)}{\Leb E}\int_E\vert f\vert\le\int_{E\setminus\tilde E}\vert f\vert\le\int_E\vert f\vert.$$
If $\int_E\vert f\vert\neq 0$, we deduce that 
$$\frac{2\Leb(E\setminus\tilde E)}{\Leb E}\le1,$$
and as a consequence,
$$\Leb\tilde E\geq\frac{\Leb E}2.$$
In the case where $\int_E\vert f\vert = 0$, we deduce that $\Leb \tilde E = \Leb E$ and the above estimate holds as well. It follows that \eqref{06102021nsv} also holds when $E$ is replaced by $\tilde E$ and $\gamma$ is replaced by $\gamma/2$. We therefore obtain that
$$\Vert f\Vert_{L^{\infty}((-1,1)\times B(0,1))} \le \Big(\frac{2K_{1,n}}{\gamma}\Big)^{K_{1,n}((1-\log t)e^{K_{1,n}A}+ B^{\frac1{1-s}})}\Vert f\Vert_{L^{\infty}(\tilde E)}.$$
As a consequence of Cauchy-Schwarz' inequality, the $L^2$-norm of the function $f$ is bounded in the following way
\begin{align*}
	\Vert f\Vert_{L^2((-1,1)\times B(0,1))} & \le \Leb((-1,1)\times B(0,1))^{1/2} \Vert f\Vert_{L^{\infty}((-1,1)\times B(0,1))} \\
	& \le \Leb((-1,1)\times B(0,1))^{1/2}\bigg(\frac{2K_{1,n}}{\gamma}\bigg)^{K_{1,n}((1-\log t)e^{K_{1,n}A} + B^{\frac1{1-s}})}\Vert f\Vert_{L^{\infty}(\tilde E)} \\
	& \le \frac{2\Leb((-1,1)\times B(0,1))^{1/2}}{\Leb E}\bigg(\frac{2K_{1,n}}{\gamma}\bigg)^{K_{1,n}((1-\log t)e^{K_{1,n}A}+ B^{\frac1{1-s}})}\int_E\vert f\vert \\
	& \le \frac{2\Leb((-1,1)\times B(0,1))^{1/2}}{(\Leb E)^{1/2}}\bigg(\frac{2K_{1,n}}{\gamma}\bigg)^{K_{1,n}((1-\log t)e^{K_{1,n}A}+ B^{\frac1{1-s}})}\Vert f\Vert_{L^2(E)}.
\end{align*}
Moreover, it follows from the assumption $\Leb E\geq 2\gamma V_1$ that
$$\frac{2\Leb((-1,1)\times B(0,1))^{1/2}}{(\Leb E)^{1/2}} = \frac{2(2V_1)^{1/2}}{(\Leb E)^{1/2}}\le\frac2{\gamma^{1/2}}\le\frac2{\gamma}.$$
Therefore, by slightly modifying the constant $K_{1,n}$, we obtain that
$$\Vert f\Vert_{L^2((-1,1)\times B(0,1))} \le \bigg(\frac{K_{1,n}}{\gamma}\bigg)^{K_{1,n}((1-\log t)e^{K_{1,n}A}+ B^{\frac1{1-s}})}\Vert f\Vert_{L^2(E)}.$$
This ends the proof of Proposition \ref{30112021P1}.


\begin{thebibliography}{10}
\bibitem{adams} R.A. \textsc{Adams}, J.J.F. \textsc{Fournier}, \textit{Sobolev spaces}, Second edition, Pure and Applied Mathematics (Amsterdam), 140. Elsevier/Academic Press, Amsterdam (2003).
\bibitem{A} P. \textsc{Alphonse}, \textit{Description of the smoothing effects of semigroups generated by fractional Ornstein-Uhlenbeck operators and subelliptic estimates}, J. Evol. Equ.  22 (2022), no. 28, 25 pp.
\bibitem{AB} P. \textsc{Alphonse}, J. \textsc{Bernier}, \textit{Smoothing properties of fractional Ornstein-Uhlenbeck semigroups and null-controllability}, Bull. Sci. Math. 165 (2020), 102914, 52 pp.
\bibitem{AM} P. \textsc{Alphonse}, J. \textsc{Martin}, \textit{Stabilization and approximate null-controllability for a large class of diffusive equations from thick control supports}, ESAIM: COCV 28 (2022), no. 16, 30 pp.
\bibitem{zbMATH06680964} K. \textsc{Beauchard}, P. \textsc{Cannarsa}, \textit{Heat equation on the Heisenberg group: observability and applications}, J. Differ. Equations 262 (2017), no. 8, pp. 4475-4521.
\bibitem{zbMATH07206863} K. \textsc{Beauchard}, J. \textsc{Dard\'e}, S. \textsc{Ervedoza}, \textit{Minimal time issues for the observability of Grushin-type equations}, Ann. Inst. Fourier 70 (2021), no. 1, pp. 247-312.
\bibitem{BEPS} K. \textsc{Beauchard}, M. \textsc{Egidi}, K. \textsc{Pravda-Starov}, \textit{Geometric conditions for the null-controllability of hypoelliptic quadratic parabolic equations with moving control supports},  C. R. Math. Acad. Sci. Paris 358 (2020), no. 6, pp. 651-700.
\bibitem{BJKPS} K. \textsc{Beauchard}, P. \textsc{Jaming}, K. \textsc{Pravda-Starov}, \textit{Spectral inequality for finite combinations of {H}ermite functions and null-controllability of hypoelliptic quadratic equations}, to appear in Studia Math. (2020).
\bibitem{MR3680980} K. \textsc{Beauchard}, K. \textsc{Pravda-Starov}, \textit{Null-controllability of non-autonomous Ornstein-Uhlenbeck equations}, J. Math. Anal. Appl. 456 (2017), no. 1, pp. 496-524.
\bibitem{zbMATH07352607} J. \textsc{Dard\'e}, J. \textsc{Royer}, \textit{Critical time for the observability of Kolmogorov-type equations}, J. \'Ec. Polytech., Math. 8 (2021), pp. 859-894.
\bibitem{DSV} A. \textsc{Dicke}, A. \textsc{Seelman}, I. \textsc{Veseli\'c}, \textit{Control problem for quadratic parabolic differential equations with sensor sets of finite volume or anisotropically decaying density}, preprint (2022), \href{https://arxiv.org/abs/2201.02370}{arXiv:2201.02370}.
\bibitem{MR3816981} M. \textsc{Egidi}, I. \textsc{Veseli\'c}, \textit{Sharp geometric condition for null-controllability of the heat equation on $\mathbb R^d$ and consistent estimates on the control cost}, Arch. Math. (Basel) 111 (2018), no. 1, pp. 85-99.
\bibitem{H} L. \textsc{Hörmander}, \textit{Hypoelliptic second order differential equations}, Acta Math. 119 (1967), pp. 147-171.
\bibitem{HWWII} S. \textsc{Huang}, G. \textsc{Wang}, M. \textsc{Wang}, \textit{Observable sets, Potentials and Schr\"odinger Equations}, Commun. Math. Phys. 395 (2022), 1297-1343.
\bibitem{HWW} S. \textsc{Huang}, G. \textsc{Wang}, M. \textsc{Wang}, \textit{Characterizations of stabilizable sets for some parabolic equations in $\mathbb R^n$}, J. Differential Equations 272 (2021), pp. 255-288.
\bibitem{JM} B. \textsc{Jaye}, M. \textsc{Mitkovski}, \textit{Quantitative uniqueness property for $L^2$ functions with fast decaying, or sparsely supported, Fourier transform}, Int. Math. Res. Not. (2021).
\bibitem{Ko} A. \textsc{Koenig}, \textit{Lack of null-Controllability for the fractional heat equation and related equations}, SIAM J. Control Optim. 58 (2020), no. 6, pp. 3130-3160.
\bibitem{LL} C. \textsc{Laurent}, M. \textsc{L\'eautaud}, \textit{Tunneling estimates and approximate controllability for hypoelliptic equations}, Mem. Am. Math. Soc. 1357 (2022).
\bibitem{MR1312710} G. \textsc{Lebeau}, L. \textsc{Robbiano}, \textit{Contr\^ole exact de l'\'equation de la chaleur}, Comm. Partial Differential Equations 20 (1995), no. 1-2, pp. 335-356.
\bibitem{MR2599384} N. \textsc{Lerner}, \textit{Metrics on the phase space and non-selfadjoint pseudo-differential operators}, Pseudo-Differential Operators, Theory and Applications, Vol. 3, Birkh\"auser Verlag, Basel (2010).
\bibitem{L} P. \textsc{Lissy}, \textit{A non-controllability result for the half-heat equation on the whole line based on the prolate spheroidal wave functions and its application to the Grushin equation}, preprint (2020), \href{https://hal.archives-ouvertes.fr/hal-02420212v2}{hal-02420212}.
\bibitem{LWXY} H. \textsc{Liu}, G. \textsc{Wang}, Y. \textsc{Xu}, H. \textsc{Yu}, \textit{Characterizations on complete stabilizability}, SIAM J. Control Optim. 60 (2022), no. 4, pp. 2040-2069.
\bibitem{M} J. \textsc{Martin}, \textit{Uncertainty principles in Gelfand-Shilov spaces and null-controllability}, J. Funct. Anal. 283 (2022), no. 8, 109619.
\bibitem{MPS2} J. \textsc{Martin}, K. \textsc{Pravda-Starov}, \textit{Geometric conditions for the exact controllability of fractional free and harmonic Schrödinger equations}, J. Evol. Equ. 21 (2021), pp. 1059-1087.
\bibitem{MR2679651} L. \textsc{Miller}, \textit{A direct Lebeau-Robbiano strategy for the observability of heat-like semigroups}, Discrete Contin. Dyn. Syst. Ser. B 14 (2010), no. 4, pp. 1465-1485.
\bibitem{NTTV} I. \textsc{Naki\'c}, M. \textsc{T\"aufer}, M. \textsc{Tautenhan}, I. \textsc{Veseli\'c}, \textit{Sharp estimates and homogenization of the control cost of the heat equation on large domains}, ESAIM, Control Optim. Calc. Var. 26 (2020), 26 pp. 
\bibitem{NSV} F. \textsc{Nazarov}, M. \textsc{Sodin}, A. \textsc{Volberg}, \textit{Lower bounds for quasianalytic functions, I. How to control smooth functions ?}, Math. Scand. 95 (2004), no. 1, pp. 59-79.
\bibitem{MR4041279} E. \textsc{Trélat}, G. \textsc{Wang}, Y. \textsc{Xu}, \textit{Characterization by observability inequalities of controllability and stabilization properties}, Pure Appl. Anal. 2 (2020), no. 1, pp. 93-122.
\bibitem{WZ} G. \textsc{Wang}, M. \textsc{Wang}, C. \textsc{Zhang}, Y. \textsc{Zhang}, \textit{Observable set, observability, interpolation inequality and spectral inequality for the heat equation in $\mathbb R^n$}, J. Math. Pures Appl. (9) 126 (2019), pp. 144-194.
\bibitem{Xiang} S. \textsc{Xiang}, \textit{Quantitative rapid and finite time stabilization of the heat equation}, preprint (2020), \href{https://arxiv.org/abs/2010.04696}{arXiv:2010.04696}.
\end{thebibliography}
\end{document}